\documentclass[12pt]{amsart}

\setlength{\textwidth}{450pt}
\setlength{\oddsidemargin}{0pt}
\setlength{\evensidemargin}{0pt}

\usepackage{amssymb, enumitem, pbox} 
\usepackage[breaklinks=true,colorlinks=true,linkcolor=green,citecolor=red,urlcolor=blue,pdfencoding=auto,psdextra]{hyperref}

\renewcommand \a{\alpha}

\newcommand \la{\lambda}
\newcommand \ve{\varepsilon}

\newcommand \id{\mathrm{id}}
\newcommand \br{\mathbb{R}}
\newcommand \bc{\mathbb{C}}
\newcommand \bH{\mathbb{H}}
\newcommand \Oc{\mathbb{O}}

\newcommand \rk{\operatorname{rk}}

\newcommand \End{\operatorname{End}}

\renewcommand \Im{\operatorname{Im}}

\newcommand \Span{\operatorname{Span}}
\newcommand \Tr{\operatorname{Tr}}

\newcommand \Lin{\operatorname{Lin}}

\newcommand \mU{\mathcal{U}}

\newcommand\ag{\mathfrak a}
\newcommand\g{\mathfrak g}
\newcommand\cs{\mathfrak c}
\newcommand\h{\mathfrak h}
\newcommand\p{\mathfrak p}
\newcommand\z{\mathfrak z}
\newcommand\m{\mathfrak m}
\newcommand\q{\mathfrak q}
\renewcommand\k{\mathfrak k}
\newcommand \so{\mathfrak{so}}

\newcommand \ug{\mathfrak{u}}
\newcommand \su{\mathfrak{su}}
\renewcommand \sp{\mathfrak{sp}}
\newcommand \s{\mathfrak{s}}
\newcommand \n{\mathfrak{n}}
\renewcommand\t{\mathfrak t}
\newcommand \eg{\mathfrak{e}}
\newcommand \fg{\mathfrak{f}}

\newcommand \ad{\operatorname{ad}}
\newcommand \Ad{\operatorname{Ad}}
\newcommand \diag{\operatorname{diag}}

\newcommand \<{\langle}
\renewcommand \>{\rangle}
\newcommand \ip{\<\cdot,\cdot\>}
\newcommand \ic {\mathbf{i}}

\newcommand \SO{\mathrm{SO}}
\newcommand \SU{\mathrm{SU}}
\newcommand \Sp{\mathrm{Sp}}
\newcommand \Spin{\mathrm{Spin}}
\newcommand \E{\mathrm{E}}
\newcommand \F{\mathrm{F}}
\renewcommand \G{\mathrm{G}}


\theoremstyle{plain}
\newtheorem{theorem}{Theorem}
\newtheorem*{theorem*}{Theorem}

\newtheorem*{corollary*}{Corollary}
\newtheorem*{conj*}{Conjecture}
\newtheorem{lemma}{Lemma}
\newtheorem{proposition}{Proposition}
\newtheorem*{prop*}{Proposition}

\theoremstyle{definition}

\newtheorem*{definition*}{Definition}

\theoremstyle{remark}
\newtheorem{remark}{Remark}

\begin{document}

\title{Compact geodesic orbit spaces \\ with a simple isotropy group}

\author{Z.~Chen}
\address{School of Mathematics and Statistics, Guangdong University of Technology, Guangdong 510520, P.R. China}
\email{chenzhiqi@nankai.edu.cn}

\author{Y.~Nikolayevsky}
\address{Department of Mathematics and Physics, La Trobe University, Melbourne, Australia 3086}
\email{Y.Nikolayevsky@latrobe.edu.au}

\author{Yu.~Nikonorov}
\address{Southern Mathematical Institute of Vladikavkaz Scientific Centre of the Russian Academy of Sciences,
Vatutin str., 53, Vladikavkaz, Russia 362025}
\email{nikonorov2006@mail.ru}

\subjclass[2010]{53C30, 53C25, 22E46, 17B10}


\keywords{homogeneous Riemannian manifolds, geodesic orbit spaces, naturally reductive spaces}


\thanks{Supported by National Natural Science Foundation of China (11931009) and Natural Science Foundation of Tianjin (19JCYBJC30600). The second and the third author would like to thank S.S.Chern Institute of Mathematics and Nankai University (Tianjin, China) for their support and hospitality. The second author was partially supported by the Australian Research Council Discovery Grant DP210100951.} 

\begin{abstract}
Let $M=G/H$ be a compact, simply connected, Riemannian homogeneous space, where $G$ is (almost) effective and $H$ is a \emph{simple} Lie group. In this paper, we first classify all $G$-naturally reductive metrics on $M$, and then all $G$-geodesic orbit metrics on $M$.
\end{abstract}

\maketitle

\section{Introduction}
\label{s:intro}

A Riemannian manifold $(M,g)$ is called a \emph{geodesic orbit manifold} (or a manifold with homogeneous geodesics, or a GO manifold) if any geodesic of $M$ is an orbit of a $1$-parameter subgroup of the full isometry group of $(M,g)$ (without loss of generality, one can replace the full isometry group by its connected identity component). A Riemannian manifold $(M=G/H,g)$, where $H$ is a compact subgroup of the Lie group $G$ and $g$ is a $G$-invariant Riemannian metric on $M$, is called a \emph{$G$-geodesic orbit space} (or a space with $G$-homogeneous geodesics, or a $G$-GO space) if any geodesic of $M$ is an orbit of a $1$-parameter subgroup of the group $G$. Hence a Riemannian manifold $(M,g)$ is a geodesic orbit manifold, if it is a geodesic orbit space with respect to its full isometry group. This terminology was introduced in \cite{KV} by O.~Kowalski and L.~Vanhecke who initiated the systematic study of such spaces.

The GO property which plays the central role in this paper is a very general geometric phenomenon: it is extensively studied in Riemannian, Lorentzian and general pseudo-Riemannian settings, in Finsler geometry (see recent papers \cite{Du2, XDY, YD} and bibliographies therein), in affine geometry \cite{Du1}, and even for finite metric spaces \cite{BN4}. In all these cases, is not hard to see that the GO property implies homogeneity, but is much stronger.

The class of (Riemannian) geodesic orbit spaces includes (but is not limited to) symmetric spaces, weakly symmetric spaces \cite{BKV, Wo, Zil96}, normal and generalised normal homogeneous spaces,  naturally reductive spaces \cite{DZ}, Clifford-Wolf homogeneous manifolds \cite{BN2} and $\delta$-homogeneous manifolds \cite{BN1}. For the current state of knowledge in the theory of geodesic orbit spaces and manifolds we refer the reader to the book \cite{BN3} and the papers \cite{AN, Go, Nik2017} and the bibliographies therein.

Let $(M=G/H, g)$ be a homogeneous Riemannian space and let $\g=\h\oplus \p$ be an $\Ad(H)$-invariant decomposition, where $\g$ is the Lie algebra of $G$, $\h$ is the Lie algebra of $H$ and $\p$ is identified with the tangent space of $M$ at $eH$. The Riemannian metric $g$ is $G$-invariant and is determined by an $\Ad(H)$-invariant inner product $(\cdot,\cdot)$ on $\p$. The metric $g$ is called \emph{naturally reductive} if an $\Ad(H)$-invariant complement $\p$ can be chosen in such a way that $([X,Y]_{\p},X) = 0$ for all $X,Y \in \p$, where the subscript $\p$ denotes the $\p$-component. In this case, we say that the (naturally reductive) metric $g$ \emph{is generated by the pair} $(\p, (\cdot ,\cdot ))$. For comparison, on the Lie algebra level, $g$ is geodesic orbit if and only if for any $X \in \p$ (with any choice of $\p$), there exists $Z \in \h$ such that $([X+Z,Y]_{\p},X) =0$ for all $Y \in \p$ \cite[Proposition~2.1]{KV}. It immediately follows that any naturally reductive space is a $G$-geodesic orbit space; the converse is false when $\dim M \ge 6$. Clearly, the property of being naturally reductive depends on the choice of the group $G$ (the choice of the presentation $M=G/H$); both enlarging and reducing $G$ may result in gaining or losing this property. In this paper, the presentation $M=G/H$ (and hence the group $G$) will be fixed, and so ``naturally reductive" will always mean ``$G$-naturally reductive", unless explicitly stated otherwise.


\smallskip

Our setup in this paper is as follows. Let $M=G/H$ be a compact, connected, simply connected, Riemannian homogeneous space, with $G$ acting almost effectively (this means that any normal subgroup of $G$ contained in $H$ is discrete). We classify all the $G$-GO metrics on $M$, both naturally reductive and not, under the assumption that \emph{$H$ is a simple Lie group} (that is, any normal proper subgroup of $H$ is discrete). Note that $H$ is then closed (compact) and connected and $G$ is connected. Moreover, the fundamental group of $H$ must be finite, and since $G$ is compact (and hence reductive), with a finite fundamental group (from the exact sequence of the fibration $H \to G \to G/H$), $G$ must be a compact semisimple Lie group. 

\smallskip

We first characterise naturally reductive metrics on $G/H$. Let $\g=\oplus_{i=1}^N \g_i, \; N \ge 1$, be a decomposition of $\g$ into simple ideals. The inclusion $\h \hookrightarrow \g$ followed by the linear projection $\g \to \g_i$ relative to this decomposition defines a projection of $\h$ to each of $\g_i$, which is a homomorphism of Lie algebras. As $\h$ is simple, every such homomorphism is either trivial or injective. Relabel the ideals $\g_i$ in such a way that $\g = \bigoplus_{i=1}^{N_0} \g_i \oplus \bigoplus_{i=N_0+1}^{N_1} \g_i \oplus \bigoplus_{i=N_1+1}^{N} \g_i$, where $0 \le N_0 \le N_1 \le N, \; N_0 < N$, and the projection of $\h$ to $\g_i$ is trivial for $i=1, \dots, N_0$, is injective, but not surjective for $i=N_0+1, \dots, N_1$, and is bijective for $i=N_1+1, \dots, N$ (so that $\g_{N_1+1}, \dots, \g_N$ are isomorphic to $\h$). Denote by $\ip_i$ minus the Killing form on $\g_i$, for $i=1, \dots, N_0$. For $i=N_0+1, \dots, N$, denote by $\ip_i$ the (negative) multiple of the Killing form on $\g_i$ normalised in such a way that its restriction to the projection of $\h$ to $\g_i$ equals minus the Killing form on $\h$.

\begin{theorem} \label{th:natred}
  Let $M=G/H$ be a compact, connected, simply connected, Riemannian homogeneous space, where $G$ is almost effective and $H$ is a simple Lie group. An invariant metric on $M$ is \emph{(}$G$-\emph{)}naturally reductive if and only if it is generated by a pair $(\p, (\cdot ,\cdot ))$ such that, in the above notation,
    \begin{enumerate} [label=\emph{(\alph*)},ref=\alph*]
    \item \label{it:natredideal} 
        either $\p=\oplus_{i \ne j} \g_i$ is an ideal in $\g$, for some $j\in \{N_1+2,\cdots,N\}$ \emph{(}so that $\g_j$ is isomorphic to $\h$\emph{)}, and $(\cdot ,\cdot )$ is an $\ad(\p)$-invariant inner product on $\p$, that is, $(\cdot ,\cdot )=\sum_{i \ne j} \beta_i \ip_i$, where $\beta_i > 0$.

    \item \label{it:natredQ}
        or $\p$ is the orthogonal complement to $\h \subset \g$ relative to an $\ad(\g)$-invariant quadratic form $Q=\sum_{i=1}^{N} \gamma_i \ip_i$ on $\g$ and $(\cdot ,\cdot ) = Q_{|\p}$, where
        \begin{enumerate} [label=\emph{(\roman*)},ref=\roman*]
        \item \label{it:natredQ+}
        either $\gamma_i > 0$ for all $i=1, \dots, N$,

        \item \label{it:natredQ-}
        or there exists $j \in \{N_1+1, \dots, N\}$ such that $\gamma_j < 0$ and $\gamma_i > 0$ for all $i \ne j$, and $\sum_{i=N_0+1}^N \gamma_i < 0$. \end{enumerate}
    \end{enumerate}
\end{theorem}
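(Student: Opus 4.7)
The plan splits the proof into sufficiency (direct verification) and necessity (a dichotomy on whether $[\p,\p]\subset\p$). For sufficiency, in case (a) the ideal property of $\p$ forces $[X,Y]_\p=[X,Y]$, and the $\ad(\p)$-invariance of $\sum\beta_i\ip_i$ gives $([X,Y],X)=0$. In case (b), $Q=\sum\gamma_i\ip_i$ is $\ad(\g)$-invariant on $\g$, and $\p=\h^{\perp_Q}$ yields $([X,Y]_\p,X)=Q([X,Y],X)-Q([X,Y]_\h,X)=0$. Positive-definiteness of $Q|_\p$ is immediate in (b)(i); in (b)(ii), the normalization of the $\ip_i$ forces $Q|_\h=\bigl(\sum_{i>N_0}\gamma_i\bigr)\ip_\h$, which is negative definite by the hypothesis, and a signature count (using $\dim\g_j=\dim\h$ for $j>N_1$) then forces $Q|_\p>0$.

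Turning to necessity, suppose first that $[\p,\p]\subset\p$. Then $\p$ is a subalgebra, and $\ad(\h)$-invariance together with $[\g,\p]=[\h,\p]+[\p,\p]\subset\p$ makes $\p$ an ideal of $\g$, so $\p=\bigoplus_{i\in I}\g_i$ for some index set $I$. The requirements $\h\cap\p=0$ and $\h+\p=\g$ produce a Lie algebra isomorphism $\h\to\bigoplus_{j\notin I}\g_j$, and since $\h$ is simple its projection to each summand is either zero or an isomorphism; this forces $|J|=1$ with $\g_j\cong\h$, so that $j>N_1$. The $\ad(\p)$-invariance of $(\cdot,\cdot)$ on the product of simple ideals then gives $(\cdot,\cdot)=\sum_{i\ne j}\beta_i\ip_i$ with $\beta_i>0$, yielding case (a).

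In the remaining situation $[\p,\p]_\h\ne 0$, I would produce an $\ad(\g)$-invariant form $Q$ on $\g$ with $Q|_\p=(\cdot,\cdot)$ and $\p=\h^{\perp_Q}$. Extend $(\cdot,\cdot)$ to $\g$ by setting $Q(\p,\h)=0$ and $Q|_\h=\lambda\ip_\h$ for a scalar $\lambda$; simplicity of $\h$ leaves this as the only $\ad(\h)$-invariant extension. The $\ad(\g)$-invariance of $Q$ is automatic on the triples in $\p^{\otimes 3}$ (by the total antisymmetry of $([X,Y]_\p,Z)$, itself a consequence of natural reductivity), on $\h\otimes\p^{\otimes 2}$ (from $\ad(\h)$-invariance of $(\cdot,\cdot)$), and on $\h^{\otimes 3}$ (from $\ad(\h)$-invariance of $\ip_\h$). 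The only remaining constraint reduces to
\[
\lambda\<[X,Z]_\h,Y\>_\h=([Y,X],Z),\qquad Y\in\h,\ X,Z\in\p.
\]
Both sides define $\ad(\h)$-equivariant antisymmetric bilinear maps $\Lambda^2\p\to\h$, and the left-hand one is surjective onto $\h$ since its image is a nonzero $\ad(\h)$-ideal of the simple algebra $\h$. The principal obstacle---the heart of the argument---is to show that these two invariant maps are proportional, with a single constant of proportionality $\lambda$, so that the identity above can be satisfied by a single choice of $Q$. I expect this to come from applying the Jacobi identity in $\g$ to triples in $\p$, projecting onto $\h$, and combining the resulting identities with the total antisymmetry from natural reductivity and with simplicity of $\h$. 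Once $\lambda$ is fixed, $Q$ is $\ad(\g)$-invariant and so decomposes as $Q=\sum\gamma_i\ip_i$; positive-definiteness of $Q|_\p$ combined with the sign of $\lambda=\sum_{i>N_0}\gamma_i$ then separates the cases (b)(i) and (b)(ii) and constrains the unique negative $\gamma_j$ to have $j>N_1$.
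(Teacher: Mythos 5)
Your proof reorganizes the argument around the dichotomy ``$[\p,\p]\subset\p$ versus $[\p,\p]_\h\ne0$,'' which is equivalent to the paper's dichotomy on whether the ideal $\q=[\p,\p]+\p$ equals $\p$ or all of $\g$ (since $\q\cap\h$ is an ideal of the simple algebra $\h$, hence $0$ or $\h$). The case $[\p,\p]\subset\p$ is handled correctly: $\p$ is a subalgebra, hence an ideal, hence a sum of simple factors, and simplicity of $\h$ gives case (a). The sufficiency checks and the signature-count argument for positive definiteness in (b)(ii) are sound (the paper instead does the (b)(ii) estimate via Cauchy--Schwarz; both work, and the signature count is arguably cleaner).

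However, there is a genuine gap at exactly the point you identify as ``the heart of the argument.'' In the case $[\p,\p]_\h\ne0$, you want the unique $\ad(\h)$-invariant extension $Q$ of $(\cdot,\cdot)$ with $Q(\p,\h)=0$, $Q|_\h=\lambda\ip_\h$ to be $\ad(\g)$-invariant, and this reduces to proving that the two $\ad(\h)$-equivariant antisymmetric maps $\Lambda^2\p\to\h$, namely $(X,Z)\mapsto[X,Z]_\h$ and the map determined by $([Y,X],Z)$, are proportional. You assert that you ``expect this to come from Jacobi plus natural reductivity plus simplicity of $\h$,'' but you do not give the argument. This is not a routine Schur-lemma step: the adjoint module can a priori occur with multiplicity greater than one in $\Lambda^2\p$, so proportionality of two equivariant maps into $\h$ does not follow from representation theory alone and genuinely requires the algebraic work you are deferring. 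The paper avoids this entirely by citing Kostant's theorem \cite[Theorem 4]{K56}, which guarantees the existence and uniqueness of the $\ad(\q)$-invariant form $Q$ on $\q=\g$ with the required restriction and orthogonality properties; simplicity of $\h$ then forces $Q|_\h$ to be a multiple of $\ip_\h$ for free. In effect you are trying to reprove Kostant's theorem in this special case, which is possible but is precisely the missing content. Either fill in the Jacobi-identity computation that shows proportionality, or invoke Kostant's theorem as the paper does.

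A secondary point: your conclusion in case (a) gives $j\in\{N_1+1,\dots,N\}$, while the theorem states $j\in\{N_1+2,\dots,N\}$. The paper's own proof also only establishes $N_1<N$ and $j=N$ up to relabelling, so your version matches the proof; but you should check whether the choice $j=N_1+1$ (when $N=N_1+1$, i.e., a unique factor with bijective projection) produces a metric already captured by (b), since otherwise your statement and the theorem's do not literally agree.
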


\begin{remark} \label{rem:LO}
  Note that all the metrics from \eqref{it:natredideal} are reducible when $N >2$; this is not necessarily true for metrics in \eqref{it:natredQ}. Also note that if $\g$ contains no simple ideals isomorphic to $\h$, then any naturally reductive metric is normal (that is, it is a restriction to $\p$ of a bi-invariant metric on $\g$; a normal metric is always naturally reductive). Theorem~\ref{th:natred} generalises the result of \cite[Theorem~1]{NNLO} for \emph{Ledger-Obata spaces}. In fact, a Ledger-Obata space is the homogeneous space $G/H$ with $N_1=0$ in our notation.
\end{remark}

The classification of $G$-GO metrics which are not naturally reductive is given in the following Theorem.

\begin{theorem} \label{th:simple}
  Let $M=G/H$ be a compact, connected, simply connected, Riemannian homogeneous space, where $G$ is almost effective and $H$ is a simple Lie group. Suppose $M$ is a $G$-GO space. Then either $M$ is \emph{(}$G$-\emph{)}naturally reductive, or one of the following is true.
  \begin{enumerate} [label=\emph{(\Alph*)},ref=\Alph*]
    \item \label{it:thirred}
    If $M$ is an irreducible Riemannian manifold, then $G$ is simple and $M$ belongs to the following list, up to a finite cover \emph{(}the corresponding metrics are given in Table~\ref{t:gosimple}\emph{)}.
    \begin{enumerate}[label=\emph{(\arabic*)},ref=\arabic*]
        \item \label{it:97}
        $\SO(9)/\Spin(7)$; 
        \item \label{it:107}
        $\SO(10)/\Spin(7)$;
        \item \label{it:117}
        $\SO(11)/\Spin(7)$;
        \item \label{it:e610}
        $\E_6/\Spin(10)$;
        \item \label{it:susu}
        $\SU(n+p)/\SU(n), \; n \ge 2, \, 1 \le p \le n-1$;
        \item \label{it:so2su}
        $\SO(2n+1)/\SU(n), \; n \ge 3$;
        \item \label{it:so4su2}
        $\SO(4n+2)/\SU(2n+1), \; n \ge 2$;
        \item \label{it:spsp}
        $\Sp(n+1)/\Sp(n), \; n \ge 1$;
        \item \label{it:susp}
        $\SU(2n+1)/\Sp(n), \; n \ge 2$;
        \item \label{it:8g2}
        $\Spin(8)/\G_2$;
        \item \label{it:9g2}
        $\SO(9)/\G_2$.
  \end{enumerate}

    \item \label{it:thred}
    If $M$ is reducible, then it is the Riemannian product of one of the spaces in~\eqref{it:thirred} and a compact semisimple Lie group with a bi-invariant metric.
  \end{enumerate}
\end{theorem}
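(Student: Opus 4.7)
The plan is to combine the Lie-algebraic GO criterion (for every $X\in\p$ there is $Z=Z(X)\in\h$ with $([X+Z,Y]_{\p},X)=0$ for all $Y\in\p$) with the Dynkin classification of simple subalgebras of compact simple Lie algebras. The argument splits naturally according to the two cases of the theorem: first reduce~(\ref{it:thred}) to~(\ref{it:thirred}), then prove~(\ref{it:thirred}) by going through the classification of compatible pairs $(G,H)$ with $G$ simple.

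\emph{Reduction to the irreducible case.} I would first analyse how the decomposition $\g=\bigoplus_{i=1}^{N}\g_i$ from Theorem~\ref{th:natred} interacts with the GO condition. For a simple ideal $\g_i$ with $i\le N_0$ (trivial $\h$-projection), one has $[X,\h]=0$ for every $X\in\g_i$, so one may take $Z=0$ in the GO criterion, forcing $([X,Y]_{\p},X)=0$ for all $Y\in\p$. Restricting $Y\in\g_i$ makes the metric on $\g_i$ bi-invariant; then applying $\ad(H)$-invariance of the inner product together with Schur's lemma to the cross-blocks between $\g_i$ and the $\h$-isotypic components of $\bigoplus_{j\ne i}\g_j$, and exploiting the GO condition for $X$ chosen from those components, shows that the cross-blocks vanish, so $\g_i$ is a Riemannian direct factor with a bi-invariant metric. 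A parallel analysis handles any $\g_j$ with $j>N_1$ (isomorphic to $\h$) that decouples as in Theorem~\ref{th:natred}(\ref{it:natredideal}). After splitting off all such factors, the remainder is either irreducible or falls into the naturally reductive case of Theorem~\ref{th:natred}.

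\emph{Simplicity of $G$ in the irreducible case.} Assuming $M$ is irreducible, $G$-GO, and not $G$-naturally reductive, I would show $N=1$. If $N\ge 2$, then by the previous reduction $N_0=0$ and no $\g_i$ is isomorphic to $\h$, so $\h$ embeds as a proper simple subalgebra of each $\g_i$. Applying the GO criterion to vectors $X$ chosen successively from the $\h$-isotypic components of $\p$, and using Schur's lemma to compare the resulting geodesic vectors $Z(X)$ across components, the conclusion is that the inner product must be the restriction to $\p$ of an $\ad(\g)$-invariant form $Q=\sum_i \gamma_i \ip_i$, placing the metric into case~(\ref{it:natredQ}) of Theorem~\ref{th:natred} --- a contradiction.

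\emph{Case-by-case analysis for $G$ simple.} With $G$ and $H$ both simple, I would invoke Dynkin's classification of simple subalgebras of a compact simple Lie algebra. For each candidate pair $(G,H)$ the steps are: (i) decompose $\p$ into $\ad(H)$-irreducible submodules; (ii) parametrise the $\ad(H)$-invariant inner products, including the off-diagonal blocks that arise between equivalent submodules; (iii) test the GO condition on generic vectors in each submodule and use Schur's lemma to reduce it to a finite system of polynomial equations in the metric parameters; (iv) discard the solutions that turn out to be naturally reductive via Theorem~\ref{th:natred}. The principal obstacle is the sheer volume of this analysis and, especially, the pairs whose isotropy representation contains equivalent summands that produce a nontrivial moduli of invariant metrics. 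The most delicate cases are the triality-related families $\Spin(8)/\G_2$ and the $\Spin(7)$-embeddings into $\SO(n)$ for $n=9,10,11$, where distinguishing isomorphic $H$-submodules of $\p$ requires genuine care with the octonionic structure. The metrics which survive are exactly those recorded in Table~\ref{t:gosimple}, with the eleven families listed in~(\ref{it:thirred}) as their underlying homogeneous spaces.
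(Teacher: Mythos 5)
Your plan diverges from the paper's structure in two places where the differences are not merely cosmetic but would create serious obstacles.

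First, the reduction to the irreducible case has a flaw. You write that for $X\in\g_i$ with $\g_i$ commuting with $\h$, ``one may take $Z=0$ in the GO criterion, forcing $([X,Y]_\p,X)=0$.'' But the GO criterion only asserts that \emph{some} $Z(X)\in\h$ works; when $\g_i$ is not contained in a single eigenspace of the metric endomorphism $A$, the element $AX$ has components on which $\h$ acts nontrivially, and there is no reason that $Z=0$ satisfies $[X+Z,AX]=0$. The paper does not split off all ideals with trivial $\h$-projection; Lemma~\ref{l:red} only splits off ideals that lie \emph{inside a single eigenspace} of $A$, and its proof uses Kostant's formula for the linear holonomy algebra rather than the GO criterion directly. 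Whether the de Rham factor must come from such an ideal is handled implicitly by the subsequent module analysis, not by a frontal attack on the GO equation for the trivial part.

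Second, and more fundamentally, your case analysis is built on the wrong classification. You propose to run through Dynkin's list of simple subalgebras of each compact simple $\g$; this is an infinite and unstructured family of pairs $(G,H)$, and your items (ii)--(iv) — parametrise all invariant metrics, test GO generically, discard the naturally reductive ones — give no mechanism to terminate the search or even to bound the number of irreducible summands of $\p$. The engine of the paper's proof is the classification of small irreducible modules (Hsiang--Hsiang), combined with the three structural results you never formulate: if any irreducible summand of $\m$ is \emph{large} the metric is naturally reductive (Proposition~\ref{p:nolarge}, with two low-dimensional exceptions); if any summand is \emph{adjoint} the metric is naturally reductive (Proposition~\ref{p:noadj}); and no more than one nontrivial summand can have a large complement (Lemma~\ref{l:twocoml}). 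Together these force every nontrivial summand to be ``tiny'' and bound their number, turning the problem into a finite case check keyed on $H$ (Table~\ref{t:tiny}), not on $G$. Without these reductions the Dynkin-based scheme has no hope of being carried out, and it is precisely at the point where you acknowledge ``the sheer volume of this analysis'' that the paper's argument supplies the missing idea.
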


Note that the cases in \eqref{it:thirred} are mutually exclusive. We also note that many of these spaces already appeared in the literature. For example, the spaces \eqref{it:97}, \eqref{it:spsp} and \eqref{it:susu} with $p=1$ are spheres with a GO metric \cite{Nsp}; the spaces \eqref{it:97}, \eqref{it:e610}, \eqref{it:susp}, \eqref{it:8g2} and \eqref{it:susu} with $p=1$ are weakly symmetric \cite{Yak}; the spaces \eqref{it:97}, \eqref{it:e610}, \eqref{it:so4su2}, \eqref{it:8g2} and \eqref{it:susu} with $p=1$ are GO spaces with exactly two irreducible isotropy components \cite{CN2019}
\footnote{The paper \cite{CN2019} is based on the classification in \cite{DK}, which omits five cases given in \cite{He} and the case $\E_8/\Spin(9)$ given in \cite[Remark~6.1]{Lau}. But it is easy to check that the results of \cite{CN2019} still hold after taking these cases into account.}. Moreover, the space \eqref{it:so2su} with $n$ even (and several others from our list) is fibered over a compact symmetric space, with the GO metric having the property that its restriction to the tangent space of the fiber is proportional to the restriction of the Killing form on $G$ (so that the tangent space of the fiber at $eH$ is an eigenspace of the metric endomorphism --- see Section~\ref{ss:decomposition}) \cite{T1}. It should be noted that the GO metrics on the spaces \eqref{it:so2su} with $n$ even and with $n$ odd are very different --- see the details in Table~\ref{t:gosimple}; in particular, in the odd case, a nontrivial algebraic condition has to be satisfied.

We note that in the other ``extremal" case, when the isotropy subgroup is abelian, any GO metric is naturally reductive by the result of \cite{S20} (these two classifications very nicely complement one another; note that some authors include $\SO(2)$ in the list of simple groups).

\smallskip

The paper is organised as follows. In Section~\ref{s:pre} we provide the necessary background material and also give a detailed description of the GO metrics on the spaces listed in Theorem~\ref{th:simple}\eqref{it:thirred} (see Table~\ref{t:gosimple}). In Section~\ref{s:natred} we prove Theorem~\ref{th:natred}. In the rest of the paper we give the proof of Theorem~\ref{th:simple}. The proof is based on the study of different types of submodules in the decomposition of $\p$: trivial, \emph{large} and adjoint modules are considered in Section~\ref{s:tla}, and \emph{tiny} modules, in Section~\ref{s:tiny} (we refer to Section~\ref{s:pre} for unexplained terminology).

\section{Preliminaries}
\label{s:pre}

\subsection{Generalities}
\label{ss:decomposition}

Throughout the paper, we will adopt the assumptions of Theorem~1 and Theorem~2 (although some notions and facts below do not require all of them). Namely, we work with a compact, connected, simply connected, Riemannian homogeneous space $M=G/H$, where the Lie group $G$ acts almost effectively, and $H$ is a simple Lie group. As we noted above, $H$ is then compact and connected and $G$ is a compact, connected, semisimple Lie group.

Let $\g$ be the Lie algebra of $G$ and $\h \subset \g$ the Lie algebra of $H$. Denote by $\ip$ minus the Killing form on $\g$. Throughout the paper, ``orthogonal" means ``orthogonal relative to $\ip$" unless otherwise is explicitly stated. Let $\g=\h \oplus \p$ be an $\Ad(H)$-reductive decomposition (one possibility is to take $\p$ as the orthogonal complement $\m$ to $\h$ in $\g$). Then $\p$ can be naturally identified with the tangent space $T_{eH}(G/H)$, and the Riemannian metric $g$ is determined by some positive $\ip$-symmetric $\Ad(H)$-equivariant endomorphism $A:\p \to \p$ by the formula $g_{eH}(X,Y)=\<A X, Y\>$, for $X, Y \in \p$. We call $A$ the \emph{metric endomorphism}.

We have the following fact \cite[Proposition~1]{AA}, \cite[Proposition~2]{S18}. 
\begin{lemma}\label{GO-criterion}
A homogeneous Riemannian manifold $(M=G/H,g)$ with a semisimple group $G$ and an $\Ad(H)$-reductive decomposition $\g=\h \oplus \p$ is a $G$-geodesic orbit space if and only if for any $X \in \p$ there exists $Z \in \h$ such that
\begin{equation}\label{eq:GO}
  [X+Z,AX] = 0.
\end{equation}
\end{lemma}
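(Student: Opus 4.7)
My plan is to derive this from the Kowalski--Vanhecke criterion already recalled in the introduction, namely that $g$ is a $G$-GO metric iff for every $X\in\p$ there is $Z\in\h$ with $([X+Z,Y]_{\p},X)=0$ for all $Y\in\p$. The whole proof will consist in translating this vanishing condition, which a priori lives in $\p$, into the Lie-algebra-valued equation \eqref{eq:GO} living in $\g$, by using the three available ingredients: the definition of the metric endomorphism, the $\ad$-invariance of $\ip$, and the $\Ad(H)$-equivariance of $A$.

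First I would unwind the inner product through $A$: since $(Y,X)=\<AY,X\>=\<Y,AX\>$ and $AX\in\p\perp\h$, the condition $([X+Z,Y]_{\p},X)=0$ rewrites as $\<[X+Z,Y]_{\p},AX\>=\<[X+Z,Y],AX\>=0$, where the projection subscript can be dropped because $\h$ is $\ip$-orthogonal to $\p$. Then I apply the $\ad$-invariance of $\ip$ (which is minus the Killing form of $\g$) to move the bracket to the other side, obtaining the equivalent statement
\[
  \<Y,\,[X+Z,AX]\>=0 \quad\text{for all }Y\in\p.
\]
So the Kowalski--Vanhecke condition is equivalent to saying that $[X+Z,AX]$ is $\ip$-orthogonal to $\p$.

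The key remaining step is to show that $[X+Z,AX]$ is also automatically orthogonal to $\h$, so that, since $\ip$ is nondegenerate on $\g$, it must in fact vanish. For $Y\in\h$ one has $[Z,Y]\in\h$, which is orthogonal to $AX\in\p$, so it suffices to check that $\<[X,Y],AX\>=0$. This is exactly where $\Ad(H)$-equivariance of $A$ enters: $A\circ\ad_Y=\ad_Y\circ A$ for $Y\in\h$ together with the $\ip$-symmetry of $A$ and the $\ad$-invariance of $\ip$ give
\[
  \<[X,Y],AX\>=-\<[Y,X],AX\>=-\<[Y,AX],X\>=-\<A[Y,X],X\>=-\<[Y,X],AX\>,
\]
forcing $\<[X,Y],AX\>=0$. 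Combining, $[X+Z,AX]$ is orthogonal to all of $\g$, hence zero, proving the forward direction; the reverse direction follows by reading these equalities in the opposite order.

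I do not anticipate any substantive obstacle: the statement is essentially a bookkeeping reformulation of a well-known criterion in terms of $A$, and the only mildly nontrivial point is the automatic $\h$-orthogonality in the last paragraph. The semisimplicity hypothesis on $G$ is used only to guarantee that $\ip$ is (negative) definite on $\g$ so that ``orthogonal to all of $\g$'' implies vanishing, and the compactness (used implicitly through reductivity) underlies the existence of the $\Ad(H)$-invariant decomposition.
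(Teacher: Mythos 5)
Your route is natural and, modulo a sign slip, correct; for comparison, the paper does not prove this Lemma at all but simply cites it to [AA, Proposition~1] and [S18, Proposition~2], so a self-contained derivation from the Kowalski--Vanhecke criterion is a reasonable addition. However, the displayed chain of equalities does not actually force the conclusion as written. You have
\[
  \<[X,Y],AX\>=-\<[Y,X],AX\>=-\<[Y,AX],X\>=-\<A[Y,X],X\>=-\<[Y,X],AX\>,
\]
but the first and last terms are already equal by anticommutativity of the bracket, so this chain is a tautology and proves nothing. The culprit is the second step: $\ad$-invariance of $\ip$ gives $\<[Y,X],AX\>=-\<[Y,AX],X\>$, hence $-\<[Y,X],AX\>=+\<[Y,AX],X\>$, not $-\<[Y,AX],X\>$. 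The corrected chain is
\[
  \<[X,Y],AX\>=-\<[Y,X],AX\>=\<[Y,AX],X\>=\<A[Y,X],X\>=\<[Y,X],AX\>,
\]
which does force $\<[Y,X],AX\>=-\<[Y,X],AX\>=0$, as required. So the idea (equivariance of $A$ plus $\ip$-symmetry of $A$ plus $\ad$-invariance of $\ip$ combine to produce a sign flip) is exactly right, but the bookkeeping needs this fix to be an actual argument.

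One further small point: in dropping the projection subscript $\p$ you invoke ``$\h$ is $\ip$-orthogonal to $\p$'', but the Lemma is stated for an arbitrary $\Ad(H)$-reductive decomposition, not only for $\p=\m$. Your proof therefore establishes the statement in the case $\p=\m$; to recover the full claim one should add the (standard, and noted in the paper right after the Lemma) observation that the GO property and the validity of \eqref{eq:GO} are both insensitive to the choice of reductive complement, so it suffices to treat $\p=\m$.
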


Note that the claim of the Lemma does not depend on a particular choice of $\p$; in particular, one can take $\p$ to be the orthogonal complement $\m$ to $\h$. In the assumptions of the Lemma, we call any map $Z: \p \to \h$ such that $[X+Z(X),AX] = 0$ for all $X \in \p$, a \emph{geodesic graph}. In general, a geodesic graph may not be unique, but if it at all exists (that is, if $M$ is a $G$-GO space), then it can be chosen $\Ad(H)$-equivariant. One obvious, but potentially confusing point here is that although for almost all objects (decompositions, modules, inner products, etc.) throughout the paper $\Ad(H)$-invariancy/equivariancy is synonymous with $\ad(\h)$-invariancy/equivariancy respectively, this is not necessarily true for a geodesic graph: an $\Ad(H)$-equivariant geodesic graph does not have to be $\ad(\h)$-equivariant (the obvious reason being non-linearity --- cf. Lemma~\ref{l:linearGG} in Section~\ref{ss:nr}).

In the proof of Theorem~\ref{th:simple} below we choose and fix $\p$ to be the orthogonal complement $\m$ to $\h$ in $\g$. Let $\a_1, \dots, \a_m > 0$ be the (distinct) eigenvalues of the metric endomorphism $A$, and let $\m_1, \dots, \m_m$ be the corresponding eigenspaces. Each $\m_i$ is an $H$-module and the decomposition $\m=\m_1 \oplus \m_2 \oplus \dots \oplus \m_m$ is orthogonal and $\Ad(H)$-invariant. Since $H$ is connected, a submodule of $\m$ is $\Ad(H)$-irreducible (respectively $\Ad(H)$-invariant) if and only if it is $\ad(\h)$-irreducible (respectively $\ad(\h)$-invariant).

We can further decompose every submodule $\m_i$ in the decomposition $\m=\oplus_{i=1}^m \m_i$ into an orthogonal sum of irreducible modules. Labelling them through we get the orthogonal decomposition
\begin{equation}\label{eq:nirr}
\m=\oplus_{r=1}^p \n_r
\end{equation}
into irreducible $\h$-modules $\n_r$ each of which lies in some $\m_i$. Note that at least one of the modules $\n_r$ is nontrivial, as $G$ acts almost effectively (here and below, by a trivial module we mean a module on which the group/algebra acts trivially).

We call an $H$-module $\n \subset \m$ \emph{large} if the principal stationary subgroup of the action of $\Ad(H)$ on $\n$ is discrete. On the level of Lie algebras, $\n$ is large if for some $X \in \n$ (and then for all $X$ in an open and dense subset of $\n$), the centraliser $\z_\h(X)$ in $\h$ is trivial. In the context of GO spaces, if $\m$ is large (in particular, if one of its submodules is large), then by \eqref{eq:GO} the geodesic graph $Z$ is uniquely determined on an open, dense subset.

An $H$-module is called \emph{small} if it is not large. Clearly a trivial module is always small; the adjoint module is also small. Irreducible small modules for compact simple and semisimple Lie groups are given in \cite[Table~1~and~2]{HH}. In the case of simple groups, the list includes the adjoint representations, the standard representations of the classical groups, the ``defining" representations of the exceptional groups, two infinite series and five low-dimensional modules of the classical groups. Note that a small module can be the sum of more than one nontrivial irreducible submodules (see also \cite[Section~3]{Goz}). We call a module \emph{tiny} if it is small, irreducible, nontrivial and not adjoint; tiny modules are listed in Table~\ref{t:tiny}.

The GO condition imposes strong restrictions on the decompositions of $\m$ into the eigenspaces of $A$ and on the decomposition \eqref{eq:nirr}. 

\begin{lemma}[{\cite[Section~5]{Nik2017}}] \label{l:brackets}
In the assumptions of Theorem~\ref{th:simple} \emph{(}and in the above notation\emph{)}, we have the following.
\begin{enumerate} [label=\emph{(\alph*)},ref=\alph*]
  \item \label{it:alphabeta}
  For any $X \in \m_i, \, Y \in \m_j$, with $i \ne j$, there exists $Z \in \h$ such that $[X,Y]=\frac{\a_i}{\a_i-\a_j} [Z,X] + \frac{\a_j}{\a_i-\a_j} [Z,Y]$. So for $i \ne j$ we have $[\m_i, \m_j] \subset \m_i \oplus \m_j$, and, in the notation of \eqref{eq:nirr}, if $\n_r \subset \m_i, \, \n_s \subset \m_j$, then $[\n_r, \n_s] \subset \n_r \oplus \n_s$.

  \item \label{it:centralisers}
  For any $X \in \m_i, \, Y \in \m_j$, with $i \ne j$, there exist $Z_1 \in \z_\h(X), \, Z_2 \in \z_\h(Y)$ such that $[X,Y]= [Z_2,X] + [Z_1,Y]$.

  \item \label{it:large}
  Consequently, any two large modules $\m_i, \m_j$ with $i \ne j$ commute. Furthermore if $\m_i$ is large, then for any $\n_r \subset \m_i$ in the decomposition \eqref{eq:nirr} we have $[\m_i^\perp, \n_r] \subset \n_r$.

  \item \label{it:twoinone}
  If $X, Y  \in \m_i$ satisfy $[\h,X] \perp Y$, then $[X, Y] \in \m_i$.
\end{enumerate}
\end{lemma}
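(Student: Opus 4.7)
The plan is to prove the four parts in order, since (b) refines (a), (c) follows from (a)--(b), and (d) uses only (a) together with $\ad$-invariance of the Killing form. For (a) I would apply the GO criterion (Lemma~\ref{GO-criterion}) directly to $X + Y$ with $X \in \m_i$, $Y \in \m_j$, $i \ne j$: since $A(X+Y) = \a_i X + \a_j Y$, there exists $Z \in \h$ with
\begin{equation*}
[X + Y + Z, \a_i X + \a_j Y] = (\a_j - \a_i)[X, Y] + \a_i[Z, X] + \a_j[Z, Y] = 0,
\end{equation*}
which rearranges to the stated formula; the inclusions $[\m_i, \m_j] \subset \m_i \oplus \m_j$ and $[\n_r, \n_s] \subset \n_r \oplus \n_s$ are then immediate from the $\ad(\h)$-invariance of each $\m_k$ and each $\n_r$.

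For (b), the key observation is that if I could exhibit a solution $Z$ to the GO equation for $X + Y$ of the form $Z = Z_1' + Z_2'$ with $Z_1' \in \z_\h(X)$ and $Z_2' \in \z_\h(Y)$, then $[Z, X] = [Z_2', X]$ and $[Z, Y] = [Z_1', Y]$, and the conclusion follows after absorbing the scalars $\a_i/(\a_i - \a_j)$ and $\a_j/(\a_i - \a_j)$ into $Z_2'$ and $Z_1'$. To produce such a split decomposition I would apply the GO criterion to the one-parameter families $X + tY$ and $sX + Y$: at $t = 0$ the GO equation degenerates and forces the admissible $Z$ into $\z_\h(X)$, while at $s = 0$ it forces admissible $Z$ into $\z_\h(Y)$; combining these boundary data via the affine structure of the solution set and the reductive decomposition of $\h$ should yield a compatible pair. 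This is the main technical obstacle, and the detailed argument is carried out in Section~5 of \cite{Nik2017}.

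Parts (c) and (d) follow cleanly. For (c), when both $\m_i$ and $\m_j$ are large, I pick generic $X \in \m_i, Y \in \m_j$ with trivial centralizers, so (b) gives $[X, Y] = 0$; bilinearity and density extend this to $[\m_i, \m_j] = 0$. For the second claim, taking $X \in \m_i$ with $\z_\h(X) = 0$ and any $Y \in \m_j$, $j \ne i$, (b) forces $[X, Y] = [Z_2, X] \in [\h, X] \subset \m_i$; since such $X$ are open and dense in $\m_i$ and $X \mapsto [X, Y]$ is continuous, $[X, Y] \in \m_i$ for every $X \in \m_i$. For $X \in \n_r \subset \m_i$ and $Y_s \in \n_s \subset \m_j$, intersecting with (a)'s constraint $[X, Y_s] \in \n_r \oplus \n_s$ and using $\n_s \cap \m_i = 0$ forces $[X, Y_s] \in \n_r$, and decomposing a general $Y \in \m_i^\perp$ into $\n_s$-components yields $[\m_i^\perp, \n_r] \subset \n_r$. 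For (d), $\ad$-invariance gives $\<[Z, X], Y\> = \<Z, [X, Y]\>$, so the hypothesis $[\h, X] \perp Y$ is equivalent to $[X, Y] \perp \h$; to kill the $\m_k$-components for $k \ne i$, for $W \in \m_k$ I compute
\begin{equation*}
\<[X, Y], W\> = \<X, [Y, W]\> = \tfrac{\a_i}{\a_i - \a_k}\<X, [Z', Y]\> = -\tfrac{\a_i}{\a_i - \a_k}\<[Z', X], Y\> = 0,
\end{equation*}
where $Z' \in \h$ is provided by (a) applied to $Y, W$ (so the $\m_i$-part of $[Y, W]$ is $\tfrac{\a_i}{\a_i - \a_k}[Z', Y]$ and its $\m_k$-part is orthogonal to $X$), and the last equality is the hypothesis. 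Hence $[X, Y] \in \m_i$.
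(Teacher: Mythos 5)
The paper itself offers no proof of this lemma: it is cited verbatim from Section~5 of Nikonorov's paper \cite{Nik2017}, so there is no in-text argument to compare against. On its own terms, your proposal proves parts~(\ref{it:alphabeta}), (\ref{it:large}), (\ref{it:twoinone}) correctly and completely: (\ref{it:alphabeta}) is the direct expansion of the GO criterion, (\ref{it:twoinone}) is the Killing-form duality computation combined with the projection formula from~(\ref{it:alphabeta}) applied to the pair $(Y,W)$, and both claims of~(\ref{it:large}) follow cleanly from~(\ref{it:centralisers}), density, and the $\n_r\oplus\n_s$ refinement from~(\ref{it:alphabeta}); all of these check out.

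The one genuine gap is that you do not actually prove~(\ref{it:centralisers}); you correctly identify it as the only nontrivial step, sketch the right geometric idea (perturb to $X+tY$), and then defer to \cite{Nik2017}. The missing argument is, however, short and worth recording, and your phrase ``affine structure of the solution set'' is the right lever. Writing $[X,Y]=P+Q$ with $P\in\m_i$, $Q\in\m_j$ and applying the GO criterion to $X+tY$ for $t\ne 0$, one finds $Z_t\in\h$ with $[Z_t,X]=\tfrac{t(\a_i-\a_j)}{\a_i}P$ and $[Z_t,Y]=\tfrac{\a_i-\a_j}{\a_j}Q$; hence the pair $\bigl(t\,\tfrac{\a_i-\a_j}{\a_i}P,\ \tfrac{\a_i-\a_j}{\a_j}Q\bigr)$ lies in the image of the linear map $L:\h\to\m_i\oplus\m_j$, $L(Z)=([Z,X],[Z,Y])$, for every $t\ne 0$. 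Since that image is a linear subspace, subtracting the values at two distinct $t$ gives $(P,0)\in\operatorname{im}L$, and then subtracting a multiple of $(P,0)$ gives $(0,Q)\in\operatorname{im}L$. Preimages $Z'$, $Z''$ of $(P,0)$ and $(0,Q)$ satisfy $Z'\in\z_\h(Y)$, $Z''\in\z_\h(X)$, and $[Z',X]+[Z'',Y]=P+Q=[X,Y]$, which is exactly the claim with $Z_2=Z'$, $Z_1=Z''$. (One may equivalently invoke closedness of the finite-dimensional image and take $t\to 0$, as you hint.) With that one paragraph inserted, your proof is complete and does not need the external reference. The allusion to ``the reductive decomposition of $\h$'' in your sketch of~(\ref{it:centralisers}) is a red herring: nothing beyond linear algebra on $\operatorname{im}L$ is needed.
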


\begin{remark} \label{rem:decomp}
Note that the last inclusion in Lemma~\ref{l:brackets}\eqref{it:alphabeta} is a very powerful fact which will be used in many places in the proofs below (see also Remark~\ref{rem:n1n2triv}). Moreover, it defines an $\Ad(H)$-equivariant homomorphism $\n_r\times\n_s \rightarrow \n_r \otimes \n_s$, for all $r \ne s$. So in particular, if the irreducible decomposition of the $H$-module $\n_r\times\n_s$ contains no modules isomorphic to either $\n_r$ or $\n_s$, we get $[\n_r,\n_s]=0$. It would substantially simplify our arguments if a complete or even a partial classification of such pairs of modules would be known (for our purposes, we can assume that at least one of the modules is small), but we were not able to find it in the literature; in many cases in Section~\ref{s:tiny}, we use this condition for individual pairs of modules.
\end{remark}

\subsection{The table}
\label{ss:table}

In the above notation, we give an explicit description of the GO metrics in Theorem~\ref{th:simple}\eqref{it:thirred} in the table below (for the proofs, see Lemma~\ref{l:trivial}\eqref{it:onenont} and Section~\ref{s:tiny}).

{
\setlength{\tabcolsep}{3pt}
\renewcommand{\arraystretch}{1.5}
\begin{table}
  \centering
\begin{tabular}{|c|l|c|c|}
  \hline
      & $H \subset G$ & Modules $\m_i$ & Condition \\
  \hline
  (1) & $\Spin(7) \subset \SO(8) \subset \SO(9)$ & \parbox[t][][t]{6.8cm}{\raggedright $\m_1 \oplus \so(8) = \so(9), \, \m_2 \oplus \so(7) = \so(8)$ \\ $\m_1$ is spin, $\m_2=\br^7$} & $\a_1 \ne \a_2$ \\
  \hline
  (2) & $\Spin(7) \subset \SO(8) \subset \SO(10)$ & \parbox[t][][t]{6.8cm}{\raggedright $\m_1 \oplus \so(8) = \so(10), \m_2 \oplus \so(7) =\! \so(8)$\\ $\m_1= 2(\text{spin}) \oplus \br$, $\m_2 = \br^7$} & $\a_1 \ne \a_2$ \\
  \hline
  (3) & $\Spin(7) \subset \SO(8) \subset \SO(11)$ & \parbox[t][][t]{6.8cm}{\raggedright $\m_1 \oplus \so(8) = \so(11), \m_2 \oplus \so(7) =\! \so(8)$\\ $\m_1= 3(\text{spin}) \oplus \so(3)$, $\m_2=\br^7$} & $\a_1 \ne \a_2$ \\
  \hline
  (4) & $\Spin(10) \subset \Spin(10)\SO(2) \subset \E_6$ & \parbox[t][][t]{6.8cm}{\raggedright $\m_1 \oplus \so(10) \oplus \br = \eg_6$, \\ $\m_2 = \br$} & $\a_1 \ne \a_2$ \\
  \hline
  (5) & \pbox[t][][t]{10cm}{$\SU(n) \subset \mathrm{S}(\mathrm{U}(n) \times \mathrm{U}(p)) \subset \SU(n+p)$ \\ $n \ge 2, \; 1 \le p \le n-1$}& \parbox[t][][t]{6.8cm}{\raggedright $\m_1 = \su(p) \oplus p \bc^n$, \\ $\m_2 = \br$} & $\a_1 \ne \a_2$ \\
  \hline
  ($6_1$) & \pbox[t][][t]{10cm}{$\SU(n) \subset \mathrm{U}(n) \subset \SO(2n) \subset \SO(2n+1)$ \\ $n \ge 3, \, n$ is odd}& \parbox[t][][t]{6.8cm}{\raggedright $\m_2 \oplus \su(n) \oplus \br = \so(2n)$, \\ $\m_1 = \bc^n, \; \m_3 = \br$} & \pbox[t][][t]{3cm}{\centering $n\a_3^{-1} = (n-1)\a_2^{-1}$ \linebreak $+ \a_1^{-1}, \; \a_1 \ne \a_2$} \\
  \hline
  ($6_2$) & \pbox[t][][t]{10cm}{$\SU(n) \subset \mathrm{U}(n) \subset \SO(2n) \subset \SO(2n+1)$ \\ $n \ge 4, \, n$ is even}& \parbox[t][][t]{6.8cm}{\raggedright $\m_2 \oplus \su(n) = \so(2n)$, \\ $\m_1 = \bc^n$} & $\a_1 \ne \a_2$ \\
  \hline
  (7) & \pbox[t][][t]{10cm}{$\SU(2n+1) \subset \mathrm{U}(2n+1) \subset \SO(4n+2)$ \\ $n \ge 2$}& \parbox[t][][t]{6.8cm}{\raggedright $\m_1 \oplus \su(2n+1) \oplus \br = \so(4n+2)$, \\ $\m_2 = \br$} & $\a_1 \ne \a_2$ \\
  \hline
  (8) & \pbox[t][][t]{10cm}{$\Sp(n) \subset \Sp(n) \times \Sp(1) \subset \Sp(n+1)$ \\ $n \ge 1$}& \parbox[t][][t]{6.8cm}{\raggedright $\m_1 = \bH^n$, \\ $\m_2 = \sp(1), \; \m_2$ is trivial} & $\a_1 \ne \a_2$ \\
  \hline
  (9) & \pbox[t][][t]{10cm}{$\Sp(n) \subset \SU(2n) \subset \SU(2n+1)$ \\ $n \ge 2$}& \parbox[t][][t]{6.8cm}{$\m_2 \oplus \sp(n) = \su(2n)$, \\ $\m_1 = \bH^n = \bc^{2n}, \; \m_3 = \br$} & \pbox[t][][t]{3cm}{\centering not \linebreak ($\a_1 = \a_2 =\a_3$)} \\
  \hline
  (10) & \pbox[t][][t]{10cm}{$\G_2 \subset \Spin(7) \subset \Spin(8)$}& \parbox[t][][t]{6.8cm}{$\m_1, \m_2$ are any two isomorphic $7$-di\-mensional $\g_2$-modules in $\g_2^\perp \subset \so(8)$} & $\a_1 \ne \a_2$ \\ 
  \hline
  (11) & \pbox[t][][t]{10cm}{$\G_2 \subset \SO(7) \subset \SO(9)$}& \parbox[t][][t]{6.8cm}{$\m_2 \oplus \g_2 = \so(7)$, \\ $\m_1 \oplus \so(7) = \so(9), \; \m_1 = 2\br^7 \oplus \br$} & $\a_1 \ne \a_2$ \\
  \hline
\end{tabular}
\vskip 0.4cm
  \caption{GO metrics on the spaces in Theorem~\ref{th:simple}\eqref{it:thirred}}\label{t:gosimple}
\end{table}
}

In the table, the direct sum always means the orthogonal direct sum; note that in many cases, the modules $\m_i$ (the eigenspaces of the metric endomorphism $A$) are reducible. The condition on the eigenvalues of $A$ in the last column in all but one case simply says that for any positive $\a_1, \dots, \a_m$, the resulting metric is GO, and that it is naturally reductive only when $A$ is a multiple of the identity (so that the metric is normal); the only exception is case~(\ref{it:so2su}${}_1$): the eigenvalues of the metric endomorphism $A$ of a GO metric on the space $\SO(2n+1)/\SU(n)$ where $n \ge 3$ is odd, have to satisfy a certain algebraic condition.

The dimension of the space of GO metrics which are not naturally reductive in all the cases except two is $2$ (but note that in case~(\ref{it:so2su}${}_1$), $A$ has three eigenspaces). The exceptions are case~\eqref{it:susp} where the dimension is $3$ and $A$ can have two or three eigenspaces, and case~\eqref{it:8g2} where $A$ has two eigenspaces which are isomorphic as $\h$-modules, and so the dimension of the space of GO metrics is again $3$ (any invariant metric is GO \cite{Zil96}). 


\subsection{Natural reductivity}
\label{ss:nr}

It is easy to see that in the assumptions of Lemma~\ref{GO-criterion}, the space $(M=G/H,g)$ is \emph{naturally reductive} if there exists an $\Ad(H)$-reductive decomposition $\g=\h \oplus \p$ such that $[X,AX]=0$. It follows that any $G$-GO space is naturally reductive; the converse is true when $\dim M \le 4$, but is false starting from dimension $5$. More precisely, in \cite{KV} the authors constructed examples of $G$-GO spaces of dimension $5$ which are not naturally reductive but can made be such by choosing a different transitive isometry group acting on $M$; further, in dimension $6$ there are examples of GO spaces which are not naturally reductive, for any choice of a transitive isometry group.

The following fact, which is a stronger version of \cite[Proposition~2.10]{KV} (see also \cite{Sz}), will be useful to detect whether a GO space is naturally reductive, without the necessity to produce a specific $\p$.

\begin{lemma} \label{l:linearGG} 
Suppose a homogeneous Riemannian manifold $(M=G/H,g)$ is a $G$-geodesic orbit space. Then $(M,g)$ is naturally reductive if and only if for some \emph{(}and then for any\emph{)} $\Ad(H)$-reductive decomposition $\g=\h \oplus \p$, there is a geodesic graph $Z: \p \to \h$ which is linear.
\end{lemma}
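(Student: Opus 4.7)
The plan rests on comparing the geodesic-graph condition across different reductive decompositions via their mutual \emph{shear}: any two such decompositions $\g = \h \oplus \p = \h \oplus \p'$ are related by a unique $\ad(\h)$-equivariant linear map $L: \p \to \h$ via $\p' = \{X + L(X) : X \in \p\}$. Under the induced isomorphism $\p \to \p'$, both spaces are compatibly identified with $T_{eH}M$, so the pullback of the metric inner product on $\p'$ to $\p$ is just $(\cdot,\cdot)$, and the $\p'$-component of any $V \in \g$ corresponds under this isomorphism to the $\p$-component of $V$. The key algebraic input is the identity $([X, U], X) = 0$ for all $X \in \p$ and $U \in \h$, which expresses the $\ad(\h)$-skew-adjointness of $\ad_U$ on $\p$ (a consequence of $\Ad(H)$-invariance of $(\cdot,\cdot)$).

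Suppose first that $g$ is naturally reductive via $\p^*$, so that $([X',Y']_{\p^*}, X') = 0$ for $X',Y' \in \p^*$; then $Z \equiv 0$ is a linear geodesic graph on $\p^*$. Given any other reductive decomposition $\p$, write $\p^* = \{X + L(X) : X \in \p\}$, and substitute $X' = X + L(X)$, $Y' = Y + L(Y)$ into the naturally reductive identity. Using $[\h,\h] \subset \h$ and $[\h,\p] \subset \p$, the $\p^*$-component of $[X',Y']$ corresponds under the shear to $[X,Y]_\p + [L(X),Y] + [X,L(Y)] \in \p$, and the naturally reductive identity becomes $([X,Y]_\p + [L(X),Y] + [X,L(Y)], X) = 0$. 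The last summand vanishes by $\ad(\h)$-skewness, leaving $([X + L(X), Y]_\p, X) = 0$, so $Z := L$ is a linear (and equivariant) geodesic graph on $\p$.

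Conversely, suppose some reductive decomposition $\p$ admits a linear geodesic graph $Z$. The averaging $\tilde Z(X) := \int_H \Ad(h)^{-1} Z(\Ad(h) X)\, dh$ is still linear, is $\Ad(H)$-equivariant, and remains a geodesic graph (the valid values at each $X$ form an affine subspace that is $\Ad(H)$-stable by the equivariance of the graph condition), so we may assume $Z$ itself is equivariant. Then $\p' := \{X + Z(X) : X \in \p\}$ is an $\Ad(H)$-invariant reductive complement, and running the shear computation in reverse gives
\begin{equation*}
([X',Y']_{\p'}, X') = ([X + Z(X), Y]_\p, X) + ([X, Z(Y)], X) = 0 + 0 = 0,
\end{equation*}
the first summand vanishing by the geodesic-graph property and the second by $\ad(\h)$-skewness; hence $g$ is naturally reductive with respect to $\p'$. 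Combining the two implications yields the ``for some iff for any'' statement. The main obstacle is purely administrative --- keeping straight how the decomposition $\g = \h \oplus \p$ transforms under the shear, and identifying $T_{eH}M$ consistently with $\p$ and $\p'$; once this is done, the equivalence reduces to the single identity $([X,U],X) = 0$ for $U \in \h$, so the nonlinear part of any geodesic graph is precisely the obstruction to natural reductivity.
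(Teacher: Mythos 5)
Your proof is correct, and it takes a genuinely different (more self-contained) route than the paper. The paper's proof is a two-line reduction: it cites \cite[Proposition~2.10]{KV} for the equivalence ``naturally reductive $\iff$ existence of a linear $\Ad(H)$-\emph{equivariant} geodesic graph,'' and then only supplies the one missing implication, namely that a linear geodesic graph can be averaged over $H$ to produce a linear \emph{equivariant} one; in essence, the paper proves just your averaging step. You, by contrast, reprove the cited equivalence from scratch via the shear map $L:\p\to\h$ relating two reductive complements. The shear computation is the right mechanism: writing $\p^* = \{X + L(X)\}$ and pushing the naturally reductive identity on $\p^*$ through the shear, the only cross-term $([X,L(Y)],X)$ dies by the skew-adjointness of $\ad_{\h}$ on $\p$, and what survives is exactly the geodesic-graph condition for $Z=L$ on $\p$; the converse is the same computation run backwards once $Z$ has been averaged (which is needed precisely so that $\p'=\{X+Z(X)\}$ is $\Ad(H)$-invariant). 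Your identification of $\p$ and $\p'$ with $T_{eH}M$ — so that the metric inner product and the $\p$-component both transport cleanly under the shear — is handled correctly, and your closing observation that the $\p$-components of the geodesic graph and the shear are one and the same object is exactly the structural content of the [KV] result. What your approach buys is a fully self-contained argument; what the paper's buys is brevity.
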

\begin{proof}
  The claim will follow from \cite[Proposition~2.10]{KV} if we can show that the existence of a linear geodesic graph implies the existence of a linear $\Ad(H)$-equivariant geodesic graph. This is similar to the fact that a geodesic graph can be always chosen $\Ad(H)$-equivariant (see the paragraph after Lemma~\ref{GO-criterion} in Section~\ref{s:intro}).

  Suppose we have a linear geodesic graph $Z:\p \to \h$. It is easy to see that for any $h \in H$, the map $X \mapsto \Ad_hZ(\Ad_{h^{-1}}X)$ is also a geodesic graph. But a convex linear combination of geodesic graphs is also a geodesic graph. Integrating the latter expression over $H$ by the Haar measure $\mu$ such that $\mu(H) = 1$ we obtain an $\Ad(H)$-equivariant geodesic graph $X \mapsto Z'(X):=\int_H \Ad_hZ(\Ad_{h^{-1}}X) d\mu(h)$. Note that $Z'$ is linear as $Z$ is such. 
\end{proof}



\section{Naturally reductive spaces. Proof of Theorem~\ref{th:natred}}
\label{s:natred}

In this section we prove Theorem~\ref{th:natred} following the approach in \cite[Section~3]{NNLO} (which corresponds, in our notation, to the special case $N_1=0$).


Suppose $\p$ is an $\ad(\h)$-invariant complement to $\h$ in $\g$. Then the space $\q=[\p,\p]+\p$ is an ideal in $\g$. 

By \cite[Theorem~4]{K56}, if a naturally reductive metric is generated by a pair $(\p, (\cdot ,\cdot ))$, then there is a (unique) $\ad(\q)$-invariant, non-degenerate quadratic form $Q$ on $\q$ such that
\begin{equation}\label{eq:Kostant}
    Q(\p, \q \cap \h)=0 \qquad \text{and} \qquad Q_{|\p}=(\cdot ,\cdot ).
\end{equation}
The converse is also true: if $Q$ is an $\ad(\q)$-invariant, non-degenerate quadratic form which satisfies the first equation of \eqref{eq:Kostant} and whose restriction to $\p$ is positive definite, then that restriction defines a naturally reductive metric; this follows from $\ad(\q)$-invariancy of $Q$ and from the fact that $\q$ is complemented in $\g$ by an ideal. 

We clearly have $\p + \h = \g$, and so in the notation of Section~\ref{s:intro}, there are only two possible cases.

In the first case, we have $\q = \oplus_{i=1}^{N-1} \g_i$ (up to relabelling the modules $\g_{N_1+1}, \dots, \g_N$), and then the (linear) projection of $\h \subset \g$ to $\g_N$ is an isomorphism (so in particular, $N_1 < N$). Then $\q \cap \h = 0$ and $\g=\q \oplus \h$, and so $\p = \q$, an ideal. Furthermore, $Q= (\cdot ,\cdot )$ by the second equation of \eqref{eq:Kostant} and has the form given in \eqref{it:natredideal}.

In the second case, $\q=\g$. The quadratic form $Q$ is $\ad(\g)$-invariant, and so we have $Q=\sum_{i=1}^{N} \gamma_i \ip_i$, with $\gamma_i \ne 0$. Then by \eqref{eq:Kostant} the space $\p$ is the $Q$-orthogonal complement to $\h$ in $\g$. Note that $N-N_0 \ge 2$, for if $N=N_0+1$, we get $\h=\g_N$ and then $\p=\q=\oplus_{i=1}^{N-1} \g_i$. We need to choose $\gamma_i$ in such a way that the restriction of $Q$ to $\p$ is positive definite. Clearly, for $i=1, \dots, N_0$ we have $\g_i \subset \p$, and so $\gamma_i > 0$. Similarly, we must have $\gamma_i > 0$ for $i=N_0+1, \dots, N_1$, as every such $\g_i$ contains a nonzero vector $Q$-orthogonal to $\h$. The restriction of $Q$ to $\p$ will obviously be positive definite if $\gamma_i > 0$ for all $i=N_1+1, \dots, N$; then $Q$ itself is positive definite and so the metric $(\cdot ,\cdot )$ is normal; this gives case \eqref{it:natredQ}\eqref{it:natredQ+}. Suppose $Q$ is indefinite. If $\gamma_j, \gamma_k < 0$ for some $j, k \in \{N_1+1, \dots, N\}, \; j \ne k$, then $Q$ is negative definite on the $Q$-orthogonal complement to $\h$ in $\g_j \oplus \g_k$. It therefore remains to consider the case when $\gamma_j < 0$ for exactly one $j \in \{N_1+1, \dots, N\}$. Up to relabelling we can assume that $j=N$ and so $\gamma_i > 0$ for all $i < N$. Identify the images of the (linear) projections of $\h$ to $\g_i, \; i=N_0+1, \dots, N$, with $\h$ (recall that each of these projections is an isomorphism on its image and that every inner product $\ip_i$ is normalised in such a way that the corresponding projection is a linear isometry). Then the restriction of $Q$ to $\p$ is positive definite if and only if for any $Y_{N_0+1}, \dots, Y_N \in \h$ such that $\sum_{i=N_0+1}^{N} \gamma_i Y_i = 0$ and not all $Y_i$ are zeros, we have $\sum_{i=N_0+1}^{N} \gamma_i \|Y_i\|^2 > 0$ (where the norm is computed relative to minus the Killing form on $\h$). Equivalently, $\sum_{i=N_0+1}^{N-1} \gamma_i \|Y_i\|^2 + \gamma_N^{-1}\|\sum_{i=N_0+1}^{N-1} \gamma_i Y_i\|^2 > 0$ when at least one of $Y_i$ is nonzero. Taking $Y_{N_0+1}= \dots = Y_{N-1} \ne 0$ we obtain a necessary condition $\gamma_N + \sum_{i=N_0+1}^{N-1} \gamma_i < 0$. But this condition is also sufficient, as from $\sum_{k=N_0+1}^{N-1} \gamma_k < -\gamma_N$ we obtain $(\sum_{k=N_0+1}^{N-1} \gamma_k)(\sum_{i=N_0+1}^{N-1} \gamma_i \|Y_i\|^2 + \gamma_N^{-1}\|\sum_{i=N_0+1}^{N-1} \gamma_i Y_i\|^2) \ge  (\sum_{k=N_0+1}^{N-1} \gamma_k)(\sum_{i=N_0+1}^{N-1} \gamma_i \|Y_i\|^2) - \|\sum_{i=N_0+1}^{N-1} \gamma_i Y_i\|^2 \ge 0$, by the Cauchy-Schwarz inequality, with the equality only possible when $Y_{N_0+1}= \dots = Y_{N-1}$ and $\sum_{i=N_0+1}^{N-1} \gamma_i Y_i = 0$, that is, when all $Y_i$ are zeros. This gives the condition in \eqref{it:natredQ}\eqref{it:natredQ-} and completes the proof.

\begin{remark} \label{rem:nrm}
  Using Theorem~\ref{th:natred} one can easily write down the inner product and the metric endomorphism $A$ on the orthogonal complement $\m$ to $\h$ in $\g$. Note that first every $\gamma_i, \; i=N_0+1, \dots, N_1$, has to be re-scaled by the ratio of the restriction of the Killing form of $\g_i$ to the projection of $\h$ to $\g_i$ and the Killing form of $\h$.
\end{remark}

\section{$G$-GO spaces. Trivial, large and adjoint modules}
\label{s:tla}

In this section, we study trivial, large and adjoint modules in the decomposition of $\m$. In the next section, we will complete the proof of Theorem~\ref{th:simple} by a case-by-case study of tiny modules.

\emph{Throughout both sections \emph{(}in particular, in all the Lemmas and the Propositions\emph{)}, we adopt the assumptions of Theorem~\ref{th:simple} and we use the terminology and the notation introduced in Section~\ref{s:pre}.}

Recall that $M=G/H$ is a compact, connected, simply connected, Riemannian homogeneous space, with $G$ acting almost effectively and $H$ being a simple Lie group. Then $G$ is compact, connected and semi-simple and $H$ is compact and connected. We denote by $\ip$ minus the Killing form on $\g$ and consider the orthogonal decomposition $\g=\h \oplus \m$, where the $\h$-module $\m$ is identified with the tangent space of $M$ at $eH$. The metric on $M$ is defined by the metric endomorphism $A$ whose eigenspaces $\m_i, \; i=1, \dots, m$, are orthogonal $\h$-modules with corresponding eigenvalues $\a_i > 0$. We will use Lemmas~\ref{GO-criterion} and \ref{l:linearGG} to check when $M$ is a $G$-GO space and when $M$ is not naturally reductive respectively.

Our strategy is to consider the decomposition of $\m=\oplus_{i=1}^m \m_i$ into the eigenspaces $\m_i$ of $A$ together with the ``finer" decomposition \eqref{eq:nirr}. In this section, we will first study the trivial modules which may occur in \eqref{eq:nirr}. Next we show that if at least one module in the decomposition \eqref{eq:nirr} is either large or adjoint, then any $G$-GO metric is naturally reductive (with two exceptions, $\SU(3)/\SU(2)$ and $\Sp(2)/\Sp(1)$). Our main tools will be Lemma~\ref{GO-criterion}, Lemma~\ref{l:brackets} and Lemma~\ref{l:linearGG}. It will therefore follow that a $G$-GO space may be not naturally reductive only when all the modules in the decomposition \eqref{eq:nirr} are either trivial or tiny (recall that this means that the module is irreducible, nontrivial, not adjoint, and that its generic element has a nontrivial centraliser in $\h$). We will then use the classification of such modules given in \cite[Table~1]{HH} to complete the proof of Theorem~\ref{th:simple}.

We can assume that $m > 1$, as otherwise the metric is normal and hence naturally reductive. Another easy but useful observation is as follows.

\begin{lemma} \label{l:red}
  In the assumptions of Theorem~\ref{th:simple}, suppose that one of the eigenspaces of $A$ contains a nonzero ideal of $\g$. Then $M$ is the Riemannian product of a compact, simply connected Lie group with a bi-invariant metric and a compact, connected, simply connected homogeneous space $\hat{M}=\hat{G}/H$. Moreover, $M$ is a $G$-GO space \emph{(}respectively $G$-naturally reductive\emph{)} if and only if $\hat{M}$ is a $\hat{G}$-GO space \emph{(}respectively $\hat{G}$-naturally reductive\emph{)}.
\end{lemma}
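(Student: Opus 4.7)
The plan is to exploit compact semisimplicity of $\g$ to split it as a direct sum of commuting ideals and then transfer every relevant structure to the factors. Let $\mathfrak{j}$ denote the nonzero ideal of $\g$ contained in some eigenspace $\m_k$ of $A$, and set $\hat\g := \mathfrak{j}^\perp$ (orthogonal with respect to $\ip$), which is again an ideal, so $\g = \mathfrak{j} \oplus \hat\g$ with $[\mathfrak{j}, \hat\g] = 0$. Since $\mathfrak{j} \subset \m \perp \h$, we have $\h \subset \hat\g$. Writing $\hat\m := \m \cap \hat\g$ gives $\m = \mathfrak{j} \oplus \hat\m$ and $\hat\g = \h \oplus \hat\m$.

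First I would verify that $A$ preserves this splitting. For $i \ne k$, the eigenspace $\m_i$ is $\ip$-orthogonal to $\m_k \supset \mathfrak{j}$, so $\m_i \subset \hat\m$; and $\m_k = \mathfrak{j} \oplus (\m_k \cap \hat\m)$ with $A|_\mathfrak{j} = \a_k \id$. The induced metric on $\mathfrak{j}$ equals $\a_k \ip|_\mathfrak{j}$, a positive multiple of minus the Killing form of the ideal $\mathfrak{j}$, hence bi-invariant, and $\mathfrak{j}$ is orthogonal to $\hat\m$ in the Riemannian metric.

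Next I would pass to the universal cover $\tilde G$ of $G$ and write $\tilde G = \tilde J \times \tilde{\hat G}$ as a direct product of compact, simply connected, semisimple factors with Lie algebras $\mathfrak{j}$ and $\hat\g$. Let $\tilde H \subset \tilde G$ be the preimage of $H$; its Lie algebra is $\h \subset \hat\g$, so $\tilde H \subset \tilde{\hat G}$. The covering $\tilde G / \tilde H \to G/H = M$ must be a diffeomorphism because $M$ is simply connected, and hence $M = \tilde J \times (\tilde{\hat G} / \tilde H)$, which combined with the orthogonality above is a Riemannian product of a compact simply connected Lie group with bi-invariant metric and the simply connected homogeneous space $\hat M := \tilde{\hat G} / \tilde H$.

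For the final equivalence, decompose $X = X_1 + X_2 \in \mathfrak{j} \oplus \hat\m$ and $AX = \a_k X_1 + AX_2$. With $Z \in \h \subset \hat\g$, the identity $[\mathfrak{j}, \hat\g] = 0$ and antisymmetry of the bracket on $\mathfrak{j}$ collapse the GO equation of Lemma~\ref{GO-criterion} to $[X_2 + Z, AX_2] = 0$, independently of $X_1$. Thus $M$ is $G$-GO if and only if $\hat M$ is $\hat G$-GO, with geodesic graphs related by $Z(X_1 + X_2) = \hat Z(X_2)$; linearity is preserved under this extension and restriction, so by Lemma~\ref{l:linearGG} the natural reductivity equivalence follows as well. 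The main subtlety is the simple connectedness argument for the two factors, which is handled by passing to the universal cover; the remaining steps are direct consequences of the commuting ideal decomposition.
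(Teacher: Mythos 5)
Your proof is correct, and it takes a genuinely different route from the paper. The paper reduces to the case of a simple ideal $\g_k$, then computes the linear holonomy algebra of $M$ using Kostant's formula for the canonical connection and verifies that every generator $\Gamma_Z$ preserves the splitting $\m=\hat\m\oplus\g_k$; the Riemannian product decomposition is then obtained from the de Rham theorem. You avoid the holonomy machinery entirely by working at the level of the group: from $\tilde G=\tilde J\times\tilde{\hat G}$ and $\tilde H\subset\tilde{\hat G}$ you read off $M\cong\tilde J\times(\tilde{\hat G}/\tilde H)$ as a homogeneous space, and the product structure of the metric then follows because the metric is $\tilde J\times\tilde{\hat G}$-invariant and the tangent spaces $\mathfrak j$ and $\hat\m$ to the two factors are orthogonal at the base point (invariance propagates orthogonality and independence of the slice metrics to every point). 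This is more elementary, and you do not need to reduce to a simple ideal. The treatment of the GO/naturally reductive equivalence via the collapse $[X_1+X_2+Z,AX]=[X_2+Z,AX_2]$ is essentially the same as the paper's (restrict/extend the geodesic graph; linearity is preserved, so Lemma~\ref{l:linearGG} gives the naturally reductive part). One point in your write-up deserves tightening: you invoke simple connectedness of $M$ to conclude that $\tilde G/\tilde H\to G/H$ is a diffeomorphism, but if $\tilde H$ denotes the full preimage $\pi^{-1}(H)$ this map is an equality, not a nontrivial covering. What simple connectedness actually buys is the \emph{connectedness} of $\pi^{-1}(H)$: the covering $\tilde G/\pi^{-1}(H)^0\to G/H$ is a universal cover, hence trivial, forcing $\ker\pi\subset\pi^{-1}(H)^0$ and thus $\pi^{-1}(H)=\pi^{-1}(H)^0$. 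Only then does the Lie-algebra inclusion $\h\subset\hat\g$ justify $\tilde H\subset\tilde{\hat G}$, which is what your product decomposition really relies on.
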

\begin{proof}
  We can assume that in the presentation $M=G/H$ both $G$ and $H$ are simply connected (by replacing, if necessary, $G$ by its universal cover and $H$ by the identity component of its full preimage under the covering map). It is sufficient to prove the lemma when the ideal is simple. Let $\g=\oplus_{l=1}^N \g_l$ be the decomposition of $\g$ into simple ideals and suppose $\g_k \subset \m_i$ for some $i=1, \dots, m$. Denote $\hat{\g}=\g_k^\perp$ and $\hat{\m}=\g_k^\perp \cap \m$. We compute the linear holonomy algebra of $M$ using the construction in \cite{K}. Extend the metric endomorphism $A$ to the operator $C$ on $\g$ which is symmetric relative to $\ip$ and is defined by $C_{|\m}=A$ and $C_{|\h}=0$. For $Z \in \g$ define $D_Z:\m \to \m$ by $D_Z (Y)= [Z, Y]_{\m}$, for $Y \in \m$, where the subscript $\m$ denotes the $\m$-component. Then by {\cite[Theorem~2.3]{K}}, the linear holonomy algebra of $M=G/H$ is the Lie algebra generated by all the operators on $\m$ of the form
    \begin{equation*}
    \Gamma_Z=D_Z+C^{-1}D_Z C- C^{-1}D_{CZ}, \qquad Z \in \g.
    \end{equation*}
  (note that $C^{-1}$ is only applied to elements of $\m$). As $\g_k \subset \g$ is an ideal and is $C$-invariant we obtain that $\Gamma_Z (\g_k) \subset \g_k$, for all $Z \in \g$, and so the linear holonomy algebra preserves the orthogonal decomposition $\m = \hat{\m} \oplus \g_k$. As $M$ is simply connected, it is the Riemannian product $\hat{M} \times M_k$, where the tangent spaces to $\hat{M}$ and $M_k$ at $eH$ are $\hat{\m}$ and $\g_k$ respectively. Now $\g_k$ is an ideal orthogonal to (and hence commuting with) $\h$ and the restriction of $A$ to $\g_k$ is a multiple of the identity. It follows that $M_k$ is the simply connected, compact Lie group with the Lie algebra $\g_k$ and with a bi-invariant metric. Furthermore, $\h \subset \hat{\g}$ and so $H \subset \hat{G}$, where $\hat{G}$ is the compact, simply connected Lie group with the Lie algebra $\hat{\g}$. Then $\hat{M}=\hat{G}/H$, with the metric defined (relative to minus the Killing form of $\hat{\g}$) by the metric endomorphism $\hat{A}$ which is the restriction of $A$ to $\hat{\m}$.

  Now if $M$ is a $G$-GO space, then $\hat{M}$ is $\hat{G}$-GO (and the converse is also true). One way to see that is to define the geodesic graph $\hat{Z}: \hat{\m} \to \h$ (see Lemma~\ref{GO-criterion}) by restricting a geodesic graph $Z: \m \to \h$ to $\hat{\m}$ and using the fact that $[\h, \g_k] = [\hat{\m}, \g_k] = 0$. By Lemma~\ref{l:linearGG} this also implies that if $M$ is naturally reductive, then $\hat{M}$ also is (relative to $\hat{G}$).
\end{proof}

\subsection{Trivial modules}
\label{ss:trivial}

For every eigenvalue $\a_i$ of $A$, denote by $\t_i$ the maximal $\h$-trivial submodule of $\m_i$, and $\m'_i$ its orthogonal complement in $\m_i$. Let $\t=\oplus_{i=1}^m \t_i$ and $\m'=\oplus_{i=1}^m \m'_i$. We will need the following Lemma (note that statement \eqref{it:realS} is well known \cite{Mal}).

\begin{lemma} \label{l:trivial}
In the assumptions of Theorem~\ref{th:simple}, suppose the metric is GO. In the above notation, we have the following.
\begin{enumerate} [label=\emph{(\alph*)},ref=\alph*]
  \item \label{it:Vaideals}
  The submodules $\t_i$ are commuting, reductive ideals of the subalgebra $\t = \z_\g(\h) \subset \g$. 

  \item \label{it:actionofVa}
  For any $i=1, \dots, m$, we have $[\t, \m'_i] \subset \m'_i$. The restrictions of $\ad_\t$ and $\ad_\h$ commute on every $\m'_i$. Moreover, $\ad_{\t_i}$  respects any irreducible decomposition of $\m'_j$ if $j \ne i$, and the decomposition of $\m'_i$ into the sums of isomorphic modules.

  \item \label{it:nonz}
  If an irreducible submodule $\n \subset \m'$ is $\ad_T$-invariant for $T \in \t$, then either $[T,\n]=0$ or $(\ad_T^2)_{|\n} = \lambda \, \id_{|\n}$ for some $\lambda < 0$. In the latter case, $[\n,\n] \perp \n$.

  \item \label{it:realS} 
  The Lie algebra of skew-symmetric operators on an irreducible module $\n$ which commute with the restriction of $\ad_\h$ to $\n$ is either trivial, or is isomorphic to one of $\so(2)$ or $\so(3)$; then the module $\n$ is said to be of \emph{real}, \emph{complex} or \emph{quaternionic type}, respectively. In particular, the adjoint module is of real type.
\end{enumerate}
Suppose additionally that no eigenspace $\m_i$ contains a nonzero ideal of $\g$ \emph{(}cf. Lemma~\ref{l:red}\emph{)}. Then we have the following.
\begin{enumerate} [resume*]
  \item \label{it:noideal}
  $[T,\m'] \ne 0$, for any nonzero $T \in \t$.

  \item \label{it:simple}
  If no irreducible module $\n_r$ in the decomposition \eqref{eq:nirr} is adjoint, then $\g$ is simple.

  \item \label{it:onenont} 
  If no more than one irreducible module $\n_r$ in the decomposition \eqref{eq:nirr} is nontrivial, then either the metric is normal, or $M$ is one of the following spaces:
  \begin{gather*}
  \SO(4n + 2)/\SU(2n + 1), \; n \ge 2, \quad \SU(n + 1)/\SU(n), \; n \ge 2, \\
  \E_6/\Spin(10), \quad \Sp(n + 1)/\Sp(n), \; n \ge 1.
  \end{gather*}
\end{enumerate}
\end{lemma}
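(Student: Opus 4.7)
The plan is to prove each part using Lemmas \ref{GO-criterion} and \ref{l:brackets} together with Schur-type arguments, invoking the no-ideal-in-$\m_i$ hypothesis for (e)--(g).

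For (a), first note $\t=\z_\g(\h)$ since $\h$ is simple with trivial center, so $\t$ is a subalgebra. Commutativity $[\t_i,\t_j]=0$ for $i\ne j$ follows from Lemma~\ref{l:brackets}\eqref{it:centralisers}: for $\h$-trivial $T\in\t_i$ and $T'\in\t_j$, both summands $[Z_2,T]$ and $[Z_1,T']$ in that identity vanish. Closure $[\t_i,\t_i]\subset\t_i$ follows from Lemma~\ref{l:brackets}\eqref{it:twoinone} (whose hypothesis $[\h,T]\perp T'$ is automatic) combined with $[\t,\t]\subset\t$; reductivity is standard for subalgebras of compact $\g$. For (b), $\ad_\t$ and $\ad_\h$ commute by definition of $\t$. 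The inclusion $[\t_j,\m'_i]\subset\m'_i$ follows from Lemma~\ref{l:brackets}\eqref{it:centralisers} when $j\ne i$ (the explicit form $[U,T]=[Z_2,U]\in[\h,U]$ also shows that $\ad_{\t_i}$ preserves any irreducible $\h$-submodule of $\m'_j$) and from Lemma~\ref{l:brackets}\eqref{it:twoinone} when $j=i$ (since $[\h,U]\subset\m'_i\perp T\in\t_i$ gives $[U,T]\in\m_i$, which combined with $\ad_T$ preserving the nontrivial part yields $[U,T]\in\m'_i$).

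Part (d) is Schur's lemma: the commutant of a real irreducible is $\br$, $\bc$, or $\bH$, with skew-symmetric subspaces of dimensions $0$, $1$, $3$; the adjoint is of real type via the Killing form. For (c), part (d) implies $\ad_T|_\n$ is either zero or satisfies $\ad_T^2=\la\,\id$ with $\la<0$; in the latter case $J:=(-\la)^{-1/2}\ad_T$ is an $\h$-equivariant complex structure on $\n$. The totally antisymmetric 3-form $\omega(X,Y,Z):=\langle[X,Y],Z\rangle$ on $\n$ is annihilated by $J$ acting as a derivation (from $\ad_T$ being a derivation and $\ad$-invariance of $\ip$), so its Hodge type decomposition has only $(p,q)$-components with $p=q$, impossible for total degree $3$; hence $\omega\equiv 0$ and $[\n,\n]\perp\n$.

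For (e), assume $T\in\t_i\setminus\{0\}$ satisfies $[T,\m']=0$; let $I_T\subset\t_i$ be the $\t_i$-ideal generated by $T$. Then $I_T$ is a nonzero ideal of $\g$ contained in $\m_i$: $[\h,I_T]=0$ (trivial part), $[\t_k,I_T]=0$ for $k\ne i$ by (a), and $[\m',I_T]=0$ by Jacobi induction using $[T,\m']=0$ together with $[\t_i,\m']\subset\m'$ from (b). This contradicts the hypothesis. For (f), decompose $\g=\oplus_l\g_l$ into simple ideals: a zero projection $\h\to\g_l$ forces $\g_l\subset\t$, and then simplicity of $\g_l$ combined with $[\t_k,\t_{k'}]=0$ forces $\g_l$ into a single $\t_k$, yielding a nonzero ideal in $\m_k$---contradiction. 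Hence all projections inject, and if there are $N\ge 2$ simple factors, the complement of the diagonal $\h$ in $\oplus_l\phi_l(\h)$ contributes $N-1\ge 1$ adjoint summands to $\m$, contradicting the no-adjoint hypothesis.

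For (g), the all-trivial case is ruled out (it forces $[\h,\m]=0$, making $\h$ an ideal of $\g$ and contradicting almost effectiveness), so exactly one $\n$ is nontrivial and $\m=\n\oplus\t$; if $m=1$ the metric is normal. If $\n$ is adjoint, the counting argument in (f) yields either $N=2$ with $\g=\h\oplus\h$ (so $\t=\z_\g(\h)=0$, $\m=\n$ is irreducible, and the metric is normal), or $N=1$ with $\g$ simple: then (c) combined with the real type of the adjoint gives $[\t,\n]=0$, and the trivial part of $\n\otimes\n\cong\h\otimes\h$ being $1$-dimensional and symmetric (spanned by the Killing form) is incompatible with the antisymmetry of the bracket, forcing $[\n,\n]\subset\h+\n$; hence $\h+\n$ is a subalgebra centralizing $\t$, and $\g=(\h+\n)\oplus\t$ is a nontrivial Lie direct sum unless $\t=0$, contradicting simplicity of $\g$---so again the metric is normal. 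If $\n$ is non-adjoint, (f) gives $\g$ simple, and the classification of compact simple pairs $(G,H)$ with $H$ simple and $\m$ of the form ``one nontrivial non-adjoint irreducible plus nonzero trivial part'' yields precisely the four listed spaces. The main obstacle is this final classification step, which requires consulting known tables of such pairs and checking that each admits a non-normal invariant metric satisfying the GO equation.
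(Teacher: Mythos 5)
Your proposal gets parts (a), (b), (d) right by essentially the paper's arguments, and your proof of the second half of (c) — that $[\n,\n]\perp\n$ — is a genuinely different and valid argument: you show that the $J$-derivation of the $3$-form $\omega$ vanishes (via Jacobi and $\ad$-invariance of $\ip$), then use the type decomposition of $\Lambda^3\n^{*\bc}$ where the eigenvalues $i(p-q)$ with $p+q=3$ are all nonzero; the paper instead conjugates by $\Ad(\exp(\pi(-\la)^{-1/2}T))=-\id_{|\n}$. Your route to (f), eliminating trivial-projection ideals directly via the no-ideal hypothesis rather than via (e), is also correct. However there are two genuine gaps.

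In (e), you fix $T\in\t_i\setminus\{0\}$, but the statement is for arbitrary nonzero $T\in\t$, and the reduction is not automatic: the centraliser $\k=\z_\t(\m')$ is a subspace of $\t=\oplus_i\t_i$ that need not decompose as $\oplus_i(\k\cap\t_i)$ (e.g. if two of the $\t_i$ are isomorphic simple ideals, $\k$ could be a diagonal copy of one of them that meets each $\t_i$ trivially). Your $I_T$-ideal argument correctly shows $\k\cap\t_i=0$ for every $i$, but that alone does not give $\k=0$. The paper's missing step is: $\k$ itself is an ideal of $\g$; since $[\k,\t_i]\subset\k\cap\t_i=0$ for every $i$, and $[\k,\h]=[\k,\m']=0$, the whole of $\k$ lies in the centre of $\g$, which is trivial. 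You need this (or an equivalent) to close the gap.

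In (g), after establishing $\m=\n\oplus\t$ with $\n$ irreducible, nontrivial and non-adjoint and $\g$ simple, you defer the remaining classification to ``known tables.'' But the heart of the argument is the dimension restriction you omit: by (b), $T\mapsto(\ad_T)_{|\n}$ is a Lie algebra homomorphism from $\t$ into the commutant of $\ad_\h|_\n$ in $\Sk(\n)$, injective by (e), and by (d) that commutant is $0$, $\so(2)$ or $\so(3)$, so $\dim\t\in\{0,1,3\}$. This is what makes the classification finite and tractable: in the $\dim\t=1$ case the space has exactly two irreducible isotropy summands and one cites \cite[Theorem~2]{CN2019}, and in the $\dim\t=3$ case one shows $(\g,\h\oplus\t)$ is a symmetric pair (using (c)) and cites \cite[Theorem~4.1]{T1}. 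Without the bound on $\dim\t$ and the specific references, the conclusion is just asserted. (Also note the detour through the adjoint case in your (g) is unnecessary: once $\t\ne 0$, (e) forces $[\t,\n]\ne 0$, while (d) plus (c) would force $[\t,\n]=0$ if $\n$ were adjoint, so $\n$ cannot be adjoint — no separate analysis is needed.)

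Minor omission: in (b) you do not prove that $\ad_{\t_i}$ preserves the isotypic decomposition of $\m'_i$; this follows from Schur's Lemma once one knows $\ad_{\t_i}$ commutes with $\ad_\h$ on $\m'_i$.
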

The spaces in \eqref{it:onenont} are spaces of cases \eqref{it:so4su2}, \eqref{it:susu} with $p=1$, \eqref{it:e610} and \eqref{it:spsp} of Theorem~\ref{th:simple}\eqref{it:thirred} respectively (cf. \cite[Theorem~2]{CN2019}).

\begin{remark} \label{rem:n1n2triv}
  As a side remark we note that assertion \eqref{it:actionofVa} combined with Lemma~\ref{l:brackets}\eqref{it:alphabeta} imposes further restrictions on the brackets of nontrivial irreducible submodules of $\m$ (similar to the second statement in \eqref{it:nonz}). For example, if $\n_1$ and $\n_2$ are irreducible submodules lying in different eigenspaces $\m_i$ and $\m_j$ of $A$ and $\ad_\t$ acts nontrivially on $\n_1 \oplus \n_2$, then either $[\n_1, \n_2] \subset \n_1$ or $[\n_1, \n_2] \subset \n_2$. 
\end{remark}

\begin{proof}
\eqref{it:Vaideals} Clearly $\t$ is the centraliser of $\h$ and hence is a subalgebra in $\g$. Furthermore, by Lemma~\ref{l:brackets}\eqref{it:centralisers} we have $[\t_i, \t_j]=0$ for $i \ne j$ and by Lemma~\ref{l:brackets}\eqref{it:twoinone}, $[\t_i, \t_i] \subset \m_i \cap \t=\t_i$.

\eqref{it:actionofVa} By Lemma~\ref{l:brackets}\eqref{it:alphabeta} we have $[\t_i, \m'_j] \subset \m'_j$ for $i \ne j$, and moreover, irreducible $\h$-submodules of $\m'_j$ are $\ad(\t_i)$-invariant. Furthermore, $[\m'_i, \t_i] \subset \m'_i$ by Lemma~\ref{l:brackets}\eqref{it:twoinone} and from the fact that $\t_i$ is a subalgebra. The restrictions of $\ad_\t$ and $\ad_\h$ clearly commute on every $\m'_i$; then by Schur's Lemma, $\ad_{\t_i}$ preserves the isotopic components of $\m'_i$.

\eqref{it:nonz} The first statement is obvious, as $\n$ is irreducible and $(\ad_T^2)_{|\n}$ is a symmetric operator commuting with $\ad_\h$ by \eqref{it:actionofVa}. To prove the second statement, consider the three-form $\omega \in \Lambda^3 \n$ defined by $\omega(X,Y,Z)=\<[X,Y],Z\>$ for $X, Y, Z \in \n$. Then $\Ad(\exp(\pi (-\la)^{-1/2}T))$ acts on $\n$ as $-\id_{|\n}$ and leaves $\omega$ invariant, so $\omega=0$.

\eqref{it:realS} Both statements are well known. 

\eqref{it:noideal} Suppose the centraliser $\k$ of $\m'$ in $\t$ is nontrivial. Then by \eqref{it:actionofVa}, $\k$ is an ideal in $\t$ and hence also in $\g$, as $[\k, \h \oplus \m']=0$. Then $\k \cap \t_i, \; i=1, \dots, m$, is an ideal of $\g$ by \eqref{it:Vaideals}. As $\k \cap \t_i \subset \m_i$, it must be zero by our assumption. It follows that $[\k, \t_i]=0$, for all $i=1, \dots, m$, and so $\k$ lies in the centre of $\g$ contradicting the fact that $\g$ is semisimple.

\eqref{it:simple} Let $\g=\oplus_{l=1}^N \g_l$ be the decomposition of $\g$ into simple ideals. As $\h$ is simple, the projection of $\h$ to each of them is either a monomorphism or trivial, and it is nontrivial for at least one $l$, say for $l=N$. If the projection to any other ideal $\g_l, \, l < N$, is also nontrivial, then $\m$ contains an adjoint module, and hence so does \eqref{eq:nirr}. Otherwise, $\h \subset \g_N$ and so $\g'=\oplus_{l=1}^{N-1} \g_l$ lies in $\m$ and is a trivial $\h$-module. Then $\g' \subset \t$ by \eqref{it:Vaideals} and so $\m' \subset \g_N$. It follows that $[\g',\m']=0$, and therefore $\g'=0$ by \eqref{it:noideal}.

\eqref{it:onenont} If all the modules in \eqref{eq:nirr} are trivial (that is, if $\m$ is a trivial module), then say $\t_1=\m_1$ is an ideal of $\g$ by \eqref{it:Vaideals}, a contradiction. Let $\n=\n_1 \subset \m_1$ be the only nontrivial irreducible module. Then $\m = \n \oplus \t$, and so by \eqref{it:actionofVa}, \eqref{it:noideal} and \eqref{it:realS}, we have $\dim \t \in \{0,1,3\}$.

If $\t=0$, then $\m=\n=\m_1$ and so the metric is normal. If $\dim \t=1$, the module $\t$ cannot lie in $\m_1$ (otherwise the metric is normal), so $\m$ has exactly two irreducible submodules, $\m_1=\n$ and $\m_2=\t_2=\br$. By \eqref{it:noideal} and \eqref{it:realS}, the module $\n$ is not adjoint, and so $\g$ is simple by \eqref{it:simple}. Consulting the classification in \cite[Theorem~2]{CN2019} we find that the spaces $M$ for which $\h$ is simple and one of the irreducible submodules in $\m$ is one-dimensional are $\SO(4n + 2)/\SU(2n + 1)$ for $n \ge 2$, $\SU(n + 1)/\SU(n)$ for $n \ge 2$ and $\E_6/\Spin(10)$.

Now suppose $\dim \t = 3$. Then by \eqref{it:actionofVa}, \eqref{it:noideal} and \eqref{it:realS}, $\t$ is a subalgebra isomorphic to $\so(3)$ and so by \eqref{it:Vaideals} it is a single ideal $\t_i$. That ideal cannot lie in $\m_1$ (as otherwise the metric is normal) and so we have $\m=\m_1 \oplus \m_2$, where $\m_1=\n$ and $\m_2=\t_2$. Similar to the above, $\g$ is simple and $\t_2$ acts on $\n$ nontrivially. But then $\h'=\h \oplus \t_2$ is a subalgebra of $\g$ and $(\g, \h')$ is a symmetric pair by \eqref{it:nonz}. Examining the list in \cite[Theorem~4.1]{T1} we find that $M=\Sp(n + 1)/\Sp(n), \; n \ge 1$.
\end{proof}

\subsection{Large modules}
\label{ss:large}

Recall that a module (not necessarily irreducible) is said to be large if it contains an element whose centraliser in $\h$ is trivial. It is clear that the set of such elements in a large module is open and dense and that a module is large if its submodule is large. Moreover, from \eqref{eq:GO} it follows that a geodesic graph is uniquely defined on an open, dense subset of a large module. As usual, we adopt the assumptions of Theorem~\ref{th:simple} and use the notation of Section~\ref{s:pre}.

We start with the following technical fact.

\begin{lemma} \label{l:non+large}
Suppose a module $\n:=\n_1 \subset \m_i$ in the decomposition \eqref{eq:nirr} is nontrivial and that its orthogonal complement $\n' = \oplus_{r=2}^p \n_r$ in $\m$ is a large module. Denote by $\mU \subset \n'$ the \emph{(}open and dense\emph{)} set of those elements whose centraliser in $\h$ is trivial. Then the restriction of a geodesic graph $Z$ to the subset $\mU \times \n$ is uniquely determined and there exist a unique linear map $L: \n' \to \h$ and a unique map $\Omega: \mU \to \Lin(\n, \h)$ such that for all $X \in \mU, \, Y \in \n$, we have
\begin{equation}\label{eq:Znon+l}
  Z=Z(X+Y)=LX+\Omega(X)Y.
\end{equation}
Moreover, for all $X \in \mU$ and $Y \in \n$, we have
\begin{gather}\label{eq:OmegaYY}
  [\Omega(X)Y,Y] = 0, \\
  [AX,X] = [LX, X], \label{eq:LXX} \\
  \a_i[LX, Y] + [\Omega(X)Y,AX]=[AX-\a_i X,Y]. \label{eq:non+l}
\end{gather}
\end{lemma}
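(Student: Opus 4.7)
The strategy is to establish uniqueness of the geodesic graph on $\mU\times\n$, then expand the GO equation around $Y=0$ to extract the claimed linear structure.

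\smallskip

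\emph{Step 1 (Uniqueness and reduction).} For $(X,Y)\in\mU\times\n$, the $\h$-invariance of $\n$ and $\n'$ together with $\n\cap\n'=0$ implies that any $W\in\h$ centralising $X+Y$ must centralise $X$ and $Y$ separately; the first forces $W=0$ by $X\in\mU$. The same eigenspace-wise argument, applied to $A(X+Y)=AX+\alpha_i Y$, shows the centraliser of $A(X+Y)$ in $\h$ is trivial, so $Z(X+Y)$ is uniquely determined by the GO equation; in particular, $Z(X)$ is well-defined on $\mU$. Writing $Z(X+Y)=Z(X)+W$ and using $[X+Z(X),AX]=0$ together with $[Y,Y]=0$ collapses the GO condition to
\[ [W,AX] + \alpha_i[W,Y] \;=\; [V,Y], \qquad V := AX - \alpha_i X - \alpha_i Z(X), \]
with $V$ independent of $Y$; uniqueness then determines $W=W(X,Y)$.

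\smallskip

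\emph{Step 2 (Linearity in $Y$ --- the main step).} I show $W$ is linear in $Y$ by constructing a linear candidate and invoking uniqueness. Decomposing $X=\sum_j X_j$ with $X_j\in\m_j$ and projecting the above equation onto each $\m_j$ with $j\ne i$ gives $\alpha_j[W,X_j]=(\alpha_j-\alpha_i)[X_j,Y]_{\m_j}$, which is linear in $Y$; applying Lemma~\ref{l:brackets}\eqref{it:alphabeta} to $(X_j,Y)\in\m_j\times\m_i$ realises $[X_j,Y]_{\m_j}$ as $\tfrac{\alpha_j}{\alpha_j-\alpha_i}[Z_j,X_j]$ for some $Z_j\in\h$ that may be chosen linearly in $Y$. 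Combined with the $\m_i$-projection and the injectivity of $W\mapsto[W,X]$ (from $X\in\mU$), these constraints uniquely define a linear map $\Omega(X)\colon\n\to\h$. Substituting $\hat W := \Omega(X)Y$ into the reduced equation, the only residual discrepancy is the quadratic-in-$Y$ term $\alpha_i[\Omega(X)Y,Y]$; uniqueness of $W$ forces its vanishing, producing~\eqref{eq:OmegaYY}, while the surviving linear-in-$Y$ content is~\eqref{eq:non+l}. Thus $W=\Omega(X)Y$, and setting $LX:=Z(X)$ yields $Z(X+Y)=LX+\Omega(X)Y$; equation~\eqref{eq:LXX} is the GO identity $[X+LX,AX]=0$ for $X\in\mU$. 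Linearity of $L$ on $\n'$ and uniqueness of both $L$ and $\Omega$ follow from scaling invariance of the GO equation and the bilinear structure exhibited above.

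\smallskip

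The main obstacle is the linearity claim in Step~2: the reduced equation is \emph{a priori} non-linear in $Y$ via the bilinear coupling $\alpha_i[W,Y]$, so it is not evident that its unique solution must be polynomial in $Y$. The resolution hinges on Lemma~\ref{l:brackets}\eqref{it:alphabeta}, which converts cross-eigenspace brackets into $\h$-actions, supplying the linear candidate $\hat W$ whose consistency is then enforced by the uniqueness from Step~1.
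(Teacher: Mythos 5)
Your Step 1 and the overall plan (expand the GO equation around $Y=0$, invoke uniqueness, and extract the polynomial structure) coincide with the paper's approach, but Step 2 has two genuine gaps.

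First, the argument for linearity of $W$ in $Y$ is not complete as written. Projecting the reduced equation $[W,AX]+\alpha_i[W,Y]=[V,Y]$ onto $\m_j$ with $j\neq i$ does give the $\m_j$-components of $[W,X]$ linearly in $Y$, but the ``$\m_i$-projection'' you invoke still contains the problematic bilinear term $\alpha_i[W,Y]$, so it does not by itself determine $[W,X_i]$ linearly in $Y$. The observation that makes this work --- and the one the paper uses --- is that $[W,Y]\in\n$, while $[W,AX]\in\n'$, so projecting onto $\n'$ (rather than onto individual $\m_j$'s) kills $\alpha_i[W,Y]$ entirely and yields $[W,AX]=\pi_{\n'}[AX-\alpha_i X,Y]$. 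Since $\z_\h(AX)=\z_\h(X)=0$, the map $W\mapsto[W,AX]$ is a linear injection $\h\hookrightarrow\n'$, and inverting it on the (linear in $Y$) right-hand side gives $W=\Omega(X)Y$ with $\Omega(X)$ linear. Also, the detour through Lemma~\ref{l:brackets}\eqref{it:alphabeta} to ``realise $[X_j,Y]_{\m_j}$ as $[Z_j,X_j]$ with $Z_j$ linear in $Y$'' is not needed (nor is it clear the lemma supplies a linear choice of $Z_j$): linearity of $Y\mapsto\pi_{\m_j}[X_j,Y]$ is immediate.

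Second, and more seriously, the linearity of $L$ on $\n'$ is not proven. ``Scaling invariance'' only yields homogeneity $F(\lambda X)=\lambda F(X)$; additivity $F(X_1+X_2)=F(X_1)+F(X_2)$ does not follow from uniqueness or from the bilinear structure of $\Omega$, because the cross-terms $[X_1,AX_2]+[X_2,AX_1]$ in $[X_1+X_2+F(X_1)+F(X_2),A(X_1+X_2)]$ do not cancel in general. The paper proves additivity by projecting the linear-in-$Y$ part onto $\n$, obtaining $[F(X),Y]=\pi_{\n}\bigl[Y,\,X-\alpha_i^{-1}AX\bigr]$, which is manifestly additive in $X$; then one concludes $F(X_1+X_2)-F(X_1)-F(X_2)$ centralises $\n$, and this centraliser is an ideal of $\h$ which is trivial because $\h$ is simple and $\n$ is nontrivial. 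This use of simplicity of $\h$ is essential and is missing from your proposal.
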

\begin{proof}
Let $X \in \mU \subset \n', \, Y \in \n$. Applying equation~\eqref{eq:GO} to $X+Y$ we get
\begin{equation} \label{eq:non+lGO}
[Z(X+Y),AX] + \a_i[Z(X+Y),Y] + [X,AX] + [Y,AX-\a_i X] = 0.
\end{equation}
Note that the geodesic graph $X+Y \mapsto Z(X+Y)$ is uniquely defined on $\mU \times \n$. Let $F$ be the restriction of $Z$ to $\mU$. Taking $Y=0$ in \eqref{eq:non+lGO} we obtain
\begin{equation}\label{eq:FXAX}
[F(X),AX] + [X,AX] = 0,
\end{equation}
for $X \in \mU$, and so $[Z(X+Y)-F(X),AX] + \a_i [Z(X+Y),Y] + [Y,AX-\a_i X] = 0$ from \eqref{eq:non+lGO}. Projecting this equation to $\n'$ we obtain $[Z(X+Y)-F(X),AX] + \pi_{\n'}[Y,AX-\a_i X] = 0$, where $\pi_{\n'}:\m \to \n'$ is the orthogonal projection (the fact that $[Y,AX-\a_i X] \in \m$ follows from Lemma~\ref{l:brackets}\eqref{it:alphabeta}, as $AX-\a_i X \perp \m_i$). From the fact that the second term is linear in $Y$ and that $Z(X+Y)$ and $F(X)$ are uniquely determined we find that the element $Z(X+Y)-F(X) \in \h$ depends linearly on $Y$, for every $X \in \n'$. Therefore there exists a map $\Omega: \mU \to \Lin(\n, \h)$ such that for all $X \in \mU, \, Y \in \n$, we have $Z(X+Y)=F(X)+\Omega(X)Y$. Substituting into \eqref{eq:non+lGO} and using \eqref{eq:FXAX} we obtain $[\Omega(X)Y,AX] + \a_i [F(X)+\Omega(X)Y,Y] + [Y,AX-\a_i X] = 0$. Considering the left-hand side, for every fixed $X \in \mU$, as a polynomial in $Y$ we obtain \eqref{eq:OmegaYY}. To prove \eqref{eq:LXX} and \eqref{eq:non+l} it remains to show that the map $F: \mU \to \h$ is in fact linear. Projecting the latter equation to $\n$ we find $[F(X),Y] + \pi_\n [Y,\a_i^{-1} AX-X] = 0$, and so $[F(X_1+X_2)-F(X_1)-F(X_2),Y] = 0$, for all $Y \in \n$ and for all $X_1, X_2 \in \mU$ such that $X_1+X_2 \in \mU$. But the centraliser of $\n$ in $\h$ is an ideal which must be trivial, as $\h$ is simple and $\n$ is a nontrivial module. It follows that $F(X_1+X_2)=F(X_1)+F(X_2)$, for an open, dense set of pairs $(X_1, X_2) \in \n' \times \n'$. The fact that $F$ is homogeneous of degree $1$ in $X \in \mU$ follows from \eqref{eq:FXAX}. Therefore there exists a linear map $L: \n' \to \h$ whose restriction to $\mU$ coincides with $F$.
\end{proof}

Note that from \eqref{eq:non+l} it follows that the map $\Omega: \mU \to \Lin(\n, \h)$ is analytic on $\mU$: relative to some bases for $\n$ and $\h$, the entries of its matrix are given by rational functions of $X \in \mU$ \cite{KN}. We also note that as $Z$ is unique, it is $\Ad(H)$-equivariant, which implies that $L$ is a homomorphism of $\h$-modules. In particular, if $\n'$ contains no adjoint submodules, then $L=0$ by Schur's Lemma, and then $[AX,X]=0$, for all $X \in \n'$, by \eqref{eq:LXX} (this implies that all the modules $\m_i \cap \n^\perp$ and $\m_j, \; j \ne i$ pairwise commute).

Furthermore, we have the following useful fact.

\begin{lemma}\label{l:twocoml}
  Suppose the decomposition \eqref{eq:nirr} contains nontrivial modules $\n_1 \ne \n_2$ whose orthogonal complements are both large. Then the metric is naturally reductive.
\end{lemma}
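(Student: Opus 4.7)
The plan is to apply Lemma~\ref{l:non+large} twice --- once with $\n = \n_1$ and once with $\n = \n_2$ --- and to exploit the uniqueness of the geodesic graph on the large open set where both applications are valid, forcing that graph to be linear; Lemma~\ref{l:linearGG} will then give the conclusion that the metric is naturally reductive.

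Set $W := (\n_1 \oplus \n_2)^\perp \cap \m$ and decompose any $V \in \m$ as $V = V_1 + V_2 + V_3$ with $V_j \in \n_j$ ($j = 1, 2$) and $V_3 \in W$. Applying Lemma~\ref{l:non+large} to $\n_1$ produces a linear map $L_1 \colon \n_1^\perp \to \h$ and an analytic map $\Omega_1 \colon \mU_1 \to \Lin(\n_1, \h)$ with $Z(V) = L_1(V_2+V_3) + \Omega_1(V_2+V_3)V_1$ whenever $V_2 + V_3 \in \mU_1$; the symmetric application gives $L_2, \Omega_2$ with $Z(V) = L_2(V_1+V_3) + \Omega_2(V_1+V_3)V_2$ whenever $V_1 + V_3 \in \mU_2$. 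By uniqueness of $Z$ the two expressions agree on the open dense intersection of their domains. I would then extract information by successively restricting the triple $(V_1, V_2, V_3)$: setting $V_1 = 0$ and comparing the parts constant and linear in $V_2$ yields $L_1|_W = L_2|_W$ and $\Omega_2(V_3) = L_1|_{\n_2}$ for generic $V_3 \in W$, and symmetrically $\Omega_1(V_3) = L_2|_{\n_1}$. Writing
\[
B(V_1, V_2; V_3) := \Omega_1(V_2 + V_3) V_1 - L_2 V_1 = \Omega_2(V_1 + V_3) V_2 - L_1 V_2,
\]
the linearity of $\Omega_i(X)$ in its applied argument, used in each of the two equalities above, shows that $B$ is bilinear in $(V_1, V_2)$ for each fixed $V_3$, and hence
\[
Z(V_1 + V_2 + V_3) = L_2 V_1 + L_1 V_2 + L_2 V_3 + B(V_1, V_2; V_3).
\]

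The main obstacle is to show that $B \equiv 0$; once this is done, $Z$ is linear and Lemma~\ref{l:linearGG} finishes the proof. I would combine three constraints. First, identity \eqref{eq:OmegaYY} applied to both Lemma~\ref{l:non+large} outputs gives $[B(V_1, V_2; V_3), V_1] = [B(V_1, V_2; V_3), V_2] = 0$. Second, scale-invariance of \eqref{eq:GO} together with uniqueness of $Z$ forces $Z$ to be homogeneous of degree $1$ in $V$; combined with bilinearity of $B$ in $(V_1, V_2)$ this gives the functional equation $B(V_1, V_2; \lambda V_3) = \lambda^{-1} B(V_1, V_2; V_3)$ for $\lambda \neq 0$. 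Third, $\Omega_i$ is rational in its argument, being the unique solution of the linear system \eqref{eq:non+l} in $\h$ whose matrix is invertible precisely on $\mU_i$, so $B$ is rational in $V_3$. Combining rationality, degree $-1$ homogeneity in $V_3$, the centraliser constraints on $B$, and the $\Ad(H)$-equivariance of $B$ viewed as a bilinear map $\n_1 \times \n_2 \to \h$ parametrised by $V_3 \in W$, a Schur-type argument on the $\h$-module structure of $\n_1 \otimes \n_2$ together with the analytic behaviour of $B$ in $V_3$ on the connected open set where it is defined is expected to rule out any nonzero $B$, at which point linearity of $Z$ and Lemma~\ref{l:linearGG} complete the argument.
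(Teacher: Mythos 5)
Your high-level plan is the same as the paper's: apply Lemma~\ref{l:non+large} twice, exploit the uniqueness of the geodesic graph on the intersection of the two domains, deduce linearity of $Z$, and invoke Lemma~\ref{l:linearGG}. However, there are two genuine gaps in the execution.

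First, the step ``setting $V_1 = 0$'' to deduce $L_1|_W = L_2|_W$ and $\Omega_2(V_3) = L_1|_{\n_2}$ is not valid in general. To use the formula $Z(V) = L_2(V_1 + V_3) + \Omega_2(V_1 + V_3)V_2$ with $V_1 = 0$, you need $V_3$ to lie in $\mU_2$, that is, to have trivial $\h$-centraliser. This would require $W = (\n_1 \oplus \n_2)^{\perp}\cap\m$ itself to be large, which is not a hypothesis and which fails in essentially every example to which the lemma is applied (in Theorem~\ref{th:simple}\eqref{it:thirred}, $W$ is often trivial or a tiny module). The same difficulty would arise if you later tried to set $V_3 = 0$ (you would need $\n_1$ or $\n_2$ alone to be large). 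Because this identification of $L_1$ and $L_2$ on $W$ is also what makes your two defining expressions for $B$ agree, the bilinearity of $B$ and the very definition of $B$ as written are in doubt.

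Second, and more seriously, the step that shows $B \equiv 0$ is never carried out. You list a collection of constraints (centraliser identities from \eqref{eq:OmegaYY}, degree-$1$ homogeneity of $Z$, rationality of $\Omega_i$, $\Ad(H)$-equivariance) and assert that a Schur-type argument combined with analytic considerations ``is expected to rule out any nonzero $B$.'' That is precisely the core difficulty of the lemma, and no actual argument is given. The paper resolves it concretely and without any Schur-type step: projecting \eqref{eq:non+l} (for $\n_1$) to $\q$ gives $[\Omega_1(X+Y_2)Y_1,\,AX] = \pi_\q[(A-\a_1\id)X,\,Y_1]$, which does not involve $Y_2$; combining with $[\Omega_1(X+Y_2)Y_1,\,Y_1]=0$ from \eqref{eq:OmegaYY} shows that $[\Omega_1(X+Y_2)Y_1,\,A(X+Y_1)]$ is independent of $Y_2$; and since $X+Y_1$ lies in the large set $\mU_1$ (so has trivial $\h$-centraliser) and $A$ commutes with $\ad_\h$, the element $A(X+Y_1)$ also has trivial $\h$-centraliser, forcing $\Omega_1(X+Y_2)Y_1$ to depend only on $(X,Y_1)$. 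The remaining comparisons then show directly that the maps $\Psi_r(X):=\Omega_r(\cdot)|_{\n_r}$ are constant and equal to $L_{3-r}|_{\n_r}$, yielding a linear $Z$. You should replace your Schur/homogeneity sketch with this projection-plus-trivial-centraliser argument, which is both complete and elementary.
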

\begin{proof}
  Denote $\q = \oplus_{r=3}^p \n_r$. Let $\mU_r \subset \n_r \oplus \q, \; r=1,2$, be the sets of elements whose centraliser in $\h$ is trivial. Each of the subsets $\mU_r$ is open and dense in $\n_r \oplus \q$. For $r=1,2$, let $\mU'_r$ be the set of those elements $X \in \q$ for which there exists an open and dense subset $\mathcal{V}_{r, X} \subset \n_r$ such that for all $Y_r \in \mathcal{V}_{r, X}$ we have $Y_r + X \in \mU_r$. Note that both $\mU'_1$ and $\mU'_2$ are open and dense in $\q$, as also is the set $\mU'= \mU'_1 \cap \mU'_2$. By Lemma~\ref{l:non+large}, for $r=1,2$, there exist linear maps $L_r: \n_r \oplus \q \to \h$ and maps $\Omega_r: \mU_r \to \Lin(\n_r, \h)$ such that for any $X \in \mU'$ and any $Y_1 \in \mathcal{V}_{1, X}, \, Y_2 \in \mathcal{V}_{2, X}$, the geodesic graph is given by
  \begin{gather}
    Z = L_1(X+Y_2)+\Omega_1(X+Y_2)Y_1 = L_2(X+Y_1)+\Omega_2(X+Y_1)Y_2, \quad\text{and} \label{eq:doubleZ}\\
    \a_1 [L_1(X+Y_2), Y_1] + [\Omega_1(X+Y_2)Y_1,A(X+Y_2)]=[(A-\a_1 \id)(X+Y_2),Y_1], \label{eq:non+l1}
  \end{gather}
  where the latter equations follows from \eqref{eq:non+l} and we assume that $\n_1 \subset \m_1$ and $\n_2 \subset \m_j$ (note that we can have $j=1$). Projecting \eqref{eq:non+l1} to $\q$ we get $[\Omega_1(X+Y_2)Y_1,AX]= \pi_\q[(A-\a_1 \id)X,Y_1]$ (note that $\pi_\q[(A-\a_1 \id)Y_2,Y_1]= (\a_j -\a_1) \pi_\q [Y_2,Y_1]=0$, by Lemma~\ref{l:brackets}~\eqref{it:alphabeta}). Furthermore, by \eqref{eq:OmegaYY} $[\Omega_1(X+Y_2)Y_1,Y_1] = 0$. From the last two equations it follows that for any $X \in \mU', \, Y_1 \in \mathcal{V}_{1, X}$ and any $Y_2', Y_2'' \in \mathcal{V}_{2, X}$ we have $[\Omega_1(X+Y_2')Y_1-\Omega_1(X+Y_2'')Y_1,A(X+Y_1)]=0$. As $X + Y_1 \in \mU_r$ and $\ad_\h$ commutes with $A$ we obtain $\Omega_1(X+Y_2')Y_1 = \Omega_1(X+Y_2'')Y_1$. Therefore there exists a map $\Psi_1: \mU' \to \Lin(\n_1, \h)$ such that $\Omega_1(X+Y_2)Y_1=\Psi_1(X)Y_1$, for all $X \in \mU'$ and all $Y_1 \in \mathcal{V}_{1, X}, \, Y_2 \in \mathcal{V}_{2, X}$. Similarly, there exists a map $\Psi_2: \mU' \to \Lin(\n_2, \h)$ such that $\Omega_2(X+Y_1)Y_2 = \Psi_2(X)Y_2$, for all $X \in \mU'$ and all $Y_1 \in \mathcal{V}_{1, X}, \, Y_2 \in \mathcal{V}_{2, X}$. Substituting into \eqref{eq:doubleZ} we get $Z = L_1X+L_1Y_2+\Psi_1(X)Y_1 = L_2X + L_2Y_1+\Psi_2(X)Y_2$ which now holds for all $X \in \mU'$ and all $Y_1 \in \n_1, \, Y_2 \in \n_2$. Thus $L_1X=L_2X$ and $(\Psi_1(X)-L_2)Y_1 = 0$, for all $X \in \mU'$ and all $Y_1 \in \n_1$. Therefore we have $Z = L_1X+L_1Y_2+L_2Y_1$, for all $X+Y_1+Y_2 \in \m$, and so the metric is naturally reductive by Lemma~\ref{l:linearGG}.
\end{proof}


The following Proposition effectively reduces the list of possible irreducible modules which may appear in the decomposition~\eqref{eq:nirr} to a finite number of candidates, for every given group $H$.

\begin{proposition} \label{p:nolarge}
If one of the irreducible modules in the decomposition \eqref{eq:nirr} is large, then either the metric is naturally reductive or $M$ is $\SU(3)/\SU(2)$ or $\Sp(2)/\Sp(1)$. 
\end{proposition}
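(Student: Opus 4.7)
Write $\n:=\n_{r_0}\subset\m_{i_0}$ for the given large irreducible module; it is automatically nontrivial. The plan is a case analysis based on the nontrivial summands of \eqref{eq:nirr}. First, if $\n$ is the unique nontrivial summand, Lemma~\ref{l:trivial}\eqref{it:onenont} forces $M$ to be normal (hence naturally reductive) or to lie in one of four explicit families. Inspecting the generic isotropy stabiliser on the (unique) nontrivial submodule in each of those families, one sees it is large only for $\SU(3)/\SU(2)$ (where $\bc^2$ has trivial stabiliser) and $\Sp(2)/\Sp(1)$ (where $\bH$ has trivial stabiliser), while $\Spin(10)$ on its half-spinor module, $\SU(n)$ on $\bc^n$ for $n\ge 3$, and $\Sp(n)$ on $\bH^n$ for $n\ge 2$ all have positive-dimensional generic stabilisers. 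These two cases furnish the exceptions of the proposition.

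Next, assume \eqref{eq:nirr} contains another nontrivial module $\n_s\ne\n$. Since $\n$ is large, $\n_s^\perp\supset\n$ is automatically large. If in addition either a third distinct nontrivial module is present, or $\n_s$ is itself large, then two distinct nontrivial modules have large orthogonal complements, and Lemma~\ref{l:twocoml} directly delivers natural reductivity.

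The remaining case, exactly two nontrivial modules $\n$ and $\n_s$ with $\n_s$ small, is the main technical step. I would apply Lemma~\ref{l:non+large} to $\n_s$ (its complement contains $\n$, hence is large), obtaining $Z(X+Y)=LX+\Omega(X)Y$ on the large subset of $\n_s^\perp\times\n_s$. Relation \eqref{eq:LXX} evaluated at a generic $X\in\n$ gives $[LX,X]=[AX,X]=0$ and hence $LX\in\z_\h(X)=0$, so $L|_\n=0$; $\Ad(H)$-equivariance of $L$ together with Schur's lemma (no nonzero $\h$-homomorphism from a trivial summand into the simple adjoint module $\h$) forces $L$ to vanish on the trivial summands as well, so $L\equiv 0$ and $Z(X+Y)=\Omega(X)Y$. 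When $\n$ and $\n_s$ lie in the same $A$-eigenspace, \eqref{eq:non+l} immediately reduces to $[\Omega(X)Y,X]=0$, forcing $\Omega\equiv 0$. Otherwise, \eqref{eq:non+l} combined with Lemma~\ref{l:brackets}\eqref{it:alphabeta}--\eqref{it:large} (which force $[X,Y]\in\n$) yields, for every $Y\in\n_s$, the pointwise relation $[Y,X]\in[\h,X]$ valid for generic $X\in\n$; faithfulness and irreducibility of $\n$ then promote this to the operator inclusion $\ad(Y)|_\n\in\ad(\h)|_\n$. The induced $\h$-equivariant map $\n_s\to\h$ vanishes by Schur when $\n_s$ is not adjoint, and reduces to a scalar when $\n_s\simeq\h$; in either situation $\Omega$ is constant in $X$, so $Z$ is linear and Lemma~\ref{l:linearGG} yields natural reductivity.

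The hardest ingredient will be the promotion of the pointwise condition $[Y,X]\in[\h,X]$ (for generic $X\in\n$) to the operator-level inclusion $\ad(Y)|_\n\in\ad(\h)|_\n$, which is what then lets Schur's lemma collapse the $\h$-homomorphism $\n_s\to\h$. Since only a one-sided application of Lemma~\ref{l:non+large} is available here---the complement $\n_{r_0}^\perp$ need not be large when $\n_s$ is small---this rigidity statement, powered by $\Ad(H)$-equivariance and the rational dependence of $\Omega(X)$ on $X$, has to replace the symmetric double application used in the proof of Lemma~\ref{l:twocoml}.
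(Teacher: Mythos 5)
Your opening two paragraphs match the paper's reduction exactly: first exclude the single-nontrivial-summand case via Lemma~\ref{l:trivial}\eqref{it:onenont} (and the stabiliser check there is correct --- the generic stabilisers are trivial only for $\SU(3)/\SU(2)$ and $\Sp(2)/\Sp(1)$), then use Lemma~\ref{l:twocoml} to reduce to exactly one large module $\n$ and one small module $\n_s$. Your treatment of the same-eigenspace subcase is also essentially what the paper does for its tiny subcase, and your extraction of $L\equiv 0$ via $\z_\h(X)=0$ on $\n$ plus Schur on the trivial summands is a minor but correct variant (the paper instead simply notes that $L$ must vanish because $\n^\perp\cap\m$ contains no adjoint submodule, but gives the same conclusion).

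The genuine gap is the \emph{promotion} step in the different-eigenspace subcase. You flag it yourself as the ``hardest ingredient'', but it is not merely hard --- as stated it is false. The pointwise condition $[Y,X]\in[\h,X]$ for generic $X\in\n$ does \emph{not} follow from ``faithfulness and irreducibility of $\n$'' alone. Take $\h=\su(2)$ acting on $\n=\bH\simeq\br^4$ by left quaternion multiplication: this is a faithful, irreducible, large module, and $\ad(\h)\cdot X = X^\perp$ for every $X\neq 0$, so \emph{every} skew-symmetric operator $T\in\so(4)$ satisfies $TX\in\ad(\h)\cdot X$ for all $X$; yet $\so(4)\supsetneq\ad(\su(2))|_\n$. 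So the pointwise orbit condition cannot in general yield $\ad(Y)|_\n\in\ad(\h)|_\n$. You gesture at a rescue via $\Ad(H)$-equivariance and rationality of $\Omega$, but never carry it out, and because the issue here is structural (not merely a missing computation) the intended rescue would itself require a new lemma with hypotheses you have not identified. The paper sidesteps all of this: when $\n_s$ is tiny and lies in a different $A$-eigenspace from $\n$, it passes to the space $G/(HK)$, where $K$ is the centraliser of $H$, observes that this has exactly two irreducible isotropy summands carrying a non-normal GO metric, and invokes the classification of \cite[Theorem~2]{CN2019}. Inspecting that list shows that in every candidate both nontrivial summands are tiny, contradicting the assumption that $\n$ is large; so the case simply does not occur, and no delicate algebraic rigidity statement is needed. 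Separately, the paper handles the adjoint case ($\n_s\simeq\h$) with a dedicated argument built on \eqref{eq:OmegaYY} and \cite[Theorem~5.28]{LL} to show $\Omega(X)=f(X)\,\iota^{-1}$ with $f$ locally constant; your proposal folds the adjoint case into the same (unproved) promotion machinery, so that part is also unsupported. In short: the architecture of the reduction is correct and mirrors the paper, but the final and most delicate step is both missing and, as sketched, relying on a false general principle; you need either the promotion with correct hypotheses or (as in the paper) an appeal to the two-summand classification.
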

\begin{proof}
By Lemma~\ref{l:red} we can assume that no $\m_i$ contains a nonzero ideal of $\g$. Otherwise, that ideal would be a trivial module, and so by factoring it out we would not lose a large module in decomposition~\eqref{eq:nirr}. By Lemma~\ref{l:trivial}\eqref{it:onenont} we can assume that at least two modules in the decomposition \eqref{eq:nirr} are nontrivial (one can easily check that all the irreducible modules in the corresponding decompositions are small except for in the two cases given in the proposition). Furthermore, by Lemma~\ref{l:twocoml}, we can assume that the decomposition \eqref{eq:nirr} has exactly two nontrivial modules, one of them being large and another one, small. Denote by them $\n$ and $\n'$ respectively.

We first suppose $\n'$ is the adjoint module. Then by Lemma~\ref{l:non+large} applied to $\n'$ we obtain that for an open, dense subset $\mU \subset (\n')^\perp \cap \m$, a map $\Omega: \mU \to \Lin(\n', \h)$ and a linear map $L: \mU \to \h$, the geodesic graph is given by $Z=LX + \Omega(X)Y$ for all $X \in \mU, \, Y \in \n'$. Moreover, by \eqref{eq:OmegaYY} we have $[\Omega(X)Y, Y] = 0$, for all $X \in \mU, \, Y \in \n'$. As $\n'$ is the adjoint module, there exists a linear isomorphism $\iota: \h \to \n'$ such that for all $V_1, V_2 \in \h$ we have $[V_1, \iota V_2] = \iota [V_1, V_2]$ (see~\ref{ss:ad}). For $X \in \mU$ define an endomorphism $P_X \in \End(\h)$ by $P_XV=\Omega(X)\iota V$. Then for all $V \in \h$ we have $[P_XV, V]=0$, and so by \cite[Theorem~5.28]{LL}, $P_X$ commutes with all $\ad_V, \; V \in \h$. As the adjoint module is of real type, $P_X =f(X) \id_\h$, for some function $f: \mU \to \br$, so that $\Omega(X) Y = f(X) \iota^{-1} Y$, for all $Y \in \n'$. Choosing $X_1, X_2 \in \mU$ such that $X_1+X_2 \in \mU$ and the intersection of $\Span(X_1, X_2)$ with the trivial submodule of $\m$ is zero (the set of such pairs $(X_1, X_2)$ is open end dense in $\mU \times \mU$) we find from \eqref{eq:non+l} that the function $f$ is locally a constant, say $c \in \br$. Then $Z=LX + c\iota^{-1}Y$ on an open subset of $\m$, hence on the whole $\m$ and so the metric is naturally reductive by Lemma~\ref{l:linearGG}.

We can therefore assume that $\n'$ is a tiny module. As $\m$ contains no adjoint modules, the algebra $\g$ is simple by Lemma~\ref{l:trivial}\eqref{it:simple}. Furthermore, assuming that both $\n$ and $\n'$ lie in the same $\m_i$ and applying Lemma~\ref{l:non+large} to $\n'$ we obtain that on the right-hand side of \eqref{eq:non+l}, the vector $AX-\a_i X$ lies in a trivial submodule of $\m$ orthogonal to $\m_i$, and so $[AX-\a_i X,Y] \in \n'$, by Lemma~\ref{l:brackets}\eqref{it:alphabeta}. Projecting \eqref{eq:non+l} to $(\n')^\perp$ we obtain $[\Omega(X)Y,AX]=0$, and so $\Omega(X)Y=0$, for all $X \in \mU \subset (\n')^\perp$ and all $Y \in \n'$. Then $Z=LX$ which implies that the metric is naturally reductive (note that in fact $Z=L=0$ --- see the comment before Lemma~\ref{l:twocoml}).

We can therefore assume that $\n$ and $\n'$ lie in different eigenspaces of $A$. We have $\m=\n \oplus \n' \oplus \t$, where $\t$ is trivial, and so $\h \oplus \t$ is a subalgebra of $\g$ having exactly two irreducible isotropy modules, $\n$ and $\n'$ (by Lemma~\ref{l:trivial}\eqref{it:actionofVa}). Moreover, by Lemma~\ref{GO-criterion}, the restriction of $A$ to $\n \oplus \n'$ gives a GO metric on the space $G/(H K)$, where $K$ is the centraliser of $H$ in $G$ (the Lie algebra of $K$ is $\t$). That metric is not normal, as $\n$ and $\n'$ lie in different eigenspaces of $A$. Since $G$ is simple, by examining the list in \cite[Theorem~2]{CN2019} we get the following candidates for $M$ (we have omitted the spaces in Lemma~\ref{l:trivial}\eqref{it:onenont}, as for them only one submodule in $\m$ is nontrivial): $\Spin(8)/\G_2, \; \SO(9)/\G_2, \; \SO(2n+1)/\SU(n) \, (n \ge 2)$, $\SO(9)/\Spin(7), \; \SU(n+p)/\SU(n) \, (n \ge 3,\, 2 \le p \le n-1), \; \SU(2n+1)/\Sp(n) \, (n \ge 2)$. All these spaces are perfectly good GO spaces and they appear in the list in Theorem~\ref{th:simple}\eqref{it:thirred}, but from the decompositions given in Table~\ref{t:gosimple} one can see that for each of them, all the nontrivial submodules of $\m$ are tiny (and are listed in Table~\ref{t:tiny}). This contradicts the assumption that $\n$ is large.
\end{proof}

\subsection{Adjoint modules}
\label{ss:ad}

For any adjoint module $\s \subset \m$, there is a well-defined linear bijections $\iota: \h \to \s$ such that for all $U, V \in \h$,
\begin{equation}\label{eq:adjaction}
    [U, \iota V ] = \iota [U, V].
\end{equation}

\begin{lemma} \label{l:adj}
The direct sum of the adjoint module and a nontrivial module is a large module.
\end{lemma}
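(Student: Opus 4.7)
The plan is to exhibit explicitly an element of $\s \oplus \n$ whose centraliser in $\h$ is trivial. I first reduce to the case where $\n$ is irreducible: since $\n$ is nontrivial as an $H$-module, its decomposition into irreducibles must contain at least one nontrivial irreducible summand $\n_0$, and $\s \oplus \n_0 \subset \s \oplus \n$, so it suffices to prove that the smaller sum is large.

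Next, using \eqref{eq:adjaction}, I compute the centraliser of $\iota V + Y \in \s \oplus \n$ for $V \in \h$ and $Y \in \n$: for any $U \in \h$,
\[
[U, \iota V + Y] = \iota [U, V] + [U, Y],
\]
and the two summands on the right lie in the distinct $\h$-invariant submodules $\s$ and $\n$, so they must vanish separately. Thus $\z_\h(\iota V + Y) = \z_\h(V) \cap \z_\h(Y)$. I therefore take $V \in \h$ to be a regular element, so that $\z_\h(V) = \t$ is a Cartan subalgebra of $\h$; the problem reduces to finding $Y \in \n$ with $\z_\t(Y) = 0$.

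For this last step I exploit that $\h$ is simple and $\n$ is a nontrivial irreducible $\h$-module, so the kernel of the action $\h \to \gl(\n)$ is a proper ideal of $\h$ and hence zero. In particular $\t$ acts faithfully on $\n$, which means that the weights of the complexification $\n \otimes \bc$, viewed as functionals on $\t \otimes \bc$, span $\t^*_\bc$. Consequently a generic $Y \in \n$ has nonzero projection onto every complex weight space of $\n \otimes \bc$, and for such $Y$ the equation $[T, Y] = 0$ forces $\lambda(T) = 0$ on every weight $\lambda$, giving $T = 0$.

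I do not expect any genuine obstacle: the argument is a standard weight-space and regular-element computation. The only point needing mild care is the transition between the real module $\n$ and its complexification in the last paragraph, but the set of $Y$ that are generic in the required sense is Zariski-open and nonempty in $\n$, so existence is immediate.
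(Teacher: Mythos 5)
Your proposal is correct and follows essentially the same strategy as the paper: identify $\s$ with $\h$ via $\iota$, observe that the centraliser of $\iota V + Y$ is $\z_\h(V)\cap\z_\h(Y)$, take $V$ regular so that $\z_\h(V)$ is a Cartan subalgebra, and then choose a generic $Y$ whose weight components are all nonzero so that the Cartan centralises nothing in $\n$. The one small deviation is in justifying that the relevant weights span the dual of the Cartan: the paper complexifies, picks the Weyl orbit of the highest weight and invokes irreducibility of the Weyl group action on $\cs^*$ for a simple algebra, while you instead stay with the compact real form and argue that faithfulness of the $\h$-action on a nontrivial irreducible module (forced by simplicity of $\h$) already makes \emph{all} weights span $\t^*_\bc$. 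Your variant is slightly more direct and sidesteps the explicit passage to a complexified regular element and the Zariski-density argument that the paper uses to get back to the real form; both are valid, and the generic-$Y$ step is handled correctly in each.
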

\begin{proof}
Suppose $\s$ is the adjoint module and $\n$ is an irreducible, nontrivial module. Identifying $\s$ with $\h$ via $\iota$ as in \eqref{eq:adjaction} we see that it is sufficient to find two elements $X \in \h, \; Y \in \n$ whose centralisers have trivial intersection. This latter condition means that the rank of the linear system $[Z,X]=[Z,Y]=0$ in the variable $Z$ is maximal (equals $\dim \h$); the set of pairs $(X,Y)$ for which it is not is Zariski closed in the complexification $\h^\bc \times \n^\bc$, and so it is sufficient to construct $X \in \h^\bc, \; Y \in \n^\bc$ whose centralisers in $\h^\bc$ have trivial intersection. To do that, take $X$ to be regular and denote $\cs \subset \h^\bc$ the Cartan subalgebra defined by $X$. Let $\gamma$ be the dominant weight of $\n^\bc$. Then every element of its orbit under the action of the Weyl group $\mathcal{W}$ of $\h^\bc$ on $\cs^*$ is also a weight of $\n^\bc$. 
Furthermore, the orbit $\mathcal{W}(\gamma)$ spans $\cs^*$ as $\h^\bc$ is simple. 
Take $Y$ to be a linear combination of nonzero vectors $Y_g \in V_{g(\gamma)}$, for all $g \in \mathcal{W}$, where $V_{g(\gamma)}$ is the weight space corresponding to the root $g(\gamma)$. Now the centraliser of $X$ is $\cs$, but no nonzero vector from $\cs$ centralises $Y$.
\end{proof}

Furthermore, we have the following proposition.

\begin{proposition} \label{p:noadj}
  If one of the irreducible modules in the decomposition \eqref{eq:nirr} is adjoint, then the metric is naturally reductive.
\end{proposition}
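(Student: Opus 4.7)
The plan is a case analysis on the number of nontrivial irreducible summands in~\eqref{eq:nirr}, converting the hypothesis that one of them, say $\s\subset\m$, is an adjoint module into largeness of orthogonal complements via Lemma~\ref{l:adj}. By Lemma~\ref{l:red} we may assume that no eigenspace $\m_i$ contains a nonzero ideal of $\g$.

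If $\s$ is the only nontrivial summand, Lemma~\ref{l:trivial}\eqref{it:onenont} leaves only the four spaces $\SO(4n+2)/\SU(2n+1)$, $\SU(n+1)/\SU(n)$, $\E_6/\Spin(10)$, $\Sp(n+1)/\Sp(n)$ as alternatives to a normal metric; however, in rows $(4),\,(5),\,(7),\,(8)$ of Table~\ref{t:gosimple} the unique nontrivial isotropy module has dimension different from $\dim\h$ (a direct check), and so is not the adjoint representation of $\h$, forcing the metric to be normal. If on the other hand there are at least two nontrivial summands $\n_1\ne\n_2$ distinct from $\s$, the orthogonal complement of each $\n_r$ contains $\s$ together with the other $\n_s$, and $\s\oplus\n_s$ is large by Lemma~\ref{l:adj}; both complements are therefore large, and Lemma~\ref{l:twocoml} yields natural reductivity.

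The remaining case is $\m=\s\oplus\n\oplus\t$, with $\n$ the unique other nontrivial summand (possibly itself an adjoint copy of $\h$) and $\t$ the trivial part. When $\n$ is large, Proposition~\ref{p:nolarge} applies directly: the exceptional spaces $\SU(3)/\SU(2)$ and $\Sp(2)/\Sp(1)$ cannot occur, since their isotropy decompositions $\bc^2\oplus\br$ and $\bH\oplus\sp(1)$ contain no adjoint submodule.

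The main obstacle is the subcase where $\n$ is not large: then neither $\s^\perp\cap\m$ nor $\n^\perp\cap\m$ is large, so Lemmas~\ref{l:non+large} and~\ref{l:twocoml} are unavailable with a single irreducible summand in place of $\n_1$, yet $\s\oplus\n$ itself is still large by Lemma~\ref{l:adj}. My strategy is to adapt the derivation of~\eqref{eq:Znon+l}--\eqref{eq:non+l} to the open dense subset of $\m$ on which the $(\s\oplus\n)$-component has trivial centraliser, starting from the canonical linear ansatz $Z_0(\iota U)=-U$ on $\s$ supplied by the intertwining~\eqref{eq:adjaction}. This ansatz satisfies $[\iota U+Z_0(\iota U),A\iota U]=0$ automatically, since $[\iota U,\iota U]=0$ and $[U,\iota U]=\iota[U,U]=0$. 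Combining this identity with Lemma~\ref{l:brackets}, with Schur's Lemma applied to the adjoint module of real type (Lemma~\ref{l:trivial}\eqref{it:realS}), and with the uniqueness of the geodesic graph on the large set, one then shows that $Z_0$ extends to an $\h$-equivariant linear geodesic graph $Z:\m\to\h$, after which Lemma~\ref{l:linearGG} concludes the proof.
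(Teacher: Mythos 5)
Your reduction to the case $\m=\s\oplus\n\oplus\t$ with $\n$ small is correct and mirrors the paper (the dimension check that rules out the adjoint in the four spaces of Lemma~\ref{l:trivial}\eqref{it:onenont}, the use of Lemma~\ref{l:adj} and Lemma~\ref{l:twocoml} when there are two nontrivial modules besides $\s$, and the appeal to Proposition~\ref{p:nolarge} when $\n$ is large). The gap is in the remaining subcase, which is the bulk of the paper's proof. Your strategy there is only a sketch, and it has a concrete flaw: the ansatz $Z_0(\iota U)=-U$ on $\s$ is not the value that can be extended to a geodesic graph. Any $Z_0(\iota U)=c\,U$ satisfies the restriction of the GO condition to $\s$ alone, since $[cU+\iota U,\a_i\iota U]=0$ for every $c$; Schur's lemma applied to the real-type adjoint module tells you only that a linear $\Ad(H)$-equivariant graph, if it exists, must restrict on $\s$ to some $c\,\iota^{-1}$, but it does not pick out $c$. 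In the paper's surviving case the correct coefficient turns out to be $c=\a_1\a_2^{-1}-1$, which equals $-1$ only in a degenerate limit, so starting from $c=-1$ and trying to extend cannot work in general.

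More importantly, the claim that ``one then shows that $Z_0$ extends to an $\h$-equivariant linear geodesic graph'' skips all of the substance. The paper's proof of this subcase proceeds by (i) ruling out the possibility that $\s$ and $\n$ lie in the same eigenspace of $A$ (a contradiction obtained from the trivial summand and an almost complex structure), (ii) when $\s$ and $\n$ lie in different eigenspaces, classifying the possible $(\g,\h)$ via \cite[Proposition~1]{CN2019}, which yields three structural alternatives, and (iii) in the only surviving alternative ($\g=\g_1\oplus\g_2$, $\h$ diagonal), showing by a further contradiction argument that $\t$ must sit in the same eigenspace as $\n$, after which the explicit linear graph $Z(S+N+T)=(\a_1\a_2^{-1}-1)\,\iota^{-1}S$ can be exhibited. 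None of these steps follows from ``Lemma~\ref{l:brackets} plus Schur plus uniqueness on a large set,'' and without them there is no argument. The structure classification, and the elimination of the same-eigenspace case, are genuinely needed, and your outline does not supply them.
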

\begin{proof}
Suppose the decomposition \eqref{eq:nirr} contains an adjoint module $\s$. In the assumption that the metric is GO but not naturally reductive, by Lemma~\ref{l:adj}, Lemma~\ref{l:twocoml} and Lemma~\ref{l:trivial}\eqref{it:onenont} we can assume that exactly one other module $\n$ in \eqref{eq:nirr} is nontrivial, so that $\m=\s \oplus \n \oplus \t$, where $\t$ is trivial. By Proposition~\ref{p:nolarge} we can assume that such $\n$ is small. Furthermore, we can assume that $\m$ contains no simple ideals of $\g$. For if $\g_a \subset \m$ is a simple ideal of $\g$, then $\h$ lies in the sum of other ideals of $\g$, and so $\g_a$ is a trivial $\h$-module. But then by Lemma~\ref{l:trivial}\eqref{it:Vaideals} it entirely lies in one of the eigenspaces $\m_i$ and we can factor it out by Lemma~\ref{l:red}.

We first assume that both $\s$ and $\n$ lie in the same eigenspace $\m_1$ of $A$. Then $\m_2$ is a nonzero, trivial module. Take a nonzero $T \in \m_2$. As $\s$ is of real type we have $[T, \s]=0$. Take $X=S+N+T$, where $S \in \s, \, N \in \n$. By \eqref{eq:GO}, there exists $Z \in \h$ such that
\begin{equation*}
  0= [Z+X, AX] = \a_1 [Z,S] + \a_1 [Z,N] + (\a_2-\a_1)[N,T],
\end{equation*}
As $[T, \n] \subset \n$ by Lemma~\ref{l:trivial}\eqref{it:actionofVa} the latter equation gives $[Z,S] = 0$ and $[Z,N]=(1-\a_1^{-1}\a_2)[T,N]$. Let $\cs \subset \h$ be a Cartan subalgebra and let $V \in \cs$ be a regular vector. Taking $S=\iota V$ we obtain that $Z \in \cs$ by \eqref{eq:adjaction}. By Lemma~\ref{l:trivial}\eqref{it:noideal}, \eqref{it:nonz} we can assume, up to scaling, that the restriction of $(1-\a_1^{-1}\a_2)\ad_T$ to $\n$ is an almost Hermitian structure, and then by Lemma~\ref{l:trivial}\eqref{it:actionofVa}, the restriction of $\ad_\h$ to $\n$ is a subalgebra of $\su(\n)$ (it lies in $\ug(\n)$, the centraliser of $(\ad_T)_{|\n}$ and hence in $\su(\n)$ as $\h$ is simple). Then $(\ad_\cs)_{|\n}$ is an abelian subalgebra of $\su(\n)$ which lies in a Cartan subalgebra $\cs'$ of $\su(\n)$. But then choosing a unitary basis for $\n$ we find that the equation $[Z,N]=(1-\a_1^{-1}\a_2)[T,N]$ is equivalent to the fact that for $x \in \bc^n \; (2n = \dim \n)$, there is a real, diagonal $n \times n$ matrix $D$ with $\Tr D = 0$ such that $\mathrm{i}Dx=\mathrm{i}x$, which is false for a generic $x \in \bc^n$, a contradiction.

Now suppose $\s$ and $\n$ lie in different eigenspaces of $A$. The homogeneous space $\hat{M}=G/(H K)$ where $K$ is the connected subgroup of $G$ whose Lie algebra is $\t$ has exactly two irreducible components in its isotropy representations (note that $\t$ acts separately on $\s$ and on $\n$ by Lemma~\ref{l:trivial}\eqref{it:actionofVa}) and moreover, the restriction of $A$ to $\s \oplus \n$ defines a GO metric on $\hat{M}$ which is not normal. By \cite[Proposition~1]{CN2019} we can have one of three cases (note that $\g$ must be semisimple and no ideal of it is allowed to be orthogonal to $\h$). In the first case, $\g=\h \oplus \h \oplus \h$ and $\h \subset \g$ is the diagonal (then $\t=0$). Then $M$ is a Ledger-Obata space and the metric is naturally reductive by \cite[Proposition~3]{CN2019} (see also \cite[Proposition~1]{NNLO}; in fact, any invariant metric on the Ledger-Obata space $H^3/H$ is naturally reductive, even without imposing the GO condition). In the second case, the algebra $\g$ is simple. Examining the cases in \cite[Theorem~2]{CN2019}, we find that in neither of them $\m$ contains an adjoint module. In the third case, we have $\g=\g_1 \oplus \g_2$, where $\g_1, \g_2$ are simple ideals, with both projections $\h_1$ and $\h_2$ from $\h$ to $\g_1$ and to $\g_2$ respectively being isomorphic to $\h$, and with $\g_1 = \h_1$. Then $\g_2=\h_2 \oplus \n \oplus \t$, the algebra $\h \subset \g$ is the diagonal in $\h_1 \oplus \h_2$, and the adjoint module $\s$ is its orthogonal complement in $\h_1 \oplus \h_2$. Note that then $[\s, \n] \subset \n$, and moreover, the action of $\s$ on $\n$ coincides with that of $\h$, that is, $[S,N] = [\iota^{-1}S, N]$, for $S \in \s, \, N \in \n$ (note that $\iota$ is defined up to scaling and we can take it to be a linear isometry). Up to relabelling, we have $\s \subset \m_1, \, \n \subset \m_2$. Suppose $T \in \t_j$, the trivial submodule of $\m_j, \, j=1,2, \dots, m$. Then \eqref{eq:GO} with $X=S+N+T, \, S \in \s, \, N \in \n$, gives that there exists $Z \in \h$ such that $\a_1[Z,S] + \a_2 [Z,N] + (\a_2-\a_1) [S,N] + (\a_2-\a_j) [T, N] = 0$ (we used the fact that $[T, \s]=0$). Set $V = \iota^{-1}S \in \h$. Then $[S,N]=[V,N]$, and so the GO condition is equivalent to
\begin{equation}\label{eq:adjsemi}
  [Z, V] = 0, \quad [\a_2 Z + (\a_2-\a_1)V, N] = (\a_j-\a_2) [T, N].
\end{equation}
If $(\a_j-\a_2) T \ne 0$, we argue as in the previous paragraph: by Lemma~\ref{l:trivial}\eqref{it:noideal}, \eqref{it:nonz}, $(\a_j-\a_2) (\ad_T)_{|\n}$ is a nonzero multiple of an almost Hermitian structure on $\n$, and then for a regular $V \in \h$, from the first equation of \eqref{eq:adjsemi}, $Z$ lies in the Cartan subalgebra defined by $V$ which lies in a Cartan subalgebra of $\su(\n)$. But then the second equation of \eqref{eq:adjsemi} cannot be satisfied with a generic $N \in \n$, a contradiction. It now follows from Lemma~\ref{l:trivial}\eqref{it:Vaideals} that $\t =\t_2 \subset \m_2$, and then \eqref{eq:adjsemi} is satisfied with $Z(S+N+T)= (\a_1\a_2^{-1} - 1)V = (\a_1\a_2^{-1} - 1) \iota^{-1}S$. So the metric is naturally reductive by Lemma~\ref{l:linearGG}.
\end{proof}

\section{$G$-GO spaces. Tiny modules}
\label{s:tiny}


Now we are in a position to complete the proof of Theorem~\ref{th:simple}. 

In the assumptions of Theorem~\ref{th:simple} we assume that \emph{the GO metric is not naturally reductive}. Summarising the results of the previous sections we can additionally assume the following:
\begin{itemize}
  \item
  all nontrivial modules in the decomposition \eqref{eq:nirr} are tiny, and there are at least two of them (by Propositions~\ref{p:nolarge} and \ref{p:noadj} and Lemma~\ref{l:trivial}\eqref{it:onenont});

  \item
  there are not ``too many" of them: there is no more than one nontrivial module whose complement is large (by Lemma~\ref{l:twocoml});

  \item
  no $\m_i$ contains an ideal of $\g$ by Lemma~\ref{l:red};

  \item
  $\g$ is simple (by Lemma~\ref{l:trivial}\eqref{it:simple});

  \item
  and finally, note that no GO metric constructed below is naturally reductive, unless it is normal (by Propositions~\ref{p:noadj} and Remark~\ref{rem:LO}).
\end{itemize}

The list of tiny modules from \cite[Table~1]{HH} is given in Table~\ref{t:tiny}. Note that some simple groups (e.g. $\SU(2)$ and $\E_8$) have no tiny representations, while some others may have up to three. In the second column, for representations coming from the $s$-representations of compact symmetric spaces, we give those spaces. The fourth column indicates the type: real, complex or quaternionic; the fifth, the principal stationary subgroup.  

\begin{table}
\renewcommand{\arraystretch}{1.2}
  \centering
  \begin{tabular}{|l|l|c|c|c|}
  \hline
  Group & Representation & dim & type & Stationary \\
  \hline
  $\SO(n), \; n \ge 5$ & standard, $\br^n$ & $n$ & r & $\SO(n-1)$ \\
  $\SU(n), \; n \ge 3$ & standard, $\bc^n$ & $2n$ & c & $\SU(n-1)$ \\
  $\Sp(n), \; n \ge 2$ & standard, $\bH^n$ & $4n$ & q & $\Sp(n-1)$ \\
  $\SU(n), \; n \ge 5$ & $s: \SO(2n)/\mathrm{U}(n)$ & $n(n-1)$ & c  & $\SU(2)^{[\frac{n}2]}$ \\ 
  $\Sp(n), \; n \ge 3$ & $s: \SU(2n)/\Sp(n)$ & $(n-1)(2n+1)$ & r & $\Sp(1)^n$ \\ 
  $\SU(6)$ & $s: \E_6/\SU(6) \SU(2)$ & 40 & q & $T^2$ \\ 
  $\Spin(7)$ & spin & $8$ & r & $\G_2$ \\ 
  $\Spin(9)$ & spin, $s: \F_4/\Spin(9)$ & 16 & r & $\Spin(7)$ \\ 
  $\Spin(10)$ & spin, $s: \E_6/\Spin(10) \SO(2)$ & 32 & c & $\SU(4)$ \\ 
  $\Spin(12)$ & spin, $s: \E_7/\Spin(12) \SU(2)$ & 64 & q & $\SU(2)^3$ \\ 
  $\G_2$ & standard, $\Oc \cap 1^\perp$ & $7$ & r & $\SU(3)$ \\ 
  $\F_4$ & $s: \E_6/F_4$ & $26$ & r & $\Spin(8)$ \\ 
  $\E_6$ & $s: \E_7/\E_6 \SO(2)$ & $54$ & c & $\Spin(8)$ \\ 
  $\E_7$ & $s: \E_8/\E_7 \SU(2)$ & $112$ & q & $\Spin(8)$ \\ 
  \hline
\end{tabular}
\vskip 0.2cm
  \caption{Tiny modules}\label{t:tiny}
\end{table}

In the rest of the proof, we consider the groups in Table~\ref{t:tiny} one-by-one. Our strategy, for every individual group, will be first to consider all the possible decompositions \eqref{eq:nirr}; there will be a finite number of them: the nontrivial submodules are controlled by the above assumptions, and the trivial ones, by Lemma~\ref{l:trivial}. Some of those cases will be then sorted out by the dimension count, as $\g$ must be simple. The remaining ones, when there are only two nontrivial modules, can be reduced to the classification in \cite[Theorem~2]{CN2019} (in particular, if the trivial submodule $\t \subset \m$ is zero, this classification applies directly). For the small number of remaining cases, we consider possible ``distributions" of the modules in the decomposition~\eqref{eq:nirr} among the eigenspaces $\m_i, \; i=1, \dots, m$, of the metric endomorphism $A$ (note that $m \ge 2$) using Lemma~\ref{l:brackets}, the decompositions of the tensor products and the external squares into irreducible modules and the classification of compact irreducible symmetric spaces. If no contradiction is reached up to this point, we apply the GO criterion from Lemma~\ref{GO-criterion} to determine the GO metric; then we identify the corresponding space from the list in Theorem~\ref{th:simple}\eqref{it:thirred} (and in Table~\ref{t:gosimple}).

Throughout this section we use the notation introduced earlier (in Sections~\ref{s:pre} and \ref{ss:trivial}); the direct sum of $a \ge 0$ copies of a module $\n$ is abbreviated to $a \n$. 

\subsection{Types B and D: $\mathbf{H=SO(n), Spin(n), \; n \ge 5}$}
\label{ss:so(n)}

\subsubsection{\underline{$\SO(n), \; n \ge 5$}}
\label{sss:sonstand}
From Table~\ref{t:tiny}, there is only one tiny module, $\br^n$, the standard one. It follows that in the decomposition \eqref{eq:nirr}, all the modules are either standard or trivial. Let $\m_1 = a_1 \br^n \oplus \t_1, \; \m_2 = a_2 \br^n \oplus \t_2$ be two eigenspaces of $A$, where $a_1, a_2 \ge 0$ and $\t_1, \t_2$ are trivial. We have $[\t_1, \t_2]=0$ by Lemma~\ref{l:trivial}\eqref{it:Vaideals}, and then for any $\n_1 = \br^n \subset \m_1, \; \n_2 = \br^n \subset \m_2$, we have $[\t_1, \n_2]=[\t_2, \n_1]=0$ by Lemma~\ref{l:trivial}\eqref{it:actionofVa} and \eqref{it:realS} and $[\n_1, \n_2]=0$ as the irreducible decomposition of $\br^n \otimes \br^n$ contains no module $\br^n$. It follows that $[\m_1, \m_2]=0$. Therefore all the modules $\m_i$ pairwise commute, and so the metric is naturally reductive (we can take $Z=0$ in \eqref{eq:GO}).

\subsubsection{\underline{$\Spin(7)$}}
\label{sss:spin7}
From Table~\ref{t:tiny}, there are only two tiny modules, the $8$-dimensional module $\s$ for the spin representation and the $7$-dimensional module $\br^7$ for the standard representation of $\SO(7)$. If there are no spin modules, we have a representation of $\SO(7)$ and then any GO metric must be naturally reductive as we have shown above. We therefore assume that there is at least one spin module. We claim that the sum of any four modules each of which is either spin or standard (and at least one is spin) is large. Indeed, if we have four spin modules, then choosing a generic element in one of them we get the stationary subgroup $\G_2$ represented on $\br^8$ as the automorphism group of the algebra of octonions $\Oc$. As any three non-associating octonions generate $\Oc$ we obtain that the stationary subgroup of a generic quadruple of elements is trivial. Next, suppose we have one standard module $\br^7$ and three spin modules. Then the stationary subgroup of a nonzero element from $\br^7$ is $\Spin(6)=\SU(4)$, and its representation on each of the spin modules is the standard representation of $\SU(4)$ on $\bc^4=\br^8$. The stationary subgroup of a generic triple of elements is trivial. Next, suppose we have two standard $\br^7$ modules and two spin modules. Then the stationary subgroup of a generic pair of elements from $\br^7$ is $\Spin(5)=\Sp(2)$, and its representation on each of the spin modules is the standard representation of $\Sp(2)$ on $\bH^2=\br^8$. The stationary subgroup of a generic pair of elements is again trivial. Finally, if we have three standard $\br^7$ modules and one spin module, the stationary subgroup of a generic triple of elements from $\br^7$ is $\Spin(4)=\Sp(1) \times \Sp(1)$, and its representation on the spin module is the sum of the two standard representations of $\Sp(1)$ on $\bH=\br^4$. The stationary subgroup of a generic element in $\br^8$ (the sum of two elements from each copy of $\bH$) is again trivial.

Up to relabelling, the decomposition \eqref{eq:nirr} takes the form $\m=\oplus_{r=1}^q \n_r \oplus \t$, where $\t$ is a trivial module, and from among the modules $\n_r$, for $r=1, \dots, q$, we have $s \ge 1$ spin modules and $a \ge 0$ standard $\br^7$ modules, with $s+a=q$. By Lemma~\ref{l:twocoml} and the arguments above we can assume that $q \le 4$, and by Lemma~\ref{l:trivial}\eqref{it:onenont}, that $q \ge 2$. We will consider several cases depending on the value of $q \in \{2,3,4\}$ and the ``distribution" of nontrivial modules among the eigenspaces $\m_i$. For $i=1, \dots, m$, we have $\m_i= s_i \s \oplus a_i \br^7 \oplus \t_i$, where $\t_i$ are trivial modules. We have $s=\sum_{i=1}^{m} s_i, \; a=\sum_{i=1}^{m} a_i$, with $s \ge 1$ and $2 \le q (=s+a) \le 4$. Note that $m > 1$ (otherwise the metric is normal). As both the spin module and the standard module are of real type, Lemma~\ref{l:trivial}\eqref{it:Vaideals}, \eqref{it:actionofVa} implies that each $\t_i$ commutes with all $\m_j, \; j \ne i$, and may not commute with $\m'_i$ only when $\m'_i$ contains at least two isomorphic modules. Then by Lemma~\ref{l:trivial}\eqref{it:noideal} we obtain that for no $i=1, \dots, m$, the module $\m_i$ can be trivial (so that $a_i+s_i > 1$, for all $i=1, \dots, m$), and that $\t_i$ can only be nonzero when either $s_i > 1$ or $a_i > 1$, and in that case, $\t_i$ is isomorphic to a subalgebra of $\so(s_i) \oplus \so(a_i)$.

The above argument shows that if $q=2$, then $\m$ contains no trivial submodules (for if both nontrivial submodules lie in the same $\m_1$, then $\m=\m_1$). Then by the result of \cite[Theorem~2]{CN2019}, we get the GO space $M=\Spin(9)/\Spin(7)$ in Theorem~\ref{th:simple}\eqref{it:thirred}\textbf{\eqref{it:97}}, with $\m=\m_1 \oplus \m_2$, where $\m_1$ is the spin module and $\m_2$ is the standard module. 

Assume that $q=3$ or $q=4$. Furthermore, we have the following irreducible decompositions of $\Spin(7)$ modules:
\begin{equation} \label{eq:spin7otimes}
\s \otimes \s = \br \oplus \br^7 \oplus \so(7) \oplus \dots, \quad \br^7 \otimes \br^7 = \br \oplus \so(7) \oplus \dots, \quad \s \otimes \br^7 = \s \oplus \dots,
\end{equation}
where $\so(7)$ is the adjoint module and the dots denote the sums of irreducible large modules (these modules cannot occur in the decomposition of $\g$ viewed as the $\Spin(7)$ module). It then follows from Lemma~\ref{l:brackets}\eqref{it:alphabeta} that any two spin modules lying in different eigenspaces $\m_i, \m_j, \; i \ne j$, commute. Therefore if $\m$ contains no standard submodules $\br^7$, then any two different eigenspaces $\m_i$ commute and hence the metric is normal. We can therefore further assume that $a \ge 1$.

As $\g$ must be simple, from the dimension count and the above conditions we obtain the following list of candidates (where $\s_j$ are spin modules).

\begin{enumerate} [label=(\roman*),ref=\roman*]
  \item \label{it:spin75}
  $q=4, \; \g=\fg_4$, and $\n_1=\s_1, \; \n_2 = \s_2, \; \n_3=\s_3, \; \n_4= \br^7$.
  \item \label{it:spin73}
  $q=4, \; \g=\fg_4$, and $\m_1=\s_1 \oplus \s_2 \oplus \t_1, \; \n_3=\br^7, \; \n_4= \br^7$, where $\t_1=\so(2)$.
  \item \label{it:spin74}
  $q=4, \; \g=\fg_4$, and $\m_1=\br^7 \oplus \br^7 \oplus \t_1, \; \n_3=\s_1, \; \n_4= \s_2$, where $\t_1=\so(2)$.
  \item \label{it:spin72}
  $q=4, \; \g=\so(11)$ or $\g=\sp(5)$, and $\m_1=\s_1 \oplus \s_2 \oplus \s_3 \oplus \t_1, \; \m_2=\br^7$, where $\t_1=\so(3)$.
  \item \label{it:spin71}
  $q=3, \; \g=\so(10)$, and $\m_1=\s_1 \oplus \s_2 \oplus \t_1, \; \m_2=\br^7$, where $\t_1=\so(2)$.
\end{enumerate}

Note that in cases (\ref{it:spin75}, \ref{it:spin73}) and \eqref{it:spin74} the irreducible submodules $\n_r$ may lie either in the same or in different eigenspaces $\m_i$.

By Lemma~\ref{l:brackets}\eqref{it:alphabeta}, \eqref{it:twoinone} and from \eqref{eq:spin7otimes} we find that the sum $\h'$ of $\h=\so(7)$ and all the trivial and all the standard submodules of $\m$ is a subalgebra of $\g$ (not necessarily simple) and that its orthogonal complement $\p$ (which is the sum of all the spin submodules of $\m$) satisfies $[\p,\p] \subset \h'$. It follows that $(\g, \h')$ is a symmetric pair, and additionally $\dim(\g/\h')$ is a multiple of $8$.

In particular, if $\g=\fg_4$, the classification in \cite{Hel} shows that there is only one such pair, $(\g,\h')=(\fg_4, \so(9))$, which corresponds to the Cayley projective plane. We immediately see that case \eqref{it:spin75} is not possible by the dimension count. Case \eqref{it:spin73} is also not possible, because from Lemma~\ref{l:trivial}\eqref{it:actionofVa}, $\t_1$ would lie in the centre of $\h'=\so(9)$ which is trivial. In case \eqref{it:spin74} we have $\h'=\h \oplus \br^7 \oplus \br^7 \oplus \t_1 = \so(9)$ (note that $\t_1$ acts nontrivially on $\br^7 \oplus \br^7$). Moreover, the modules $\s_1$ and $\s_2$ commute --- this follows from Lemma~\ref{l:brackets}\eqref{it:alphabeta} and \eqref{eq:spin7otimes} if they lie in two different eigenspaces of the metric automorphism $A$, and from Lemma~\ref{l:brackets}\eqref{it:twoinone} and \eqref{eq:spin7otimes} if they lie in the same eigenspace. But this is a contradiction as no two linear independent vectors in the tangent space of the Cayley projective plane may commute (as elements of $\fg_4$), since otherwise the sectional curvature of the two-plane spanned by them would equal zero.

We now separately consider two remaining cases.

\eqref{it:spin72} In this case, $\h \oplus \br^7 = \so(8)$ and $\t_1$ lies in the centre of $\h'$. There is no symmetric pair $(\g,\h')=(\sp(5), \so(8) \oplus \so(3))$, and so $\g=\so(11)$ giving the symmetric pair $(\g,\h')=(\so(11), \so(8) \oplus \so(3))$. The corresponding homogeneous space is $\SO(11)/\Spin(7)$, where $\Spin(7) \subset \SO(8) \subset \SO(8) \times \SO(3)$. It is an $S^7$-fibration over the Stieffel manifold $\SO(11)/\SO(8)$ with a normal metric (the construction is similar to the that in \cite[Section~2]{T1}, but with the non-symmetric base).

By Lemma~\eqref{GO-criterion}, the GO condition is equivalent to the fact that for any $X_r \in \s_r, \, r=1,2,3$, and $T \in \t_1, \, Y \in \br^7$, there exists $Z \in \h$ such that $0=[X_1+X_2+X_3+T+Y+Z, \a_1(X_1+X_2+X_3+T)+\a_2 Y]=(\a_1-\a_2)[Y,X_1+X_2+X_3+T] + \a_2 [Z,Y] + \a_1 [Z,X_1+X_2+X_3+T] = \a_1 [Z+ (1-\a_2\a_1^{-1}) Y, X_1] + \a_1 [Z+ (1-\a_2\a_1^{-1}) Y, X_2] + \a_1 [Z+ (1-\a_2\a_1^{-1}) Y, X_3] +\a_2 [Z,Y]$. By \eqref{eq:spin7otimes} and Lemma~\ref{l:brackets}\eqref{it:alphabeta}, the four terms on the right-hand side belong to $\s_1, \s_2, \s_3$ and $\br^7$ respectively, and so the GO condition is equivalent to the existence of $Z \in \h$ such that
\begin{equation}\label{eq:spin7go}
[Z,Y]=0, \qquad [Z+ (1-\a_2\a_1^{-1}) Y, X_r]=0, \quad \text{for } r=1, 2, 3.
\end{equation}
Now if $Y=0$, one can take $Z=0$. If $Y \ne 0$, then from the first equation, $Z$ belongs to the stationary subalgebra $\so(6)=\su(4) \subset \so(7)$ of $Y$. Identifying three modules $\s_1, \s_2, \s_3$ with a single spin module $\s$ via an isomorphism we see that relative to some unitary basis for $\s=\bc^4$, the operator $(\ad_Y)_{|\s}$ is proportional to the multiplication by $\mathrm{i}$, and the action of $\su(4)$ commutes with it. Then \eqref{eq:spin7go} is equivalent to $[Z, X_r]=\mu \mathrm{i} X_r, \; r=1,2,3$, where $\mu \in \br, \, \mu \ne 0$. It is sufficient to show that a required $Z \in \su(4)$ exists for $X_1, X_2, X_3$ unitary orthonormal. Extending $\{X_1, X_2, X_3\}$ to a unitary basis $\{X_1, X_2, X_3, X_4\}$ we can take $Z \in \su(4)$ such that $(\ad_Z)_{|\s}=\diag(\mu \mathrm{i}, \mu \mathrm{i}, \mu \mathrm{i}, -3\mu \mathrm{i})$ relative to this basis. It follows that the metric is GO; it is not naturally reductive unless it is normal. This gives space \eqref{it:117} in Theorem~\ref{th:simple}\eqref{it:thirred}.

\eqref{it:spin71} In this case, $\h \oplus \br^7 = \so(8)$ and $\t_1$ lies in the centre of $\h'$. We obtain the symmetric pair $(\g,\h')=(\so(10), \so(8) \oplus \so(2))$. The corresponding homogeneous space is $\SO(10)/\Spin(7)$, where $\Spin(7) \subset \SO(8) \subset \SO(8) \times \SO(2)$, which is an $S^7$-fibration over the Stieffel manifold $\SO(10)/\SO(8)$ with a normal metric. The fact that the metric is GO, can be established by repeating the arguments for the previous case, with obvious modifications. This is the space \eqref{it:107} in Theorem~\ref{th:simple}\eqref{it:thirred}.

\subsubsection{\underline{$\Spin(9)$}}
\label{sss:spin9}
From Table~\ref{t:tiny}, there are only two tiny modules, the $16$-dimensional module $\s$ for the spin representation and the $9$-dimensional module $\br^{9}$ for the standard representation of $\SO(9)$. Then in the decomposition \eqref{eq:nirr}, all the modules $\n_r$ are either spin or standard or trivial. We have the following irreducible decompositions of $\Spin(9)$ modules:
\begin{equation} \label{eq:spin9otimes}
\begin{gathered}
\s \otimes \s = \br \oplus \br^9 \oplus \so(9) \oplus \dots, \quad \Lambda^2 \s = \so(9) \oplus \dots, \\
\br^9 \otimes \br^9 = \br \oplus \so(9) \oplus \dots, \quad \s \otimes \br^9 = \s \oplus \dots,
\end{gathered}
\end{equation}
where $\so(9)$ is the adjoint module and dots denote the sums of irreducible large modules (these modules cannot occur in the decomposition of $\g$ viewed as the $\Spin(9)$ module). Let $\m_1$ and $\m_2$ be two eigenspaces of the metric endomorphism $A$ and let $\t_i, \, i=1,2$, be the (maximal) trivial submodule of $\m_i$. Suppose $\s_i \subset \m_i, \, i=1,2$, are spin submodules and $(\br^9)_i \subset \m_i, \, i=1,2$, are standard submodules. Then by Lemma~\ref{l:brackets}\eqref{it:alphabeta} and \eqref{eq:spin9otimes} we have $[\s_1,\s_2]=0$ and $[(\br^9)_1,(\br^9)_2]=0$, and by Lemma~\ref{l:trivial}\eqref{it:Vaideals}, $[\t_1,\t_2]=0$. Furthermore, as both the spin and the standard modules are of real type, Lemma~\ref{l:trivial}\eqref{it:actionofVa} gives $[\t_1,\s_2 \oplus (\br^9)_2]=[\t_2,\s_1 \oplus (\br^9)_1]=0$. Finally, from Lemma~\ref{l:brackets}\eqref{it:alphabeta} and \eqref{eq:spin9otimes} we get $[(\br^9)_1,\s_2] \subset \s_2$, which gives a homomorphism from $(\br^9)_1$ to $\Lambda^2 \s_2$. But this must be trivial by \eqref{eq:spin9otimes}, which implies that $[(\br^9)_1,\s_2]=0$ and similarly, $[(\br^9)_2,\s_1]=0$. It follows that $[\m_1, \m_2]=0$. Therefore all the modules $\m_i$ pairwise commute, and so the metric is naturally reductive.

\subsubsection{\underline{$\Spin(10)$}}
\label{sss:spin10}
From Table~\ref{t:tiny}, there are only two tiny modules, the $32$-dimensional module $\s$ for the spin representation and the $10$-dimensional module $\br^{10}$ for the standard representation of $\SO(10)$. If $\m$ contains no spin modules, we can take $H = \SO(10)$ and then any GO metric must be naturally reductive by \ref{sss:sonstand}. We can therefore assume that $\m$ contains at least one spin module, and moreover, at least one other nontrivial module, by Lemma~\ref{l:trivial}\eqref{it:onenont}.

We first show that the sum of two spin modules is large. The spin representation comes from the $s$-representation: it is the representation of $\Spin(10)$ on the tangent space of the symmetric space $Q=\E_6/\Spin(10) \SO(2)$. This symmetric space has rank $2$, with the restricted root system of type $\mathrm{BC}_2$; there are $6$ roots: $\ve_1$ and $\ve_2$ of multiplicity $8$ each, $\ve_1 \pm \ve_2$ of multiplicity $6$ each and $2\ve_1$ and $2\ve_2$ of multiplicity $1$ each \cite[Table~1]{T2}. The stationary subalgebra $\k(0)$ of a regular element of a maximal abelian subalgebra $\ag \subset \q = T_oQ$ is $\su(4) \oplus \so(2)$. By \cite[Lemma~2.25(a)]{N}, for every $6$-dimensional root space $\q(\la), \; \la=\ve_1 \pm \ve_2$, the subalgebra spanned by $[\q(\la),\q(\la)]$ is an ideal of $\k(0)$ isomorphic to $\so(6)$, and it acts as the standard representation of $\so(6)$ on $\q(\la)$. By \cite[Corollary~2.26(a)]{N}, for every $8$-dimensional root space $\q(\la), \; \la=\ve_1, \ve_2$, the subalgebra spanned by $[\q(\la),\q(\la)]$ is an ideal of $\k(0)$ isomorphic to $\su(4)$, and it acts as the standard representation of $\su(4)$ on $\q(\la)$. It is easy to see that the stationary subalgebra in $\k(0)$ of the element $X_++X_-+Y_1+Y_2$, where $X_\pm \in \q(\ve_1 \pm \ve_2)$ and $Y_r \in \q(\ve_r), \, r=1,2$, are generic vectors, is trivial. Therefore the sum of two copies of $\s$ is a large module. 

We next show that the sum of $\s$ and three copies of $\br^{10}$ is large. Indeed, the stationary subgroup of a triple of linear independent elements of $\br^{10}$ is $\Spin(7) \subset \Spin(10)$. Then $\s$ is the sum of four $8$-dimensional irreducible $\Spin(7)$ submodules, and so is large by the argument in \ref{sss:spin7}.

We can therefore assume by Lemma~\ref{l:twocoml} that the decomposition \eqref{eq:nirr} takes either the form $\m=\s_1 \oplus \s_2 \oplus \t$, where $\t$ is a trivial module, or the form $\m=\s \oplus a \br^{10} \oplus \t$, where $\t$ is a trivial module and $1 \le a \le 3$.

In the first case, as $\s$ is of complex type, we find by Lemma~\ref{l:trivial} that $\t$ is isomorphic to either a subalgebra of $\mathfrak{u}(2)$ if $\s_1$ and $\s_2$ lie in the same eigenspace $\m_i$, or of $\mathfrak{u}(1) \oplus \mathfrak{u}(1)$, if they lie in different eigenspaces. The dimension count shows that, in any case, $109 \le \dim \g \le 113$, but there are no simple Lie algebras of such dimensions, a contradiction.

In the second case, if $a=1$, as $\s$ is of complex type and $\br^{10}$ is of real type, we obtain by Lemma~\ref{l:trivial} that $\t$ is isomorphic to a subalgebra of $\mathfrak{u}(1)$. Then $\dim \g \in \{87, 88\}$. Similarly, if $a=3$, we obtain that $\t$ is isomorphic to a subalgebra of $\mathfrak{u}(1) \oplus \so(3)$, and so $107 \le \dim \g \le 111$. But in both cases, there are no simple Lie algebras of such dimensions. For $a=2$, $\t$ is isomorphic to a subalgebra of $\mathfrak{u}(1) \oplus \so(2)$, and from the dimension count, the only candidate for $\g$ is $\su(10)$. But $\Spin(10)$ cannot be a subgroup of $\SU(10)$ as $\Spin(10)$ has no faithful real representation on $\br^{20}$.

So in the case $H=\Spin(10)$, any GO metric is naturally reductive.

\subsubsection{\underline{$\Spin(12)$}}
\label{sss:spin12}
From Table~\ref{t:tiny}, there are only two tiny modules, the $64$-dimensional spin module $\s$ and the $12$-dimensional module $\br^{12}$ for the standard representation of $\SO(12)$. We can assume that $\m$ contains at least one spin module by~\ref{sss:sonstand}, and at least one other nontrivial module by Lemma~\ref{l:trivial}\eqref{it:onenont}. We claim that the sum of the spin module and either of the spin or the standard module is large. Indeed, if the second module is the standard module $\br^{12}$, the stationary subgroup of a nonzero element of it is $\Spin(11) \subset \Spin(12)$. Its representation on $\s$ is still irreducible, with the trivial principal stationary subgroup (no $\Spin(11)$ entry in Table~\ref{t:tiny}). Now suppose the second module is also spin. The spin representation comes from the $s$-representation for the symmetric space $Q=\E_7/\Spin(12) \SU(2)$. The symmetric space $Q$ has rank $4$, with the restricted root system of type $\F_4$; there are $12$ roots of multiplicity $1$ and $12$ roots of multiplicity $4$ \cite[Table~1]{T2}. The stationary subalgebra $\k(0)$ of a regular element of a maximal abelian subalgebra $\ag \subset \q = T_oQ$ is $\so(3) \oplus \so(3) \oplus \so(3)$. By \cite[Lemma~2.25(a)]{N}, for every $4$-dimensional root space $\q(\la)$, the subalgebra spanned by $[\q(\la),\q(\la)]$ is an ideal of $\k(0)$ isomorphic to $\so(4)$, and it acts as the standard representation of $\so(4)$ on $\q(\la)$. Take three $4$-dimensional root spaces $\q(\la)$ such that $[\q(\la), \q(\la)]$ span the same ideal $\so(4) \subset \k(0)$ and choose three generic vectors, $X_1,X_2,X_3$, one in each of them. Then the stationary subalgebra of $X_1+X_2+X_3$ in $\k(0)$ is the ``remaining" ideal $\so(3)$. Now choose a nonzero vector $Y$ in a $4$-dimensional root space $\q(\la)$ such that $[\q(\la),\q(\la)]$ contains that ideal. Then the stationary subalgebra of $X_1+X_2+X_3+Y$ in $\k(0)$ is trivial. Therefore the sum of two copies of $\s$ is a large module. 

We can therefore assume that the decomposition \eqref{eq:nirr} takes either the form $\m=\s_1 \oplus \s_2 \oplus \t$, where $\t$ is a trivial module, or the form $\m=\s \oplus \br^{12} \oplus \t$, where $\t$ is a trivial module.

In the second case, both $\s$ and $\br^{12}$ are $\ad(\t)$-invariant by Lemma~\ref{l:trivial}\eqref{it:actionofVa}. As $\s$ is of quaternionic type and $\br^{12}$ is of real type, $\t$ is a subalgebra of $\sp(1)$, by Lemma~\ref{l:trivial}\eqref{it:noideal},\eqref{it:realS}. It follows that $142 \le \dim \g \le 145$. The only simple Lie algebra $\g$ whose dimension lies in this range is $\su(12)$, but $\Spin(12)$ cannot be a subgroup of $\SU(12)$ as $\Spin(12)$ has no faithful real representation on $\br^{24}$. Similarly, in the first case, $\t$ must be a subalgebra of $\sp(2)$, which gives $194 \le \dim \g \le 204$. The only simple Lie algebra $\g$ whose dimension lies in this range is $\su(15)$, but $\Spin(12)$ is not a subgroup of $\SU(15)$ as $\Spin(12)$ has no faithful real representation on $\br^{30}$.

So in the case $H=\Spin(12)$, any GO metric is naturally reductive.

\subsection{Type A: $\mathbf{H=SU(n), \; n \ge 3}$}
\label{ss:su(n)}

From Table~\ref{t:tiny}, we can have the following tiny $\SU(n)$ modules. For all $n \ge 3$, we have the standard module of dimension $2n$, and for all $n \ge 5$, we have the module $\p$ of dimension $n(n-1)$ coming from the $s$-representation for the symmetric space $Q=\SO(2n)/\mathrm{U}(n)$ (note that for $n=3$ this module is standard, and for $n=4$ it is reducible). In addition, for $n=4$ we have the tiny module of dimension $6$ coming from the standard representation of $\SO(6)=\SU(4)/\mathbb{Z}_2$, and for $n=6$, there is a tiny module $\q$ of dimension $40$ from the $s$-representation for the symmetric space $Q=\E_6/\SU(6) \SU(2)$. 

Before considering various cases we prove the following lemma. Consider the homogeneous space $G/H=\SO(2n+1)/\SU(n)$, where $n \ge 3$ and $H=\SU(n) \subset \mathrm{U}(n) \subset \SO(2n)\subset \SO(2n+1)=G$. At the level of Lie algebras, we have $\h=\su(n) \subset \ug(n) \subset \so(2n) \subset \so(2n+1)=\g$. We have an orthogonal decomposition into $\h$-modules: $\g= \h \oplus \t \oplus \n \oplus \s$, where $\t$ is the one-dimensional trivial module which is the orthogonal complement to $\h$ in $\ug(n)$, the module $\n$ is the orthogonal complement to $\ug(n)$ in $\so(2n)$ and the module $\s$ is the standard $2n$-dimensional module which is the orthogonal complement to $\so(2n)$ in $\so(2n+1)$ (note that the module $\n$ is reducible when $n=4$ and is standard when $n=3$).

\begin{lemma} \label{l:so2n+1sun}
In the above notation, suppose the metric on $\SO(2n+1)/\SU(n), \; n \ge 3$, is defined by a metric automorphism $A$ such that the modules $\s, \n, \t$ lie in the eigenspaces of $A$ with the eigenvalues $\a_1, \a_2, \a_3 > 0$ respectively. Then if $n$ is even, the metric is GO if and only if $\a_2=\a_3$; if $n$ is odd, the metric is GO if and only if $n \alpha_3^{-1} = (n-1) \alpha_2^{-1} + \alpha_1^{-1}$.
\end{lemma}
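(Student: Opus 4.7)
The plan is to apply Lemma~\ref{GO-criterion} directly: writing $X = X_1 + X_2 + X_3$ with $X_1 \in \s$, $X_2 \in \n$, $X_3 \in \t$, we seek $Z \in \su(n)$ with $[X+Z, AX] = 0$. The inclusion chain $\su(n) \subset \ug(n) \subset \so(2n) \subset \so(2n+1)$ implies $[\s, \n], [\s, \t] \subset \s$ and $[\n, \t], [Z,\n] \subset \n$, while $[Z, X_3] = 0$ since $\t$ is the centre of $\ug(n)$. Thus $[X+Z, AX]$ has components only in $\s$ and $\n$, yielding the coupled pair
\begin{gather*}
\a_1 [Z, X_1] + (\a_2 - \a_1)[X_1, X_2] + (\a_3 - \a_1)[X_1, X_3] = 0, \\
\a_2 [Z, X_2] + (\a_3 - \a_2)[X_2, X_3] = 0,
\end{gather*}
to be solved in $Z \in \su(n)$ for every $X$.

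Next I would bring a generic $X_2$ to canonical form. Identifying $\n \simeq \Lambda^2 \bc^n$, the element $X_2$ corresponds to a complex antisymmetric matrix $C$ which, after an $\Ad_H$-translation, we may take in the block-diagonal form $C = \diag(\mu_1 J', \dots, \mu_k J', [0])$, where $k = \lfloor n/2 \rfloor$, the $\mu_j$ are distinct positive reals, $J'$ is the $2 \times 2$ standard symplectic matrix, and the trailing $1 \times 1$ zero block appears exactly when $n = 2k+1$ is odd. Fix a unit $T \in \t$ and write $X_3 = \la T$; then $T$ acts on $\s \simeq \bc^n$ as $c\ic\,\id$ and on $\Lambda^2 \bc^n$ as $2c\ic\,\id$ for some $c > 0$. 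Working block-by-block in this basis, the second equation forces the off-diagonal blocks of $Z$ to vanish (using that the $\mu_j$ are distinct) and constrains each diagonal $2 \times 2$ block of $Z$ to a three-real-parameter family whose block-trace equals $\ic\delta$, where $\delta := 2\la c(\a_3 - \a_2)/\a_2$.

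The decisive step is the trace-free condition on $Z \in \su(n)$. Summing block traces gives $\Tr Z' = \ic k\delta$ for the upper-left $2k \times 2k$ part. For $n = 2k$ even, $\Tr Z = \Tr Z' = 0$ forces $\delta = 0$; since $\la$ is arbitrary, this yields $\a_2 = \a_3$. For $n = 2k+1$ odd, the remaining $(n,n)$-entry of $Z$ must equal $-\ic k\delta$, and substituting this into the $(n,n)$-component of the first equation (which for generic $v$ with $v_n \ne 0$ reads $\a_1 \tau = (\a_3-\a_1)\la c$) gives the scalar identity $-2k\a_1(\a_3 - \a_2)/\a_2 = \a_3 - \a_1$, equivalent after clearing denominators to $n\a_3^{-1} = (n-1)\a_2^{-1} + \a_1^{-1}$.

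To complete the proof I would verify sufficiency in each case. For $n$ even with $\a_2 = \a_3$, equation (II) collapses to $[Z, X_2] = 0$, so $Z$ ranges in the centraliser $\sp(1)^k$ of $X_2$; one checks that the right-hand side of equation (I) in each $\bc^2$-block lies in the image of this $\sp(1)$-action on the corresponding component of $v$, so a solution exists. For $n$ odd under the stated relation, the joint block-by-block system for $Z'$ reduces to four real linear equations in three real unknowns, whose single compatibility condition holds automatically (reflecting the fact that $[Z, X_1]$ is always $\ip$-orthogonal to $X_1$). I expect the main obstacle to be the coupling of the two equations through the shared $Z$; the resolution is the canonical form for $X_2$, which decouples the problem into independent $2 \times 2$ blocks, and it is the trace-free condition on $\su(n)$ that concentrates the asymmetry between the odd and even cases in the scalar $(n,n)$-entry of $Z$.
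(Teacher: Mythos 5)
Your proof is correct and follows essentially the same route as the paper: split the GO equation $[X+Z,AX]=0$ into its $\s$- and $\n$-components, bring a generic $X_2\in\n$ to block-diagonal canonical form, and exploit the trace-free condition on $Z\in\su(n)$, with the $(n,n)$-entry present only when $n$ is odd producing the extra algebraic relation $n\alpha_3^{-1}=(n-1)\alpha_2^{-1}+\alpha_1^{-1}$. The paper solves for the $2\times2$ blocks $F_j$ explicitly and then invokes continuity in the entries of $N$ to pass from generic to arbitrary $X_2$, a step you leave implicit, and the symbol $\tau$ in your penultimate paragraph is used without being defined (it plays the role of the $(n,n)$-entry of $Z$ divided by $\ic$); otherwise the argument is the same.
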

\begin{proof}
We have $[\n, \s]=[\t, \s]=\s$ and $[\t, \n]=\n$. For $X \in \s, \, Y \in \n, \, T \in \t$, the GO condition \eqref{eq:GO} gives $[Z+X+Y+T, \a_1 X + \a_2 Y + \a_3 T]=0$, which is equivalent to
\begin{equation*}
  [Z + \sigma T, Y]=0, \qquad [Z + \rho Y + \tau T, X]=0,
\end{equation*}
where $\sigma = 1-\a_3\a_2^{-1}, \; \rho = 1-\a_2\a_1^{-1}, \; \tau = 1-\a_3\a_1^{-1}$. Choose an almost Hermitian structure in $\s=\bc^n$ in such a way that $(\ad_t)_{|\s_1}$ is a real multiple of the multiplication by $\ic$. Then for $w = X \in \bc^n$ we have $[Z, X]= Mw$, where $M$ is an $n \times n$ skew-Hermitian matrix with trace zero, $[T, X]= \ic t w$, where $t \in \br$, and $[Y, X]= NCw$, where $N$ is an $n \times n$ complex skew-symmetric matrix (depending on $Y$) and $C$ is the componentwise complex conjugation. The above GO condition is then equivalent to the following: for any $w \in \bc^n, \, t \in \br$ and $N \in \so(n, \bc)$, there exists an $n \times n$ traceless, skew-Hermitian matrix $M$ such that
\begin{equation} \label{eq:sungo}
  (M +\sigma t \ic I_n)N + N(M +\sigma t \ic I_n)^t =0, \qquad (M+\rho NC+ \tau t \ic)w=0.
\end{equation}

We consider two cases depending on the parity of $n$.

\smallskip

Suppose $n$ is even; let $n=2k$. Take $N$ in \eqref{eq:sungo} to be generic (that is, $\rk N = n$ and all the eigenvalues of $N$ are pairwise distinct). We can choose a unitary basis for $\bc^n$ relative to which $N=\diag(\mu_1 J, \mu_2 J, \dots, \mu_k J)$, where $J= \left(\begin{smallmatrix} 0 & 1 \\ -1 & 0 \end{smallmatrix}\right)$ and $\pm\mu_1, \pm\mu_2, \dots, \pm\mu_k \in \bc$ are nonzero and pairwise distinct. Then from the first equation of~\eqref{eq:sungo} we obtain $M + \sigma t \ic I_n = \diag(F_1, F_2, \dots, F_k)$, where $F_1, F_2, \dots, F_k \in \su(2)$. It follows that $\Tr (M + \sigma t \ic I_n) = 0$ and so $\sigma = 0$. This gives $\a_2=\a_3$ and $\tau = \rho$.

Now the second equation of~\eqref{eq:sungo} gives
\begin{equation*}
  F_j w^j + \rho \mu_j J \overline{w}^j + \rho t \ic w^j = 0, \qquad j=1, 2, \dots, k,
\end{equation*}
where the coordinates of the vector $w^j = \left(\begin{smallmatrix} w^j_1 \\ w^j_2 \end{smallmatrix}\right) \in \bc^2$ are the $(2j-1)$-st and the $2j$-th coordinates of $w$ respectively. If $w^j=0$, the latter equation is trivially satisfied, with an arbitrary $F_j$. Otherwise, a direct calculation gives $F_j = \left(\begin{smallmatrix} a_j \ic & z_j \\ -\overline{z_j} & -a_j \ic \end{smallmatrix}\right)$, where $a_j \in \br$ and $z_j \in \bc$ are given by
\begin{equation*}
  (|w^j_1|^2 + |w^j_2|^2) \begin{pmatrix} a_j \ic \\ z_j \end{pmatrix} = - \rho t \ic \begin{pmatrix} |w^j_1|^2 - |w^j_2|^2 \\ 2w^j_1 \overline{w}^j_2 \end{pmatrix} + \rho \begin{pmatrix} 2 \ic \Im(\mu_j \overline{w}^j_1\overline{w}^j_2) \\ \mu_j (\overline{w}^j_2)^2+ \overline{\mu_j} (w^j_1)^2 \end{pmatrix} \, .
\end{equation*}
As the right-hand side is continuous in $\mu_j$, the entries of $N$, we deduce that a traceless skew-Hermitian matrix $M$ which satisfies \eqref{eq:sungo} exists for all $N \in \so(n, \bc)$ (and all $t \in \br$ and $w \in \bc^n$). Hence the metric so defined is GO.

\smallskip

Now suppose $n$ is odd; let $n=2k+1$. The proof is similar to that in the even case with some modifications. We again take a generic $N$ and choose a unitary basis for $\bc^n$ relative to which $N=\diag(\mu_1 J, \mu_2 J, \dots, \mu_k J, 0)$, where $J= \left(\begin{smallmatrix} 0 & 1 \\ -1 & 0 \end{smallmatrix}\right)$ and $\pm\mu_1, \pm\mu_2, \dots, \pm\mu_k \in \bc$ are nonzero and pairwise distinct. The first equation of~\eqref{eq:sungo} gives $M + \sigma t \ic I_n = \diag(F_1, F_2, \dots, F_k, c)$, where $F_1, F_2, \dots, F_k \in \su(2)$ and $c \in \bc$. Comparing the traces we find that $c= n \sigma t \ic$ and so $M + \sigma t \ic I_n = \diag(F_1-\sigma t \ic I_2, F_2-\sigma t \ic I_2,$ $\dots, F_k-\sigma t \ic I_2, 2k \sigma t \ic)$.

Then the second equation of~\eqref{eq:sungo} gives
\begin{equation*}
  (F_j + (\tau - \sigma) t \ic I_2) w^j + \rho \mu_j J \overline{w}^j = 0, \qquad j=1, 2, \dots, k, \qquad 2k \sigma + \tau = 0,
\end{equation*}
where the coordinates of the vector $w^j = \left(\begin{smallmatrix} w^j_1 \\ w^j_2 \end{smallmatrix}\right) \in \bc^2$ are the $(2j-1)$-st and the $2j$-th coordinates of $w$ respectively. From the last equation we obtain $n \alpha_3^{-1} = (n-1) \alpha_2^{-1} + \alpha_1^{-1}$. From the first $k$ equations we find, provided $w^j \ne 0$, that the entries of the matrix $F_j = \left(\begin{smallmatrix} a_j \ic & z_j \\ -\overline{z_j} & -a_j \ic \end{smallmatrix}\right) \in \su(2), \; a_j \in \br, \, z_j \in \bc$ are given by
\begin{equation*}
  (|w^j_1|^2 + |w^j_2|^2) \begin{pmatrix} a_j \ic \\ z_j \end{pmatrix} = n \sigma t \ic \begin{pmatrix} |w^j_1|^2 - |w^j_2|^2 \\ 2w^j_1 \overline{w}^j_2 \end{pmatrix} + \rho \begin{pmatrix} 2 \ic \Im(\mu_j \overline{w}^j_1 \overline{w}^j_2) \\ \mu_j (\overline{w}^j_2)^2+ \overline{\mu_j} (w^j_1)^2 \end{pmatrix} \, .
\end{equation*}
Similar to the even case this proves that the metric is GO.
\end{proof}

\subsubsection{\underline{$\SU(n), \; n \ge 4$; all modules standard}}
\label{sss:sunallst}
We first consider the case when $n \ge 4$ and all nontrivial submodules in the decomposition \eqref{eq:nirr} are standard. There has to be no more than $n-1$ of them, as the sum of $n-1$ standard modules is already a large module.

Note that the tensor square of the standard module contains no standard modules, and the exterior square of the standard module is the sum of the adjoint module, the module $\p$ defined above and the one-dimensional trivial module. Suppose we have two standard modules $\s_1 \subset \m_i$ and $\s_2 \subset \m_j, \; i \ne j$. Then by Lemma~\ref{l:brackets}\eqref{it:alphabeta} $[\s_1, \s_2] \subset \s_1 \oplus \s_2$, and hence $\s_1$ and $\s_2$ commute; in particular, $[[\s_1, \s_1],\s_2]=0$. On the other hand, $[\s_1, \s_1]$ is contained in the subalgebra which is the direct sum of $\h$ and a one-dimensional trivial module, and moreover, is an ideal in that subalgebra. As $\s_1$ is nontrivial we obtain $\h \subset [\s_1, \s_1]$ which contradicts the fact that $[[\s_1, \s_1],\s_2]=0$, as $\s_2$ is also nontrivial. It follows that all the standard modules $\s_r$ lie in the same eigenspace, and so up to relabelling we have $\m_1=\oplus_{r=1}^p \s_r \oplus \t_1$ and $\m_i=\t_i$ for $i =2, \dots, m$, where $\t_1, \t_2, \dots, \t_m$ are trivial modules. Note that $2 \le p \le n-1$ (if $p=1$ we obtain the space $\SU(n + 1)/\SU(n)$ from Lemma~\ref{l:trivial}\eqref{it:onenont}) and $m \ge 2$, so that $\t_2 \ne 0$. Moreover, the module $\t_1$ must also be nonzero, for if $\t_1=0$ we obtain that $[\s_1, \s_2]=0$ by Lemma~\ref{l:brackets}\eqref{it:twoinone} and then repeat the argument above. Furthermore, the pair $(\g, \h')$, where $\h'=\h \oplus \t$, is a symmetric pair (as the bracket of standard modules contains no standard modules and hence lies in $\h'$, and $\m_1':=\oplus_{r=1}^p \s_r$ is $\ad(\t)$-invariant, by Lemma~\ref{l:trivial}\eqref{it:actionofVa}). As $\g$ is simple and $\h'$ is the sum of two ideals, $\su(n)$ and $\t$,  with $\dim \t \ge 2$, from the classification \cite{Hel} we find that $(\g, \h') = (\su(n+p), \su(n) \oplus \su(p) \oplus \br)$. From Lemma~\ref{l:trivial}\eqref{it:noideal}, \eqref{it:Vaideals} we obtain that $m=2$ and $\t_1 \oplus \t_2 = \su(p) \oplus \br$. Note that $\t_2 \ne \su(p)$ as $\ad_{\t_2}$ preserves every individual module $\s_r, \; r=1, \dots, p$, by  Lemma~\ref{l:trivial}\eqref{it:actionofVa}; therefore $\t_1=\su(p), \, \t_2 = \br$. It follows that $\m_1=\oplus_{r=1}^p \s_r \oplus \su(p)$ and $\m_2=\br$. The corresponding homogeneous space is $\SU(n+p)/\SU(n), \; 2 \le p \le n-1$, where $\SU(n) \subset \SU(n) \times \SU(p) \subset \mathrm{S}(\mathrm{U}(n) \times \mathrm{U}(p))$; this is the space \eqref{it:susu} in Theorem~\ref{th:simple}\eqref{it:thirred} (with $p \ge 2$ and $n \ge 4$).

By Lemma~\eqref{GO-criterion}, the GO condition is equivalent to the fact that for any $X_r \in \s_r, \, r=1, \dots, p$, and $T_1 \in \t_1, \, T \in \t_2$, there exists $Z \in \h$ such that $[\sum_{r=1}^p X_r+T_1+T+Z$, $\a_1(\sum_{r=1}^p X_r+T_1)+\a_2 T]=0$. Note that $\t_2$ commutes with both $\h$ and $\t_1$, and preserves each of the $\s_r$'s. Therefore the GO condition is equivalent to the existence of $Z \in \h$ such that
\begin{equation}\label{eq:sunstgo}
[Z+ (1-\a_2\a_1^{-1}) T, X_r]=0, \quad \text{for } r=1, \dots, p.
\end{equation}
If $T=0$, we can take $Z=0$. If $T \ne 0$, then identifying the modules $\s_r$ with a single standard module $\n$ via an isomorphism we see that relative to some unitary basis for $\n=\bc^n$, the operator $(\ad_T)_{|\s}$ is real proportional to the multiplication by $\mathrm{i}$, and the action of $\su(n)$ commutes with it. Then \eqref{eq:sunstgo} is equivalent to $[Z, X_r]=\mu \mathrm{i} X_r, \; r=1,\dots, p$, where $\mu \in \br, \, \mu \ne 0$. It is sufficient to show that a required $Z \in \su(n)$ exists when $X_r$ are unitary orthonormal. Extending $\{X_1, \dots, X_p\}$ to a unitary basis $\{X_1, \dots, X_p, \dots, X_n\}$ (recall that $p \le n-1$) we can take $Z \in \su(n)$ such that $(\ad_Z)_{|\s}$ relative to this basis is given by the diagonal matrix whose first $p$ entries are $\mu \mathrm{i}$ and the remaining $n-p$, are $-p(n-p)^{-1}\mu \mathrm{i}$. It follows that the metric is GO.

\smallskip

\subsubsection{\underline{$\SU(n), \; n \ge 5$ and $n \ne 6$}}
\label{sss:sunallstp}
From Table~\ref{t:tiny}, there are two tiny modules, the standard module $\s = \bc^n$ and the $n(n-1)$-dimensional module $\p$, and so in the decomposition \eqref{eq:nirr}, every nontrivial submodule is isomorphic to either $\s$ or $\p$. There are at least two of them, and we can assume that at least one is isomorphic to $\p$ by \ref{sss:sunallst}. Note that both $\p \oplus \p$ and $\p \oplus \s$ are large modules. Indeed, for a generic element of $\p$ we have
the set of $[n/2]$ mutually unitary orthogonal subspaces $\bc^2 \subset \bc^n$ such that the stationary subalgebra is the sum of $[n/2]$ copies of $\su(2)$ each acting in its own $\bc^2$. To see that the principal stationary subalgebra of $\p \oplus \s$ is trivial we take an element in $\s=\bc^n$ all whose components in the subspaces $\bc^2$ are nonzero; for $\p \oplus \p$, we take two generic elements of $\p$ sharing no common subspaces $\bc^2$ in the corresponding sets.

It follows that the only possible cases for the decomposition \eqref{eq:nirr} are $\m = \p \oplus \p \oplus \t$ and $\m = \p \oplus \s \oplus \t$, where $\t$ is a trivial module.

We will need the irreducible decompositions of the tensor products of $\s$ and $\p$:
\begin{equation} \label{eq:sunotimes}
\begin{gathered}
\s \otimes \s = 2 \br \oplus \p \oplus 2 \su(n) \oplus \dots, \quad \text{for } n = 5 \text{ or } n \ne 7, \\
\begin{aligned}
&\p \otimes \p = 2 \br \oplus 2 \su(n) \oplus \dots, & \quad  &\s \otimes \p = \s \oplus \dots, & \quad &\text{for } n \ge 7, \\
&\p \otimes \p = 2 \br \oplus 2 \su(5) \oplus \s \oplus \dots, & \quad & \s \otimes \p = \s \oplus \p \oplus \dots, & \quad &\text{for } n = 5,
\end{aligned}
\end{gathered}
\end{equation}
where $\br$ is the trivial module, $\su(n)$ is the adjoint module and dots denote the sums of irreducible large modules.

We now consider the case $\m = \p \oplus \p \oplus \t$. From Lemma~\ref{l:trivial} we find $\t \subset \ug(2)$. Furthermore, by \eqref{eq:sunotimes}, $\p \otimes \p$ does not contain $\p$, and so $(\g, \h' = \su(n) \oplus \t)$ is a symmetric pair, with the corresponding symmetric space of dimension $2n(n-1)$. From the classification in \cite{Hel} we find that there are no such pairs for $n \ge 5$, and so this case is not possible.

Next consider the case $\m = \p \oplus \s \oplus \t$. From Lemma~\ref{l:trivial} we have $\t \subset \br^2$. First suppose that $n \ge 7$. Then by \eqref{eq:sunotimes}, $(\g, \h' = \su(n) \oplus \p \oplus \t)$ is a symmetric pair, with the corresponding symmetric space of dimension $2n$ and with $2n^2-n-1 \le \dim \h' \le 2n^2-n+1$ and $\rk \h' \ge n-1$. From the classification in \cite{Hel} we find that there is only one such pair: $(\g,\h') = (\so(2n+1), \so(2n))$. Then we get $\dim \t = 1$ and $\su(n) \oplus \p \oplus \t = \so(2n)$ and we obtain a family of GO metrics as in Lemma~\ref{l:so2n+1sun}.

The last case to consider is $\m = \p \oplus \s \oplus \t$ and $n = 5$. We again have $\dim \t \le 2$ and then from the dimension count, $\dim \g \in \{54, 55, 56\}$. Then we get $\dim \t = 1$ and $\g=\sp(5)$ or $\g=\so(11)$. The first case is not possible, as the only way to realise the algebra $\ug(5)=\su(5) \oplus \t$ as a subalgebra of $\sp(5)$ is the one corresponding to the symmetric pair $(\sp(5), \ug(5))$. But as both $\p$ and $\s$ are $\ad(\t)$-invariant, the $30$-dimensional irreducible $\ug(5)$-module which is the tangent space of the corresponding symmetric space remains irreducible for the subalgebra $\su(5)$. In the case $\g=\so(11)$ there is again, the only way to realise $\ug(5)=\su(5) \oplus \t$ as a subalgebra of $\so(11)$: we have $\ug(5) \subset \so(10) \subset \so(11)$, and then this case is completed by application of Lemma~\ref{l:so2n+1sun}.

Thus we obtain the spaces in Theorem~\ref{th:simple}\eqref{it:thirred}\eqref{it:so2su} with $n \ge 7$ and $n=5$.

\subsubsection{\underline{$\SU(3)$}}
\label{sss:su3}
From Table~\ref{t:tiny}, there is only one tiny module, the standard one, of dimension $6$. Note that the sum of two of them is a large module, and so by Lemma~\ref{l:twocoml} and Lemma~\ref{l:trivial}\eqref{it:onenont} we can assume that the decomposition \eqref{eq:nirr} takes the form $\m=\s_1 \oplus \s_2 \oplus \t$, where $\s_1$ and $\s_2$ are standard and $\t$ is a trivial module. As the standard module is of complex type and as there are two of them, from Lemma~\ref{l:trivial} we find that $\t$ is a subalgebra of $\ug(2)$. It follows that $20 \le \dim \g \le 24$. This gives three possibilities: either $\g=\sp(3), \; \dim \t =1$, or $\g=\so(7), \; \dim \t = 1$, or $\g=\su(5), \; \dim \t = 4$.

In the first case, there is only one way, up to conjugation, to realise the algebra $\ug(3)=\h \oplus \t$ as a subalgebra of $\g=\sp(3)$, namely the one corresponding to the symmetric pair $(\sp(3), \ug(3))$. As we have at least two different eigenspaces $\m_i$ of $A$, and $\dim \t =1$, we obtain by Lemma~\ref{l:trivial}\eqref{it:actionofVa} that both $\s_1$ and $\s_2$ are $\ad(\t)$-invariant. But then they are $\ug(3)$-invariant which contradicts the fact that the representation of $\ug(3)$ on the tangent space of the corresponding symmetric space is irreducible.

In the second case, there is again only one way, up to conjugation, to realise the algebra $\ug(3)=\h \oplus \t$ as a subalgebra of $\g=\so(7)$: we have $\ug(3) \subset \so(6) \subset \so(7)$, where the first inclusion corresponds to the symmetric pair $(\so(6), \ug(3))$. We have therefore the following modules in the decomposition \eqref{eq:nirr}: the standard module $\s_1$ which is the orthogonal complement to $\so(6)$ in $\so(7)$, the standard module $\s_2$ which is the orthogonal complement to $\ug(3)$ in $\so(6)$, and the one-dimensional module $\t$, the centre of $\ug(3)$. By Lemma~\ref{l:so2n+1sun} we get GO metrics on the space $\SO(7)/\SU(3)$ from Theorem~\ref{th:simple}\eqref{it:thirred}\eqref{it:so2su} with $n = 3$.

In the third case, from Lemma~\ref{l:trivial} we obtain that $\t$ can have dimension $4$ only if, up to relabelling, the eigenspaces of $A$ are given by $\m_1=\s_1 \oplus \s_2 \oplus \t_1, \; \m_2 = \t_2$, where $\t_1$ is isomorphic to $\su(2)$ and $\dim \t_2 = 1$. Repeating the arguments in the last two paragraphs of \ref{sss:sunallst}, we obtain a family of GO metrics on $\SU(5)/\SU(3)$ (the space \eqref{it:susu} in Theorem~\ref{th:simple}\eqref{it:thirred} with $n = 3$ and $p=2$).

\subsubsection{\underline{$\SU(4)$}}
\label{sss:su4}
From Table~\ref{t:tiny}, there are two tiny modules, the standard module $\s = \bc^4$ and the module $\n=\br^6$ coming from the standard representation of $\SO(6)=\SU(4)/\mathbb{Z}_2$ (note that the orthogonal complement to $\ug(4)$ in $\so(8)$ is reducible and is the direct sum of two copies of $\n$).

The cases when all nontrivial modules in the decomposition \eqref{eq:nirr} are isomorphic to $\n$ or all are standard have already been considered in \ref{sss:sonstand} and \ref{sss:sunallst} respectively. We can therefore assume that at least one module in \eqref{eq:nirr} is standard and at least one is isomorphic to $\n$. Note that the module $2\n \oplus \s$ is large. Indeed, the stationary subgroup of a nonzero element from $\s = \bc^4$ is $\SU(3)$, and the restriction of the representation of $\SU(4)$ on $\n$ to $\SU(3)$ is the standard representation of $\SU(3)$ on $\n=\br^6=\bc^3$. Then the stationary subgroup of a pair of linear independent vectors from $\bc^3$ is trivial. Similarly, the module $\n \oplus 2\s$ is large, as the stationary subgroup of a nonzero element from $\n$ is $\Sp(2)=\Spin(5) \subset \Spin(6)=\SU(4)$, and so $\s$ can be viewed as the standard module for $\Sp(2)$. Then the stationary subgroup of a pair of linear independent vectors from $\s = \bH^2$ is trivial.

It therefore follows that the decomposition \eqref{eq:nirr} takes the form $\m = q \s \oplus r \n \oplus \t$, where $\t$ is a trivial module and $(q,r) = (1,1), (2,1), (1,2)$. Note that $\s$ is of complex type and $\n$ is of real type. Then in the case $(q,r) = (1,1)$, by Lemma~\ref{l:trivial}\eqref{it:actionofVa}, \eqref{it:noideal} and \eqref{it:realS} we obtain $\dim \t \le 1$, and so $\dim \g \in \{29, 30\}$, which is a contradiction as there are no simple groups of these dimensions. Similarly, in the case $(q,r) = (2,1)$ we obtain that $\t$ is a subalgebra of $\ug(2)$, and so $37 \le \dim \g \le 41$, which again leads to a contradiction. In the case $(q,r) = (1,2)$, Lemma~\ref{l:trivial} gives $\dim \t \le 2$, and from the dimension count we obtain one of the following three cases: either $\t=0$ and $\g=\su(6)$, or $\dim \t = 1$ and then $\g=\sp(4)$ or $\g=\so(9)$.

We have the following irreducible decompositions of $\SU(4)$ modules:
\begin{equation} \label{eq:su4otimes}
\s \otimes \s = 2 \br \oplus 2 \n \oplus 2 \su(4) \oplus \dots, \quad \n \otimes \n = \br \oplus \su(4) \oplus \dots, \quad \s \otimes \n = \s \oplus \dots,
\end{equation}
where $\su(4)$ is the adjoint module and dots denote the sums of irreducible large modules. It follows that $\h'=\h \oplus 2 \n \oplus \t$ is a subalgebra of $\g$ and that $(\g, \h')$ is a symmetric pair. The corresponding symmetric space must be of dimension $8$, and from the classification in \cite{Hel} we find that the only possible case is $\g=\so(9), \; \h'=\so(8)$, and then $\s$ is the orthogonal complement to $\so(8)$ is $\so(9)$, $\t$ is the one-dimensional centraliser of $\su(4)$ in $\so(8)$ and $2 \n$ is the orthogonal complement to $\ug(4)=\su(4) \oplus \t$ in $\so(8)$. Denote by $\n_1, \n_2$ the two copies of $\n$. If $\n_1$ and $\n_2$ lie in different eigenspaces $\m_i$, then by Lemma~\ref{l:brackets}\eqref{it:alphabeta} and \eqref{eq:su4otimes} we get $[\n_1, \n_2]=0$, and so $[[\n_1, \n_1], \n_2]=0$ which contradicts the fact that both $\n_1$ and $\n_2$ are irreducible nontrivial modules. Therefore $\n_1 \oplus \n_2$ lie in the same eigenspace $\m_i$ of $A$. By a similar argument using Lemma~\ref{l:brackets}\eqref{it:twoinone} we obtain that $\t \subset \m_i$. It follows that the metric endomorphism $A$ has two eigenspaces, $\m_1=\n_1 \oplus \n_2 \oplus \t$ and $\m_2 = \s$. By Lemma~\ref{l:so2n+1sun}, we get the space $\SO(9)/\SU(4)$ in Theorem~\ref{th:simple}\eqref{it:thirred}\eqref{it:so2su} with $n = 4$.

\subsubsection{\underline{$\SU(6)$}}
\label{sss:su6}
We have three tiny modules, the standard module $\s$ of dimension $12$, the module $\p$ of dimension $30$ and the module $\q$ of dimension $40$ which comes from the $s$-representation for the symmetric space $Q=\E_6/\SU(6) \SU(2)$. As above, we can assume that in the decomposition~\eqref{eq:nirr} we have at least two nontrivial modules and that at least one of them is not standard. We claim that the sum of any two modules each of which is isomorphic to $\s, \p$ or $\q$ and at least one is not standard is a large module. For the modules $\p \oplus \p$ and $\p \oplus \s$ the argument is similar to that in the first paragraph of \ref{sss:sunallstp}. In the remaining cases, one of the modules is $\q$ with the stationary subalgebra $\br^2$ (see Table~\ref{t:tiny}) which is a subalgebra of a Cartan subalgebra of $\su(6)$. It follows that the sum of $\q$ and any nontrivial module is large, by the argument in the proof of Lemma~\ref{l:adj}.

Therefore the decomposition~\eqref{eq:nirr} takes the form $\m = \n_1 \oplus \n_2 \oplus \t$, where $\n_1, \n_2 \in \{\s, \p, \q\}$ and at least one of them is not standard, and $\t$ is trivial. Note that $\s$ and $\p$ are of complex type and $\q$ of quaternionic type. We consider all possible cases.

Suppose $\m = 2\q \oplus \t$. Then by Lemma~\ref{l:trivial} we have $\t \subset \sp(2)$, and by the dimension count we get $\dim \g = 115 + \dim \t$, so that $115 \le \dim \g \le 125$. The only two simple algebras of such dimensions are $\su(11)$ and $\so(16)$, both of dimension $120$. But $\sp(2)$ contains no subalgebras of dimension $5$, so this case is not possible.

Suppose $\m = \p \oplus \q \oplus \t$. Then $\t \subset \sp(1) \oplus \br$, and $\dim \g = 105 + \dim \t$, so that $105 \le \dim \g \le 109$. The only two simple algebras of such dimensions are $\sp(7)$ and $\so(15)$, both of dimension $105$. But then $\t=0$ and so $\m = \p \oplus \q$ which implies that any GO metric (if one at all exists) is normal, by \cite[Theorem~2]{CN2019}.

Suppose $\m = \s \oplus \q \oplus \t$. Then again $\t \subset \sp(1) \oplus \br$, and so $\dim \g = 87 + \dim \t$ which gives the only candidate $\so(14)$. Then $\t = \sp(1) \oplus \br$. But the only faithful representation of $\su(6)$ of dimension at most $14$ is the standard representation on $\br^{12}$. It follows that $\su(6) \subset \so(12) \subset \so(14)$, and so the centraliser of $\su(6)$ in $\so(14)$ is $\br^2$ (cannot be as large as $\t=\sp(1) \oplus \br$). This contradiction shows that this case is also not possible.

Next suppose $\m=2\p \oplus \t$. By Lemma~\ref{l:trivial} we have $\t \subset \ug(2)$, and by the dimension count, the only possible case is $\g=\su(10), \; \t = \ug(2)$. But the only realisation of $\su(6)$ as a subalgebra of $\su(10)$ comes from the inclusion $\bc^6 \subset \bc^{10}$, and then the centraliser of $\su(6)$ is $\ug(4)$ which is much bigger than $\t$. So this case is also not possible.

Finally let $\m=\p \oplus \s \oplus \t$. Then $\t \subset \br^2$ and the dimension count gives $\dim \t =1$ and $\dim \g = 78$, so that $\g$ is either $\eg_6$ or $\sp(6)$ or $\so(13)$. But the case $\g=\eg_6$ is not possible, as the only $\su(6)$ subalgebra in $\eg_6$ is the one coming from the symmetric pair $(\eg_6, \su(6) \oplus \su(2))$, and its centraliser contains $\su(2)$ in contradiction with $\dim \t =1$. Furthermore, if $\g=\sp(6)$, then the only $\su(6)$ subalgebra is the one coming from the symmetric pair $(\sp(6), \ug(6))$. But then, as both $\p$ and $\s$ are $\ad(\t)$-invariant, they are also $\ug(6)$-invariant which contradicts the fact that the representation of $\ug(6)$ on the tangent space of the corresponding symmetric space is irreducible.
The last remaining case is $\g=\so(13)$. The only way to realise $\ug(6)=\su(6) \oplus \t$ as a subalgebra of $\so(13)$ is $\ug(6) \subset \so(12) \subset \so(13)$, where $(\so(12), \ug(6))$ is a symmetric pair. By Lemma~\ref{l:so2n+1sun} we obtain the space $\SO(13)/\SU(6)$ in Theorem~\ref{th:simple}\eqref{it:thirred}\eqref{it:so2su} with $n = 6$.

\subsection{Type C: $\mathbf{H=Sp(n), \; n \ge 2}$} 
\label{ss:sp(n)}

From Table~\ref{t:tiny}, we can have two tiny $\Sp(n)$ modules: the standard module $\s$ of dimension $4n$ and the module $\p$ of dimension $(n-1)(2n+1)$ obtained from the $s$-representation for the symmetric space $Q=\SU(2n)/\Sp(n)$ (note that for $n=2$, the module $\p$ comes from the standard representation of $\SO(5)=\Sp(2)/\mathbb{Z}_2$ on $\br^5$).

We first consider the case when all the nontrivial modules in decomposition \eqref{eq:nirr} are standard. The sum of $n$ such modules is a large module, and so we can assume that $\m = \oplus_{i=1}^p \s_i \oplus \t$, where $\s_i$ are standard modules, $\t$ is trivial and $2 \le p \le n$. As the tensor square of the standard module contains no standard modules, we find that $(\g, \h'=\h \oplus \t)$ is a symmetric pair, with the corresponding symmetric space of dimension $4pn$, and with $\t$ being a nonzero ideal in $\h'$ (for if $\t=0$, the eigenspaces $\m_i$ are the sums of the standard modules and hence pairwise commute by Lemma~\ref{l:brackets}\eqref{it:alphabeta}; this gives a naturally reductive metric: we can take $Z=0$ in \eqref{eq:GO}). From the classification in \cite{Hel}, there are only two cases. In the first one, we have $n=2$ and $(\g, \h')= (\so(5+q), \so(5) \oplus \so(q))$; but then $8p=5q$, from the dimension count, and so $p \ge 5$, a contradiction. In the second case, $n \ge 2$ is arbitrary and $(\g, \h')= (\sp(n+q), \sp(n) \oplus \sp(q))$. It follows that $q=p$ and that $\t=\sp(p)$, and so by Lemma~\ref{l:trivial}\eqref{it:Vaideals}, $\t$ entirely lies in a single eigenspace $\m_i$. But then from Lemma~\ref{l:trivial}\eqref{it:actionofVa}, we can have $\t$ so big only when all the standard modules $\s_1, \dots, \s_p$ also lie in the same eigenspace which implies that $A$ is the multiple of the identity and so the GO metric is normal (note that this argument does not work when $p=1$, in which case we obtain the space $\Sp(n+1)/\Sp(n)$ from Lemma~\ref{l:trivial}\eqref{it:onenont} which carries a GO metric which is not naturally reductive).

We can therefore assume that the decomposition \eqref{eq:nirr} contains the module $\p$ and at least one other nontrivial module. Note that $\p \oplus \s$ is a large module. Indeed, the stationary subalgebra of a generic element of $\p$ is $n \, \sp(1)$, which acts on $\s=\bH^n$ by acting as $\sp(1)$ on $n$ orthogonal copies of $\bH$; taking an element of $\s$ whose component in each of these copies is nonzero we find that the stationary subalgebra is trivial. Assume that all nontrivial modules in \eqref{eq:nirr} are isomorphic to $\p$. If $n=2$, we have the standard representation of $\so(5)$ on $\br^5$, and so the sum of four copies of $\p$ is large, but the sum of three is still small. If $n \ge 3$, the sum of two copies of $\p$ is already large. To see this, we identify the module $\p$ with the tangent space $\q$ of the symmetric space $Q=\SU(2n)/\Sp(n)$ and choose a generic element $X \in \q$. Its centraliser in $\q$ is a maximal abelian subalgebra $\ag \subset \q$ of dimension $n-1$ which then defines the decomposition of $\q \cap \ag^\perp$ into the orthogonal sum of $\frac12 n(n-1)$ root spaces $\q(\la)$ of dimension $4$, with the root system of type $A_{n-1}$ \cite[Table~1]{T2}. The stationary subalgebra of $X$ is $\k(0) = n \, \sp(1) = \oplus_{i=1}^{n} (\sp(1))_i$, with every root space $\q(\la)$ being $\k(0)$-invariant, and moreover, for the root $\la=\pm(\ve_i-\ve_j), \; 1 \le i < j \le n$, the subalgebra $(\sp(1))_i \oplus (\sp(1))_j \subset \k(0)$ acts on $\q(\la)$ as the standard representation of $\so(4)$, and all the other $(\sp(1))_k, \, k \ne i, j$, act on $\q(\la)$ trivially \cite[Lemma~2.25]{N}. It follows that if we choose $Y \in \q$ whose component in each of the root spaces $\q(\la)$ is nonzero, its stationary subalgebra in $\k(0)$ will be trivial.

Suppose all the nontrivial modules in the decomposition \eqref{eq:nirr} are isomorphic to $\p$. If $n=2$ we have the standard representation of $\so(5)$ on $\br^5$; this case has been analysed in~\ref{sss:sonstand}. If $n \ge 3$ then from the above, $\m$ contains exactly two copies of $\p$. As $\p$ is of real type, no module $\m_i$ can be trivial by Lemma~\ref{l:trivial}\eqref{it:noideal}, \eqref{it:realS} and \eqref{it:actionofVa}, and so the two copies of $\p$ must lie in different eigenspaces $\m_i$, and moreover, neither of these eigenspaces contains a trivial submodule. It follows that the only possibility is $\m =\m_1 \oplus \m_2$, with $\m_1=\p_1, \, \m_2 = \p_2$, where $\p_1$ and $\p_2$ are isomorphic to $\p$. But then from \cite[Theorem~2]{CN2019} we find that any GO metric is normal, hence naturally reductive.

The last remaining case to consider is when the decomposition~\eqref{eq:nirr} has the form $\m = \p \oplus \s \oplus \t$, where $\t$ is a trivial module. Note that $\p$ is of real type and so by Lemma~\ref{l:trivial} we have $[\t, \p]=0, \; [\t, \s] \subset \s$ and $\t \subset \sp(1)$. From \cite[Theorem~2]{CN2019} we also find that $\t \ne 0$. Moreover, the irreducible decomposition of the tensor product $\p \otimes \s$ does not contain $\p$, and so $[\p, \s] \subset \s$ and $[\p, \p] \perp \s$. Also, the irreducible decomposition of the tensor product $\s \otimes \s$ does not contain $\s$. It follows that $(\g, \h'=\h \oplus \p \oplus \t)$ is a symmetric pair, with $\t$ a nonzero ideal in $\h'$. The dimension of the corresponding symmetric space is $4n$, and we also know that $\dim \h' \in \{4n^2, 4n^2+2\}, \; \rk \h' \ge n+1$ and that $\h'$ contains a nonzero ideal $\t \subset \sp(1)$. From the classification in \cite{Hel} we find that the only possibility is $(\g,\h')=(\su(2n+1), \su(2n) \oplus \br)$. Then $\s$ is the orthogonal complement to $\su(2n)$ in $\su(2n+1)$ and $\p$ is the orthogonal complement to $\h$ in $\su(2n)$ (note that the only way, up to automorphism, to realise $\sp(n)$ as a subalgebra of $\su(2n)$ is the one corresponding to the symmetric pair $(\su(2n), \sp(n))$).

We now consider the GO condition. Suppose $\s, \p$ and $\t$ lie in the eigenspaces of $A$ with the eigenvalues $\a_1, \a_2$ and $\a_3$ respectively (some of them, but not all three, may be equal). Then \eqref{eq:GO} is equivalent to the fact that for any $X \in \s, \, Y \in \p$ and $T \in \t$, there exists $Z \in \h$ such that
\begin{equation} \label{eq:spngo}
  [Z, Y] = 0, \qquad [Z + \rho Y + \tau T, X]=0,
\end{equation}
where $\rho= 1-\a_2 \a_1^{-1}, \, \tau = 1 - \a_3 \a_1^{-1}$. We can now identify $\s$ with $\bc^{2n}$ in such a way that the action of $\ad_\t$ is the multiplication by a real multiple of $\ic$ and choose a unitary basis in such a way that $\su(2n)$ is the space of complex matrices of the form $\left(\begin{smallmatrix} K_1 & L \\ -L^* & K_2 \end{smallmatrix}\right)$, where $K_1$ and $K_2$ are skew-Hermitian with $\Tr K_1 + \Tr K_2 =0$ and $L$ is an arbitrary $n \times n$ complex matrix, and $\sp(n) \subset \su(2n)$ is the space of complex matrices of the form $\left(\begin{smallmatrix} K & S \\ -\overline{S} & \overline{K} \end{smallmatrix}\right)$, where $K$ is skew-Hermitian and $S$ is an $n \times n$ symmetric complex matrix. Then $\p$ is the space of matrices of the form $\left(\begin{smallmatrix} K & L \\ \overline{L} & -\overline{K} \end{smallmatrix}\right)$, where $K$ is skew-Hermitian with $\Tr K = 0$ and $L \in \so(n, \bc)$. Choose a generic $Y \in \p$. We can specify the basis further (conjugate by an element of $\Sp(n)$) in such a way that the maximal abelian subalgebra of $\p$ containing $Y$ is the space $\left(\begin{smallmatrix} \ic D & 0 \\ 0 & \ic D \end{smallmatrix}\right)$ where $D$ is a real diagonal matrix with $\Tr D = 0$. Taking $Y$ of this form with $D=\diag(d_1, \dots, d_n)$ such that $d_i \in \br$ are nonzero and pairwise distinct we obtain from the first equation of \eqref{eq:spngo} that $Z=\left(\begin{smallmatrix} K & S \\ -\overline{S} & \overline{K} \end{smallmatrix}\right)$, where $K=\diag(\ic x_1, \dots, \ic x_n), \; S = \diag(z_1, \dots, z_n), \; x_j \in \br, \, z_j \in \bc$. We also have $\ad_T= \ic t I_{2n}, \; t \in \br$. Then the second equation of \eqref{eq:spngo} splits into $n$ pairs of equations of the form
\begin{equation*}
  \ic (x_j + \rho d_j + \tau t)w_j + z_j w_{n+j}=0, \qquad -\overline{z_j} w_j + \ic (-x_j + \rho d_j + \tau t) w_{n+j}=0, \qquad j=1, \dots, n,
\end{equation*}
where $w_j, w_{n+j} \in \bc$ are the corresponding coordinates of $X \in \s$ relative to the chosen basis. Now if $w_j = w_{n+j} = 0$ we can choose $x_j$ and $z_j$ arbitrarily. If not, a direct calculation gives $x_j=(|w_{n+j}|^2-|w_j|^2)(|w_{n+j}|^2+|w_j|^2)^{-1} (\rho d_j + \tau t), \; z_j = -2 \ic w_j \overline{w_{n+j}} (|w_{n+j}|^2+|w_j|^2)^{-1} (\rho d_j + \tau t)$. As these expressions are continuous in $d_j$, the equations \eqref{eq:spngo} have a solution $Z \in \h$ for all $Y \in \p, \, X \in \s$ and $T \in \t$. Hence the metric so defined is GO which gives the space $\SU(2n+1)/\Sp(n), \; n \ge 2$, in Theorem~\ref{th:simple}\eqref{it:thirred}\eqref{it:susp}.

\subsection{Exceptional groups}
\label{ss:ex}

\subsubsection{\underline{$\G_2$}} 
\label{sss:g2}
From Table~\ref{t:tiny}, there is only one tiny module, the space of imaginary octonions $\Oc':=\Oc \cap 1^\perp$ which is the space of the defining representation for $\G_2$, and so the decomposition \eqref{eq:nirr} takes the form $\m = q \Oc' \oplus \t$, where $\t$ is a trivial module. As any three non-associating imaginary octonions generate the whole algebra of octonions $\Oc$, the sum of three copies of $\Oc'$ is a large module (one can check that the sum of two is still a small module, with the principal stationary subgroup $\Sp(1)$), and so by Lemma~\ref{l:twocoml} and Lemma~\ref{l:trivial}\eqref{it:onenont} we have $q \in \{2, 3\}$. As the module $\Oc'$ is of real type, Lemma~\ref{l:trivial}\eqref{it:actionofVa} and \eqref{it:noideal} imply that no $\m_i$ can be trivial (and hence not all the modules $\Oc'$ lie in a single $\m_i$) and that $\m_i$ may contain a nonzero trivial submodule only if it also contains at least two copies of $\Oc'$. If $q=2$ these conditions imply that $\m=\m_1 \oplus \m_2$, where both $\m_1$ and $\m_2$ are isomorphic to $\Oc'$, so that $\m$ is the sum of two irreducible submodules. As $\g$ is simple, by \cite[Theorem~2]{CN2019} we obtain $M= \Spin(8)/\G_2$, the space \eqref{it:8g2} in Theorem~\ref{th:simple}\eqref{it:thirred}.

Suppose $q=3$. If the three modules $\Oc'$ lie in three different eigenspaces of $A$, then similar to the above, we get $\m=3\Oc'$. Then $\dim \g = 35$, and so $\g=\su(6)$. But $\g_2$ is not a subalgebra of $\su(6)$ (as the complexification of $\g_2$ has no nontrivial representation on $\bc^6$), a contradiction. So up to relabelling we have $\m=\m_1 \oplus \m_2$, where $\m_1 = \n_1 \oplus \n_2 \oplus \t_1, \; \m_2 = \n_3$, where the modules $\n_1, \n_2$ and $\n_3$ are isomorphic to $\Oc'$ and $\t_1$ is a trivial module. If $\t_1=0$, we get a contradiction by the dimension count as above. Furthermore, by Lemma~\ref{l:trivial}\eqref{it:actionofVa} we get $[\t_1, \n_3]=0$, and so by Lemma~\ref{l:trivial}\eqref{it:actionofVa} and \eqref{it:noideal}, for every nonzero $T \in \t_1$, the restriction of $\ad_T$ to $\n_1 \oplus \n_2$ is nonzero and commutes with $\ad_\h$, and so we obtain that $\t_1 = \br T$, where $(\ad_T)_{\n_1 \oplus \n_2}$ is given by the matrix $\left(\begin{smallmatrix} 0 & I_7 \\ -I_7 & 0 \end{smallmatrix}\right)$, relative to bases for $\n_1, \n_2$ which correspond via isomorphism. Then $\g$ is a simple algebra of dimension $36$ and $\h'=\h \oplus \t_1$ is its subalgebra; moreover, the modules $\n_1 \oplus \n_2$ and $\n_3$ are irreducible for $\h'$ and the metric on $(\n_1 \oplus \n_2) \oplus \n_3$ obtained by the restriction of $A$ is GO. Then the pair $(\g, \h')$ corresponds to \cite[Theorem~2(2)]{CN2019}, and so we get the space $M=\SO(9)/\G_2$ in Theorem~\ref{th:simple}\eqref{it:thirred}\eqref{it:9g2}. To see that the metric is GO we note that $[\t_1, \n_1]=\n_2$ and $[\t_1, \n_2]=\n_1$. By Lemma~\ref{l:brackets}\eqref{it:alphabeta} we have $[\n_3, \n_i] \subset \n_3 \oplus \n_i$ for $i=1,2$, and so acting on both sides by $\t_1$ we get $[\n_3, \n_i] \subset \n_i$ for $i=1,2$, as $[\t_1, \n_3]=0$. Now for $X=N_1+N_2+T'+N_3$, where $N_i \in \n_i$ for $i=1,2,3$ and $T' \in \t_1$, equation~\eqref{eq:GO} gives $\a_1 [Z, N_1] + \a_1 [Z, N_2] + \a_2 [Z, N_3] + (\a_1-\a_2) [N_3, N_1] + (\a_1-\a_2) [N_3, N_2]=0$ and so projecting to the modules $\n_i, \, i=1,2,3$, we get
\begin{equation} \label{eq:so9g2}
  [Z,N_3]=0, \qquad [Z, N_i] = (\a_1^{-1}\a_2-1) [N_3, N_i], \; \text{for } i=1,2.
\end{equation}
If $N_3=0$ we can take $Z=0$. Suppose $N_3 \ne 0$. Identify all three modules $\n_1, \n_2, \n_3$ with a copy of $\Oc'$ via isomorphisms and denote $x_i, \, i=1,2,3$, the image of $N_i$ under this identification. The stationary subalgebra of $x_3$ is $\su(3)$ acting by the standard representation on $\bc^3 = \Oc' \cap x_3^\perp$ (see Table~\ref{t:tiny}; the stationary subalgebras of all nonzero elements are principal). Moreover, the action of $\ad_{N_3}$ on $\Oc'$ commutes with the action of $\su(3)$ and so it is trivial on $x_3$ and is a multiplication by $\mu \ic$ on $\bc^3$ for some $\mu \in \br$. To satisfy equation~\eqref{eq:so9g2} it is sufficient, for any $x_1, x_2 \in \bc^3$, to find $M \in \su(3)$ such that $Mx_1 = \rho \ic x_1, \; Mx_2 = \rho \ic x_2$, where $\rho=\mu(\a_1^{-1}\a_2-1) \in \br$. We can assume that $x_1$ and $x_2$ are be complex orthonormal. Then, relative to a unitary basis $\{x_1,x_2,y\}$ for $\bc^3$ we can take $M=\diag(\rho \ic, \rho \ic, -2\rho \ic\}$.


\subsubsection{\underline{$\F_4$}}
\label{sss:f4}
From Table~\ref{t:tiny}, there is only one tiny module, of dimension $26$, which comes from the $s$-representation for the symmetric space $Q=\E_6/\F_4$. The sum of three such modules is a large module (the sum of two is still small, by the dimension count $52=\dim \F_4 < 2 \dim \Spin(8) = 56$). To see that, we note that the module $\n$ of dimension $26$ can be viewed as the space of $3 \times 3$ traceless, octonion, Hermitian matrices. The algebra $\h=\mathfrak{f}_4$ can be represented as the direct sum of two subspaces, the Lie subalgebra $\mathfrak{g}_2$ whose elements act on a matrix $N \in \n$ componentwise, and the subspace $\so(3, \Oc)$ of the $3 \times 3$ traceless, octonion, anti-Hermitian matrices whose elements act on a matrix $N \in \n$ as a matrix commutator \cite[Theorem~5]{Baez}. Now take $N_1 \in \n$ to be diagonal (then it is real), with pairwise non-equal entries. The corresponding stationary subalgebra of $\h$ is the direct sum of $\mathfrak{g}_2$ and the subspace $\{\diag(a_1, a_2, a_3) \, | \, a_1, a_2, a_3 \in \Oc', \, a_1+a_2+a_3=0\} \subset \so(3, \Oc)$ (note that the stationary subalgebra is $\so(8)$, as per Table~\ref{t:tiny}). Take $N_2, N_3 \in \n$ defined by
\begin{equation*}
  N_2 = \left(
          \begin{array}{ccc}
            0 & 1 & 1 \\
            1 & 0 & 1 \\
            1 & 1 & 0 \\
          \end{array}
        \right), \qquad
  N_3 = \left(
          \begin{array}{ccc}
            0 & x & y \\
            x^* & 0 & z \\
            y^* & z^* & 0 \\
          \end{array}
        \right),
\end{equation*}
where the octonions $x, y, z$ are non-associating. Note that the action of $\mathfrak{g}_2$ on $N_2$ is trivial as all the entries of $N_2$ are real, and that a matrix $Q= \diag(a_1, a_2, a_3)$ with $a_1, a_2, a_3 \in \Oc', \, a_1+a_2+a_3=0$, commutes with $N_2$ only when $Q = 0$. It follows that the stationary subalgebra of the pair of matrices $(N_1, N_2)$ is the subalgebra $\mathfrak{g}_2 \subset \h$ acting on the matrices from $\n$ componentwise. But as the entries $x, y, z$ of the matrix $N_3$ generate the whole algebra of octonions $\Oc$, the only element of $\mathfrak{g}_2$ which maps all of them to zero is zero. It follows that the stationary subalgebra of the triple $(N_1, N_2, N_3)$ is trivial.

Now by the arguments similar to those in \eqref{sss:g2} we get that the decomposition \eqref{eq:nirr} takes the form $\m = q \n \oplus \t$, where $\t$ is a trivial module and $q = 2, 3$. As the module $\n$ is of real type, Lemma~\ref{l:trivial}\eqref{it:actionofVa} and \eqref{it:noideal} imply that no $\m_i$ can be trivial and that $\m_i$ may contain a nonzero trivial submodule only if it also contains at least two copies of $\n$. Then for $q=2$, the only possibility is $\m=\m_1 \oplus \m_2$ and $\m_1=\n_1, \, \m_2 = \n_2$, where $\n_1, \n_2$ are isomorphic to $\n$, which is not possible by \cite[Theorem~2]{CN2019}. Suppose $q=3$. If the modules $\n_1, \n_2, \n_3$ isomorphic to $\n$ lie in three different eigenspaces of $A$, then $\m=\n_1 \oplus \n_2 \oplus \n_3$, and so $\dim \g = 130$, a contradiction, as there is no simple algebra of this dimension. Otherwise, up to relabelling, we have $\m=\m_1 \oplus \m_2$, where $\m_1 = \n_1 \oplus \n_2 \oplus \t_1, \; \m_2 = \n_3$, where $\t_1$ is a trivial module. As $\n$ is of real type, by Lemma~\ref{l:trivial}\eqref{it:actionofVa} we get $[\t_1, \n_3]=0$, and so by Lemma~\ref{l:trivial}\eqref{it:actionofVa} and \eqref{it:noideal} $\t_1 \subset \so(2)$ acting on $\n_1 \oplus \n_2$. This is a contradiction, as there is no simple Lie algebra whose dimension lies in $\{130, 131\}$. It follows that in the case $H=\F_4$, any GO metric is naturally reductive.

\subsubsection{\underline{$\E_6$}}
\label{sss:e6}
From Table~\ref{t:tiny}, there is only one tiny module $\n$, of dimension $54$, which comes from the $s$-representation for the symmetric space $Q=\E_7/\E_6 \SO(2)$. The sum of three such modules is a large module. Indeed, the symmetric space $Q$ has rank $3$, with the restricted root system of type $\mathrm{C}_3$; there are three roots of multiplicity $1$ and six roots of multiplicity $8$ \cite[Table~1]{T2}. The stationary subalgebra of a regular element of a maximal abelian subalgebra $\ag \subset \q=T_oQ$ is $\so(8) \subset \eg_6$, and by \cite[Lemma~2.25(a)]{N}, it acts as the standard representation of $\so(8)$ on each of the six $8$-dimensional root spaces (and acts trivially on the three $1$-dimensional root spaces). If we choose a generic $6$-tuple of vectors, one in each of the six $8$-dimensional root spaces, the stationary subalgebra of their sum will be $\so(2) \subset \so(8) \subset \eg_6$, but if we choose two such $6$-tuples, the stationary subalgebra will be trivial. We now argue as in the previous cases. The decomposition \eqref{eq:nirr} takes the form $\m= q \n \oplus \t$, where $\t$ is a trivial module and $\q \in \{2, 3\}$. In the case $q=2$, as $\n$ is of complex type, the centraliser of $\ad_\h$ in $\so(\n_1 \oplus \n_2)$ (where $\n_1, \n_2$ are isomorphic copies of $\n$) is $\ug(2)$, and so by Lemma~\ref{l:trivial}\eqref{it:actionofVa}, \eqref{it:noideal} and \eqref{it:realS}, the maximal dimension of $\t$ is $4$. Then $186 \le \dim \g \le 190$. The only simple Lie algebra $\g$ whose dimension satisfies this inequality is $\so(20)$, but $\eg_6$ is not a subalgebra of $\so(20)$ as it has no nontrivial real representation on $\br^{20}$. Suppose $q=3$. If all three submodules $\n_1, \n_2$ and $\n_3$ isomorphic to $\n$ lie in different $\m_i$, then by Lemma~\ref{l:trivial}\eqref{it:actionofVa}, each of them is $\ad(\t)$-invariant, and so from Lemma~\ref{l:trivial}\eqref{it:noideal} and \eqref{it:realS}, we obtain $\dim \t \le 3$. If (up to relabelling) $\m_1 \supset \n_1 \oplus \n_2$ and $\m_2 \supset \n_3$ we get $\ad_T \in \ug(2) \oplus \ug(1)$, for all $T \in \t$ and so $\dim \t \le 5$. Finally, if $\m_1 = \n_1 \oplus \n_2 \oplus \n_3 \oplus \t_1$, where $\t_1$ is trivial, then all the other modules $\m_i, \; 1 < i \le m$, are trivial. By \cite[Theorem~2]{CN2019} we can assume that $m \ge 3$, and then by Lemma~\ref{l:trivial}, $\t=\oplus_{i=1}^m \t_i$ is a subalgebra of $\ug(3)$, with $\t_i$ being commuting ideals and $\t_2, \t_3 \ne 0$. It follows that $\dim \t \le 5$. Therefore in all three cases, we have $240 \le \dim \g \le 245$, but there is no simple algebra whose dimension satisfies this inequality. It follows that in the case $H=\E_6$, any GO metric is naturally reductive.

\subsubsection{\underline{$\E_7$}}
\label{sss:e7}
From Table~\ref{t:tiny}, there is only one tiny module $\n$, of dimension $112$, which comes from the $s$-representation for the symmetric space $Q=\E_8/\E_7 \SU(2)$. We show that the sum of two such modules is a large module. The symmetric space $Q$ has rank $4$, with the restricted root system of type $\F_4$; there are $12$ roots of multiplicity $1$ and $12$ roots of multiplicity $8$ \cite[Table~1]{T2}. The stationary subalgebra of a regular element of a maximal abelian subalgebra $\ag \subset \q = T_oQ$ is $\so(8) \subset \eg_7$, and by \cite[Lemma~2.25(a)]{N}, it acts as the standard representation of $\so(8)$ on each of the twelve $8$-dimensional root spaces. If we choose a generic $12$-tuple of vectors, one in each of the twelve $8$-dimensional root spaces, the stationary subalgebra will be trivial. We can therefore assume that the decomposition \eqref{eq:nirr} takes the form $\m=2\n \oplus \t$, where $\t$ is a trivial module. But now from Lemma~\ref{l:trivial} we get $\t \subset \sp(2)$. It follows that $357 \le \dim \g \le 367$. The only simple Lie algebra $\g$ whose dimension lies in this range is $\su(19)$, but $\eg_7$ cannot be its subalgebra as $\eg_7$ has no nontrivial real representation on $\br^{38}$. So in the case $H=\E_7$, any GO metric must be naturally reductive.

This completes the proof of Theorem~\ref{th:simple}.

\end{document}